\newtheorem{thm}{Theorem}[section]
\newtheorem{dfn}[thm]{Definition}
\newtheorem{prp}[thm]{Proposition}
\title{A note on unfolding manifolds of meromorphic connections on the Riemann sphere with unramified singularities\\
\textit{{\normalsize{Dedicated to Prof. Yoshitsugu Takei on the occasion of his 60th birthday}}}}          
\date{}
\author{Kazuki Hiroe\footnote{
	The author is supported byJSPS KAKENHI Grant Number 20K03648.}\\
Department of Mathematics and Informatics, Chiba University\\
1-33, Yayoi-cho, Inage-ku, Chiba-shi, Chiba, 263-8522 JAPAN\\
email: {\tt kazuki@math.s.chiba-u.ac.jp}
}
\begin{document}
\maketitle
\begin{abstract}      
	This note explains a construction of a Poisson manifold
whose symplectic foliation describes a deformation of 
a moduli space of meromorphic connections with unramified irregular singularities.
In particular, this deformation of the moduli space corresponds 
to the unfolding of irregular singularities of the meromorphic connections.
This is an announcement of some results in the forthcoming paper.
	\end{abstract}
	\tableofcontents      
	
	\section*{Introduction}
	This note is an announcement of some results in the forthcoming paper \cite{H}.
	Detailed descriptions of the statements without proofs in this note shall be 
	given in \cite{H}. 
	
	For linear ordinary equations on complex domains, 
	the confluence of singular points 
	is a classical tools to investigate irregular singularities. 
	For example, it is well-known that each differential equation of 
	Airy, Hermite-Weber, and Kummer confluent hypergeometric functions 
	is obtained from the differential equation
	\[
		z(1-z)\frac{d^{2}}{dz^{2}}w+(c-(a+b+1)z)\frac{d}{dz}w-abw=0	
	\] 
	of the Gauss hypergeometric function by the confluence of singular points 
	$z=0,1,\infty$, and further, 
	analytic properties of these functions can be related to that 
	of the Gauss hypergeometric function.
	Similarly for the Heun differential equation which has an accessory parameter and 
	is known as a generalization of 
	Gauss hypergeometric differentia equations, there is a well-known  
	family of differential equations obtained by the confluence of singular points,
	confluent Heun, bi-confluent Heun, tri-confluent Heun, and doubly-confluent Heun 
	equations.  
	Furthermore, for the study of higher dimensional Painlev\'e equations,
	Kawakami-Nakamura-Sakai constructed many confluent families of 
	differential equations with 4 accessory parameters in \cite{HKNS}.
	
	We may notice that these confluence families of differential equations 
	are considered as deformations of spaces of their accessory parameters,
	namely, we can say that the confluence of singularities 
	gives rise to deformations of moduli spaces of differential equations. 
	Indeed in \cite{CMR}, Chekhov-Mazzocco-Rubtsov defined a deformation of 
	Painlev\'e monodromy manifolds associated to the confluence of singular points 
	of differential equations. Namely, they considered 
	monodromy manifolds of 
	linear ordinary differential equations on the Riemann sphere 
	whose isomonodromic deformations give rise to Painlev\'e equations,
	and obtained explicit deformations of these manifolds which 
	correspond to the confluence of singular points 
	of differential equations.
	In case of moduli spaces of connections, 
	Inaba constructed in \cite{Ina} a one-parameter deformation of 
	 moduli spaces of meromorphic connections with unramifed irregular singular points
	 whose Hukuhara-Turrittin-Levelt normal forms have distinct eigenvalues.
	 A similar deformation also considered by Gaiur-Mazzocco-Rubtsov in 
	 \cite{GMR}, in which they defined a deformation of 
	 coadjoint orbits of a Lie algebra of polynomials which they call 
	 the Takiff algebra.  
	 Furthermore, 
	Gaiur-Mazzocco-Rubtsov and Inaba 
	considered in their papers \cite{GMR} and \cite{Ina}
	the confluence of 
	isomonodromic deformation equations.  
	
	Based on these preceding works, we shall construct a deformation space of moduli space of differential equations 
	describing the confluence of their singularities.
	Let us explain our main result. 
	We focus on the opposite operation to the confluence, i.e., 
	the unfolding of irregular singular points.
	Namely, we consider the procedure to unfold an 
	unramified irregular singular points to regular singular ones.
	Let us consider meromorphic connections on a trivial 
	bundle defined over the complex projective line $\mathbb{P}^{1}$ with unramified irregular singularities
	on a finite subset $D\subset \mathbb{P}^{1}$. 
	We fix a collection of unramified Hukuhara-Turrittin-Levelt normal forms $\mathbf{H}=(H_{a})_{a\in D}$
	at each singular point $a\in D$.
	Then Boalch introduced in \cite{Boa}
	the moduli space of irreducible meromorphic connections 
	with the fixed normal forms $\mathbf{H}$ which we denote by $\mathcal{M}_{s}^{*}(\mathbf{H})$.
	Firstly, we introduce 
	the deformation $\mathbf{H}(\mathbf{c})$, $\mathbf{c}\in \mathbb{C}^{N_{\mathbf{H}}}$, of
	the collection of normal forms $\mathbf{H}$, where $N_{\mathbf{H}}$
	is the positive integer uniquely determined by $\mathbf{H}$.   
	This is a family of unramified Hukuhara-Turrittin-Levelt normal forms 
	describing the unfolding procedure of unramified irregular singular points
	to regular singular ones.
	Then our main theorem is the following.
	\begin{thm}[Theorem \ref{thm:main1}, Theorem \ref{thm:main2}]
	Suppose that $\mathcal{M}_{s}^{*}(\mathbf{H})\neq \emptyset$.
	Then, there are a Zariski open subset $\mathbb{D}(\mathbf{H})\subset \mathbb{C}^{N_{\mathbf{H}}}$,
	holomorphic Poisson manifold $\mathcal{M}_{s}^{*}(\mathbf{H})_{\mathbb{D}(\mathbf{H})}$,
	and holomorphic map $\theta\colon \mathcal{M}_{s}^{*}(\mathbf{H})_{\mathbb{D}(\mathbf{H})}\rightarrow 
	\mathbb{D}(\mathbf{H})$ such that 
	for each $\mathbf{c}\in \mathbb{D}(\mathbf{H})$ there exits an embedding 
	\[
		\theta^{-1}(\mathbf{c})\hookrightarrow \mathcal{M}_{s}^{*}(\mathbf{H}(\mathbf{c}))	
	\]
	onto a Zariski open subset. Namely, the complex manifold  $\mathcal{M}_{s}^{*}(\mathbf{H})_{\mathbb{D}(\mathbf{H})}$
	is a deformation of the moduli space $\mathcal{M}_{s}^{*}(\mathbf{H})$ and moreover
	each fiber $\theta^{-1}(\mathbf{c})$
	is isomorphic to a Zariski open subset of the moduli space of meromorphic connections 
	$\mathcal{M}_{s}^{*}(\mathbf{H}(\mathbf{c}))$ obtained by the unfolding of $\mathbf{H}$.
	
	Furthermore, the family of the fibers $(\theta^{-1}(\mathbf{c}))_{\mathbf{c}\in \mathbb{D}(\mathbf{H})}$
	is the symplectic foliation of the regular Poisson manifold $\mathcal{M}_{s}^{*}(\mathbf{H})_{\mathbb{D}(\mathbf{H})}$
	and the above embeddings are symplectic morphisms.
	\end{thm}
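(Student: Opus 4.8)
The plan is to realize the total space $\mathcal{M}_s^*(\mathbf{H})_{\mathbb{D}(\mathbf{H})}$ as a reduction, performed in families over the base $\mathbb{D}(\mathbf{H})$, of a deformed Takiff-type Poisson manifold in the spirit of \cite{GMR} and \cite{Ina}. First I would make the local unfolding explicit: at each $a\in D$ the irregular type recorded by $H_a$ is a diagonal tuple of Laurent tails with a pole of Poincar\'e rank $k_a$, and I would deform it by spreading this pole into lower-order (generically simple) poles, the spreading being governed by the coordinates of $\mathbf{c}$; taking the confluent limit $\mathbf{c}\to\zero$ recovers $H_a$. Concretely this amounts to replacing the truncated current algebra $\g\otimes \C[z]/(z^{k_a+1})$ by the deformed algebra $\g\otimes \C[z]/(p_a(z,\mathbf{c}))$, whose roots collide as $\mathbf{c}\to\zero$; the integer $N_{\mathbf{H}}$ is exactly the number of deformation parameters so produced, and $\mathbb{D}(\mathbf{H})$ is cut out by the nonvanishing of the discriminants of the $p_a$ together with the nonresonance conditions ensuring that the unfolded local data defines orbits of the correct type.

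Second, following Boalch's presentation of $\mathcal{M}_s^*(\mathbf{H})$ as a symplectic quotient of a product of extended orbits by a gauge group $G$ with moment map $\mu$, I would assemble the family: replace each orbit by the family of orbits $\mathcal{O}_a(\mathbf{c})$ attached to $H_a(\mathbf{c})$, form a relative moment map $\mu_{\mathbb{D}}$ over $\mathbb{D}(\mathbf{H})$, and set $\mathcal{M}_s^*(\mathbf{H})_{\mathbb{D}(\mathbf{H})}:=\mu_{\mathbb{D}}^{-1}(0)^{s}/G$ on the stable locus, with $\theta$ induced by the projection to the base. On the ambient deformed Takiff manifold the coefficients of the polynomials $p_a$, i.e.\ the coordinates of $\mathbf{c}$, are Casimir functions, so the Lie--Poisson structure descends through the reduction to a holomorphic Poisson structure on $\mathcal{M}_s^*(\mathbf{H})_{\mathbb{D}(\mathbf{H})}$ for which $\theta$ is Poisson with $\mathbb{D}(\mathbf{H})$ carrying the zero structure; the leaves therefore lie in the fibers of $\theta$.

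Third, for the identification $\theta^{-1}(\mathbf{c})\hookrightarrow \mathcal{M}_s^*(\mathbf{H}(\mathbf{c}))$ I would check that the fiberwise reduction over a fixed $\mathbf{c}\in\mathbb{D}(\mathbf{H})$ coincides with Boalch's construction of the moduli space for the unfolded collection $\mathbf{H}(\mathbf{c})$, the only discrepancy being a proper Zariski closed locus where some unfolded datum degenerates (collision of poles or loss of the prescribed orbit type), which realizes the image as a Zariski open subset. Nonemptiness at $\mathbf{c}=\zero$ propagates to nearby $\mathbf{c}$ by openness of stability, and since $\mu_{\mathbb{D}}$ is a submersion along the stable locus the reduction is smooth of constant relative dimension; hence the Poisson structure has constant rank, the manifold is regular, and its symplectic foliation is exactly $(\theta^{-1}(\mathbf{c}))_{\mathbf{c}\in\mathbb{D}(\mathbf{H})}$. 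That the embeddings are symplectic then follows by comparing the relative reduced form with the canonical symplectic form on $\mathcal{M}_s^*(\mathbf{H}(\mathbf{c}))$, both being induced by the same Lie--Poisson data.

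The main obstacle I expect is twofold. The first difficulty is constructing the local unfolding $H_a(\mathbf{c})$ functorially and proving that the orbits $\mathcal{O}_a(\mathbf{c})$ glue into a smooth family over $\mathbb{D}(\mathbf{H})$ with a well-behaved relative moment map: under unfolding the number of singular points and the Poincar\'e ranks change, so the combinatorics of how a single high-order pole redistributes must be controlled uniformly in $\mathbf{c}$. The second, and sharper, difficulty is the comparison with the intrinsic moduli space $\mathcal{M}_s^*(\mathbf{H}(\mathbf{c}))$: one must show that irreducibility is preserved on a Zariski open set and that the unfolding introduces no apparent singularities, so that the comparison map is an open immersion rather than merely dominant. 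Pinning down $\mathbb{D}(\mathbf{H})$ precisely and proving that the excluded locus is genuinely Zariski closed is the technical crux on which the whole statement rests.
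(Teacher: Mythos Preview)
Your high-level strategy---deform the truncated current algebras $\mathfrak{g}\otimes\mathbb{C}[z]/(z^{k_a+1})$ to $\mathfrak{g}\otimes\mathbb{C}[z]/(p_a(z,\mathbf{c}))$, form a relative moment map, and take a Poisson reduction over the base---matches the paper's framework, and your observation that the base coordinates are Casimirs (so the fibers are the symplectic leaves) is the correct mechanism. However, there is a genuine gap at precisely the point you flag as your first obstacle: you propose to assemble a smooth family of \emph{orbits} $\mathcal{O}_a(\mathbf{c})$, but coadjoint orbits of the varying groups $\mathrm{GL}_n(\mathbb{C}[z]/(p_a(z,\mathbf{c})))$ do not in any obvious way glue into a smooth family across the confluence locus, and you offer no mechanism for making them do so.

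The paper resolves this not by deforming orbits but by invoking Yamakawa's triangular factorization (Theorem~\ref{thm:stepwise}), which gives an explicit affine isomorphism $\mathbb{O}_{H_{\mathrm{irr}}}\cong\prod_{l=2}^{k}T^{*}(N^{-})^{(l)}$ where each $(N^{-})^{(l)}$ is a unipotent group depending only on the spectral type of $H$. The deformation is performed on these unipotent factors: each $(N^{-})^{(l)}$ is replaced by a Lie groupoid $(N^{-})^{(l)}_{\mathbb{D}(H)}$ over the base (with fiber $N_{\mathbf{m}_{l}^{l+1}}^{-}(\widehat{\mathbb{C}[z]}(\mathbf{c})_{l-1})^{1}$), and the relative cotangent bundles of these groupoids are summed to form $(\mathbb{O}_{H_{\mathrm{irr}}})_{\mathbb{D}(H)}$. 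Because the underlying total space is a trivial vector bundle over $\mathbb{D}(H)$, smoothness of the family and regularity of the Poisson structure are automatic; the substantive content is then that partial fraction decomposition identifies each fiber, after the extended-orbit reduction, with a Zariski open dense subset of the product of truncated orbits for the unfolded HTL data (Theorem~\ref{thm:fiber}). This factorization is the missing key lemma in your plan.

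Your description of $\mathbb{D}(\mathbf{H})$ is also incorrect: it is \emph{not} cut out by nonvanishing of the discriminants of the $p_a$. The origin $\mathbf{0}$ (full confluence) lies in $\mathbb{D}(H)$ by design, since the original moduli space must appear as the central fiber. What is removed is the locus where the unfolded data fails to have the predicted spectral type---the hypersurfaces $D^*_{i;j,j'}$ defined by vanishing of certain eigenvalue-difference polynomials $\alpha^{*}(\mathbf{c})_{i;j,j'}$ (see~(\ref{eq:deformationspace}))---and this is disjoint from the discriminant locus in general.
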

	
	Our families $\mathcal{M}_{s}^{*}(\mathbf{H})_{\mathbb{D}(\mathbf{H})}$ of moduli space cover many of 
	known deformations of differential equations induced by the 
	confluence of  their singular points.
	For example, 
	confluent families explained above, i.e., families of Gauss hypergeometric equation, Heun equation, 
	and equations with 4 accessory parameters appearing in \cite{HKNS} by Kawakami-Nakamura-Sakai
	are all contained in our families 
	when we focus on unramified irregular 
	singularities.
	Our construction of the deformation is strongly influenced by 
	Oshima's work in \cite{Osh2}.
	In that paper, Oshima defined a good confluence family of linear differential equations 
	on the Riemann sphere without accessory parameter which he called 
	the versal family of rigid differential equations.
	Our result can be said to be a generalization of his versal family to non-rigid cases.
	For instance, Oshima's families recovers from our deformation spaces of moduli spaces of
	rigid connections. 
	 
	\vspace{2mm}
	\noindent
	\textbf{Acknowledgement}
	
	The author express his gratitude to Toshio Oshima for his inspiring works 
	and many variable discussions. 
	He thanks to Hiroshi Kawakami, Akane Nakamura, and Daisuke Yamakawa.
	Discussions with them in the early stage 
	help him very much to press forward this project.
	He is also grateful to Takuro Mochizuki, Hiraku Nakajima, and Vladimir Rubtsov
	for their variable comments.
	
	This article is submitted to RIMS K\^oky\^uroku Bessatsu, a Festschrift volume 
	in Honor of Yoshitsugu Takei on the occasion of his 60th birthday. The author express his configurations.
	
	\section{Moduli space of meromorphic connections on a trivial bundle over the Riemann sphere 
	with unramified irregular singularities}
	
	In the paper \cite{JMU}, Jimbo, Miwa, and Ueno developed a general study of symplectic structure of the spaces of accessory parameters of 
	differential equations with irregular singular points. After their work, in \cite{Boa}, 
	Boalch introduced a natural construction of the symplectic structure from the symplectic geometry of coadjoint orbits and the theory of 
	symplectic reduction.  
	Following Boalch's method, we give a review of the symplectic structure of the moduli spaces
	of meromorphic connections on the trivial bundle with unramified irregular singular points. 
	A detailed exposition of the contents in this section can also be found in \cite{Yam3}.
	
	\subsection{Hukuhara-Turrittin-Levelt normal forms}
	A classification theory of local and formal differential equations is obtained by Hukuhara, Turrittin, and Levelt, see \cite{Huk1}, \cite{Tur}, \cite{Lev}.
	We give a quick review of this classification theory.
	
	\begin{dfn}
	Let $R$ be either the ring of formal power series $\mathbb{C}[\![z]\!]$ or 
	the field of Laurent series $\mathbb{C}(\!(z)\!)$, and 
	$V$ be a free $R$-module of finite rank. Let us consider
	a $\mathbb{C}$-linear map
	$\nabla\colon V\rightarrow V\otimes_{R}\mathbb{C}(\!(z)\!)dz$
	satisfying the Leibniz rule,
	\[
		\nabla(fv)=v\otimes \left(df\right)+f\nabla(v),\quad
		f \in R,\ v\in V,
	\]
	which is called {\em meromorphic connection} or {\em connection} on $V$.
	Here we denote the vector space of formal meromorphic 1-forms by
	$\mathbb{C}(\!(z)\!)dz:=\left\{f(z)dz\mid f(z)\in \mathbb{C}(\!(z)\!)\right\}$,
	 and $d\colon \mathbb{C}(\!(z)\!)\rightarrow \mathbb{C}(\!(z)\!)dz$
	stands for the {\em exterior derivative} defined by
	$df(z):=\left(\frac{d}{dz}f(z)\right)dz$ for $f(z)\in R$.
	Then we also call the pair $(\nabla,V)$ a {\em formal meromorphic connection} over $R$.
	\end{dfn}
	Let $(\nabla,V)$ be a connection over $\mathbb{C}(\!(z)\!)$ of rank $n$
	and consider an algebraic field  extension $L$ of $\mathbb{C}(\!(z)\!)$.
	Then Newton-Puiseux theory tells us that there exists a positive integer $q\in \mathbb{Z}_{>0}$
	and $L\cong \mathbb{C}(\!(t)\!)$ with $t^{q}=z$.
	Then the inclusion map $\iota\colon \mathbb{C}(\!(z)\!)\ni f(z)\rightarrow f(t^{q})\in 
	\mathbb{C}(\!(t)\!)$ induces the map 
	\[
		\begin{array}{cccc}	
		\iota_{*}\colon& \mathbb{C}(\!(z)\!)dz&\longrightarrow &\mathbb{C}(\!(t)\!)dt\\
		&f(z)dz&\longmapsto& f(t^{q})qt^{q-1}\,dt
		\end{array}	
	\]
	which is compatible with the exterior derivative.
	Then we can define the connection $(\nabla_{t},V_{t})$ over $\mathbb{C}(\!(t)\!)$
	which we call the {\em extension} of $(\nabla, V)$ to $\mathbb{C}(\!(t)\!)$ as follows.
	Let $V_{t}:=\mathbb{C}(\!(t)\!)\otimes_{\mathbb{C}(\!(z)\!)} V$ be the extension of scalars
	of 
	 $V$ with 
	the inclusion map $\iota_{V}\colon V\ni v \mapsto 1\otimes v\in V_{t}$
	as $\mathbb{C}(\!(z)\!)$-vector spaces.
	As well as the above, we have the induced morphism as $\mathbb{C}(\!(z)\!)$-vector spaces,
	\[
		\begin{array}{cccc}	
		\iota_{V\,*}\colon &V\otimes_{\mathbb{C}(\!(z)\!)}\mathbb{C}(\!(z)\!)dz&\longrightarrow &
		V_{t}\otimes_{\mathbb{C}(\!(t)\!)}\mathbb{C}(\!(t)\!)dt\\
		&v\otimes f(z)dz&\longmapsto &\iota_{V}(v)\otimes \iota_{*}(f(z)dz)
		\end{array}	
	\]
	Then we define the following $\mathbb{C}$-linear map
	$\nabla_{t}\colon V_{t}\rightarrow V_{t}\otimes_{\mathbb{C}(\!(t)\!)}\mathbb{C}(\!(t)\!)dt$
	by
	\[
		\nabla_{t}(f(t)\otimes v):=\iota_{V}(v)\otimes df(t)+f(t)\otimes \iota_{V\,*}(\nabla v),\quad
		f(t)\in \mathbb{C}(\!(t)\!),\ v\in V.
	\]
	
	The classification theory of connections
	 over the algebraic closure $\mathbb{C}(\!(z)\!)^{\text{alg}}$ was developed by Hukuhara and Turrittin independently
	and improved and simplified as in the following manner by Levelt afterward.
	We say that a connection $(\nabla,V)$ over $\mathbb{C}(\!(t)\!)$ is {\em diagonalizable} when $V$ is
	a direct sum of 1-dimensional $\nabla$-invariant subspaces $V_{i}$, i.e.,  $\nabla(V_{i})\subset V_{i}\otimes \mathbb{C}(\!(t)\!)dt$.
	\begin{thm}[Hukuhara, Turrittin, Levelt, \cite{Huk1}, \cite{Tur}, \cite{Lev}]
		Let $(\nabla,V)$ be a connection over $\mathbb{C}(\!(z)\!)$.
		Then there exists an algebraic filed extension $\mathbb{C}(\!(t)\!)$ of $\mathbb{C}(\!(z)\!)$
		of degree $q$
		such that $\nabla_{t}$ has the decomposition
		\[
			t\nabla_{t}=S+N
		\]
		where $S$ is a diagonalizable connection on $V_{t}$,
		$N$ is a nilpotent $\mathbb{C}(\!(t)\!)$-linear map and
		$S$ and $N$ commute with each other, i.e., $[S, N]:=SN-NS=0$.
		The smallest integer $q$ satisfying the above is called {\em ramification index}
		of the connection.
		If the ramification index is $q=1$,
		we say that $\nabla$ is {\em unramified}.
	
	Furthermore,
		there exists a basis of $V_{t}$ over $\mathbb{C}(\!(t)\!)$
		under which
		$S$ and $N$ can be written as
		\[
			S=td_{t}-\sum_{i=0}^{k}S_{i}t^{-i}dt,\quad
			N=-N_{0}\,dt
		\]
		with diagonal matrices $S_{i}\in M_{n}(\mathbb{C})$, $i=0,1,\ldots,k$ 
		and a nilpotent matrix $N_{0}\in M_{n}(\mathbb{C})$. Here $n$ is the rank of $V$ and
		$A\,dt$ for $A\in M_{n}(\mathbb{C}(\!(t)\!))$ denotes the 
		$\mathbb{C}(\!(t)\!)$-linear map $V_{t}\ni v\mapsto Av\,dt\in 
		V_{t}\otimes_{\mathbb{C}(\!(t)\!)}\mathbb{C}(\!(t)\!)dt$.
	\end{thm}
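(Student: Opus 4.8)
The plan is to prove this by induction on the rank $n$, using a \emph{splitting lemma} driven by the eigenvalues of the leading coefficient of the connection, together with a reduction that peels off rank-one exponential factors and, when necessary, passes to a ramified cover $t^{q}=z$ to extract roots. First I would fix a basis of $V$ and write $\nabla=d+\Omega$ with $\Omega\in M_{n}(\mathbb{C}(\!(z)\!))\,dz$; multiplying by the smallest power of $z$ clearing the pole, I obtain $z^{k+1}\nabla=z^{k+1}d+B(z)\,dz$ with $B(z)=\sum_{i\geq 0}B_{i}z^{i}$ and leading coefficient $B_{0}\neq 0$. The Newton polygon of $\Omega$ records the slopes, and the whole argument is organized so that each step either lowers the pole order / slope or strictly reduces the rank.

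The technical heart is the splitting lemma: if the leading coefficient $B_{0}$ has at least two distinct eigenvalues, then $V$ decomposes as a direct sum of proper $\nabla$-stable submodules. I would prove this by constructing a formal gauge transformation $P=I+\sum_{m\geq 1}P_{m}z^{m}$ that block-diagonalizes $\Omega$ with respect to a spectral decomposition $B_{0}=B_{0}'\oplus B_{0}''$. Matching coefficients order by order reduces, at each step, to a Sylvester-type equation $B_{0}P_{m}-P_{m}B_{0}=(\text{known lower-order data})$ on the off-diagonal blocks; since the spectra of $B_{0}'$ and $B_{0}''$ are disjoint, the operator $X\mapsto B_{0}X-XB_{0}$ is invertible on those blocks and $P_{m}$ is uniquely determined. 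The derivative term contributes only to strictly higher order, so it does not obstruct the leading Sylvester step. \textbf{This order-by-order solvability, with the attendant bookkeeping of the Newton polygon and the uniform treatment of the slope-positive case alongside the regular case, is the step I expect to be the main obstacle.}

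When the leading coefficient has a single eigenvalue the module may be indecomposable, and here ramification enters. I would first twist by a rank-one connection $\nabla\mapsto\nabla\otimes(d+\phi)$ with $\phi$ chosen to cancel the scalar leading part; this strictly lowers the pole order, equivalently the slope. If the relevant exponential factor $\phi$ is not a Laurent series in $z$ but only in a root $t=z^{1/q}$ --- which is exactly what the Newton--Puiseux analysis of the characteristic eigenvalues forces --- I would pass to the extension $\mathbb{C}(\!(t)\!)$, where the eigenvalues become genuine series and typically separate, so that the splitting lemma applies. Iterating the splitting lemma together with the exponential twist, and inducting on rank and slope, the process terminates with a decomposition of $V_{t}$ into tensor products $E_{\phi_{j}}\otimes R_{j}$, where each $E_{\phi_{j}}$ is rank one with connection $d+\phi_{j}$ and each $R_{j}$ is regular singular.

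It remains to assemble the normal form. On each regular-singular factor $R_{j}$ I would put the residue into Jordan canonical form, separating it into a diagonalizable and a nilpotent part. Collecting the diagonal exponential and residue data across all blocks into diagonal matrices $S_{i}$, and the nilpotent residue contributions into a single nilpotent $N_{0}$, yields $S=t\,d_{t}-\sum_{i=0}^{k}S_{i}t^{-i}\,dt$ and $N=-N_{0}\,dt$ with $t\nabla_{t}=S+N$. This decomposition is precisely the additive Jordan--Chevalley decomposition of the operator $t\nabla_{t}$: $S$ is diagonalizable, $N$ is nilpotent, and $[S,N]=0$ holds because $N_{0}$, being the nilpotent part of each block's residue, commutes with the simultaneous eigenspace decomposition of the $S_{i}$. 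Finally, the smallest $q$ for which all of this is achievable is the ramification index, and $q=1$ is exactly the unramified case, recovering the classical result of Hukuhara, Turrittin, and Levelt.
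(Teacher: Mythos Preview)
Your outline is a correct sketch of the classical proof of the Hukuhara--Turrittin--Levelt theorem: the splitting lemma via the Sylvester equation on off-diagonal blocks, the shearing/twisting by rank-one factors to lower the slope, the passage to a ramified cover when the Newton--Puiseux data forces fractional exponents, and the final assembly via the Jordan decomposition of the residue on each regular-singular piece. This is essentially the argument one finds in Wasow, Levelt, and later expositions.

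However, there is nothing to compare against: the paper does not prove this theorem. It is stated in Section~1.1 as a classical result, attributed to Hukuhara, Turrittin, and Levelt with citations \cite{Huk1}, \cite{Tur}, \cite{Lev}, and no proof is given or even sketched. The paper uses the theorem only to motivate the definition of HTL normal forms, which are the actual objects of study. So your proposal is not wrong, but it is answering a question the paper deliberately defers to the literature; the paper's ``proof'' consists solely of the three references.
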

	This leads to the following definition of normal forms in $M_{n}( \mathbb{C}(\!(t)\!))dt$
	which are usual Jordan normal forms if $k=1$. 
	\begin{dfn}[HTL normal form]\normalfont
		A {\em Hukuhara-Turrittin-Levelt normal form} or {\em HTL normal form}
		for short, is an element in $M_{n}( \mathbb{C}(\!(t)\!))dt$ of the form
		\[
			\left(\frac{S_{k}}{t^{k}}+\cdots+\frac{S_{1}}{t}+S_{0}+N_{0}\right)\frac{dt}{t}
		\]
		where $S_{i}\in M_{n}(\mathbb{C})$ are diagonal matrices commute with the nilpotent 
		matrix $N_{0}\in M_{n}(\mathbb{C})$, i.e., 
		$[S_{i},N_{0}]=0$, for all $i=0,1,\ldots,k-1$.
	The above theorem says that for a connection $(V,\nabla)$
	over $\mathbb{C}(\!(z)\!)$, we can find an extension field 
	$\mathbb{C}(\!(t)\!)$ with $t^{q}=z$ and a HTL normal form
	$H\in M_{n}(\mathbb{C}(\!(t)\!))$ such that 
	\[
		\nabla_{t}=d-H	
	\]	
	under a suitable basis of $V_{t}$ over $\mathbb{C}(\!(t)\!)$. 
	This normal form $H$ is called the HTL normal form of $(V,\nabla)$.
	In particular, we say that $(V,\nabla)$ is an {\em unramified} connection 
	if we can take an HTL normal form $H$ from $M_{n}(\mathbb{C}(\!(z)\!))dz
	\subset M_{n}(\mathbb{C}(\!(t)\!))dt$. In this case, we also call 
	this HTL normal form $H\in M_{n}(\mathbb{C}(\!(z)\!))dz$ an {unramified }
	HTL normal form.
	\end{dfn}
	
	Fix an unramified HTL normal form
	\[
		H=\left(\frac{S_{k-1}}{z^{k}}+\cdots+\frac{S_{1}}{z}+S_{0}+N_{0}\right)\frac{dz}{z}\in
		M_{n}(\mathbb{C}(\!(z)\!)dz)
	\]
	and consider an unramified meromorphic connection $(\nabla_{H},\mathbb{C}[\![z]\!]^{\oplus n})$
	over $\mathbb{C}[\![z]\!]$ defined by
	\[
		\nabla_{H}:=d-H.
	\]
	Then, a connection $(\nabla,\mathbb{C}[\![z]\!]^{\oplus n})$ is isomorphic to
	$(\nabla_{H},\mathbb{C}[\![z]\!]^{\oplus n})$ if and only if
	there exists $g(z)\in \mathrm{GL}_{n}(\mathbb{C}[\![z]\!])$
	such that
	\[
		\nabla=d-g(z)[H],
	\]
	where
	\[
		g(z)[H]:=g(z)Hg(z)^{-1}+\left(\frac{d}{dz}g(z)\right)g(z)^{-1}dz.
	\]
	Thus the isomorphic class of $(\nabla_{H},\mathbb{C}[\![z]\!]^{\oplus n})$ is parametrized by the orbit
	\[
		O_{H}:=\left\{
		g(z)[H]\in M_{n}(\mathbb{C}(\!(z)\!))dz\,\middle|\, g(z)\in \mathrm{GL}_{n}(\mathbb{C}[\![z]\!])
		\right\}.
	\]
	
	As an analogue of this isomorphic class $O_{H}$, Boalch introduced the truncated orbit of $H$
	in \cite{Boa}.
	\begin{dfn}[truncated orbit]
	 Let $H$ be an HTL normal form  
	 and we regard $H$ as an element in $M_{n}(\mathbb{C}(\!(z)\!)/\mathbb{C}[\![z]\!])dz$.
	 Then the orbit of $H$,
	 \[
		\mathbb{O}_{H}:=\left\{gHg^{-1}\in M_{n}(\mathbb{C}(\!(z)\!)/\mathbb{C}[\![z]\!])dz\,\middle|\, g\in \mathrm{GL}_{n}(\mathbb{C}[\![z]\!])\right\}
	 \]
	 is called the {\em truncated orbit} of $H$.
	\end{dfn}
	Note that the truncated orbit of $\mathbb{O}_{H}$ is characterized by $H$ in the following sense.
	\begin{prp}[Boach \cite{Boa}, Yamakawa \cite{Yam1}]
		If an HTL normal form $H'$
		contained in $\mathbb{O}_{H}$, there exists $C\in \mathrm{GL}_{n}(\mathbb{C})$ such that $H'=CHC^{-1}$.
	\end{prp}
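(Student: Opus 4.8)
The plan is to read off the constant $C$ from the most singular coefficient of $H$ and then to prove that all higher Taylor coefficients of the gauge $g$ are irrelevant modulo $\mathbb{C}[\![z]\!]$, once both $H$ and $H'$ are required to be in normal form. Write $H=A(z)\,dz$ and $H'=A'(z)\,dz$ with
\[
A(z)=\Lambda(z)+\frac{S_0+N_0}{z},\qquad \Lambda(z)=\sum_{i=2}^{k+1}\frac{D_i}{z^{i}},
\]
where the $D_i\in M_n(\mathbb{C})$ and $S_0$ are diagonal and $N_0$ is nilpotent commuting with every $D_i$ and with $S_0$; denote the data of $A'$ by $D_i',S_0',N_0'$. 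By the definition of $\mathbb{O}_H$, the hypothesis $H'\in\mathbb{O}_H$ reads
\[
A'(z)=g(z)A(z)g(z)^{-1}+R(z),\qquad g=\textstyle\sum_{j\ge 0}g_jz^{j}\in\mathrm{GL}_n(\mathbb{C}[\![z]\!]),
\]
for some holomorphic $R(z)\in M_n(\mathbb{C}[\![z]\!])$, and my candidate for the constant is $C=g_0$.

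First I would match the coefficient of the most singular power $z^{-(k+1)}$, obtaining $D_{k+1}'=g_0D_{k+1}g_0^{-1}$; since $D_{k+1}$ and $D_{k+1}'$ are diagonal, $g_0$ carries the eigenspaces of $D_{k+1}$ to those of $D_{k+1}'$. Replacing $H'$ by $g_0^{-1}H'g_0$ — a constant conjugation that I reinstate at the end — I may assume $g_0=\mathrm{id}$ and $D_{k+1}'=D_{k+1}$. Writing $g=\exp X(z)$ with $X(z)\in zM_n(\mathbb{C}[\![z]\!])$ and expanding $gAg^{-1}=A+[X,A]+\tfrac12[X,[X,A]]+\cdots$, I would then match polar coefficients from $z^{-k}$ downward. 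At order $z^{-k}$ one finds $D_k'-D_k=[X_1,D_{k+1}]$; the left side is diagonal while the right side has vanishing diagonal, so both sides vanish, giving $D_k'=D_k$ and $[X_1,D_{k+1}]=0$. The goal of the induction is to establish, by descending through the singular orders, that $D_i'=D_i$ for all $i\ge 2$ and that the off--block part of every $X_j$ vanishes, where the blocks are the subsets of $\{1,\dots,n\}$ on which all the $D_i$ are simultaneously scalar. The mechanism is that two blocks are separated by some index $i$ with $D_i$ taking different scalars on them, and on that off--block component $\ad(D_i)$ is invertible; matching the corresponding coefficient then solves for the off--block part of the relevant $X_j$, provided the iterated commutators $[X,[X,A]],\dots$ contribute nothing new at that order.

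Once every $X_j$ is block diagonal, $g$ commutes with $\Lambda$, so $g\Lambda g^{-1}=\Lambda$ has no pole of order $1$; hence the coefficient of $z^{-1}$ in $gAg^{-1}$ is that of $z^{-1}gMg^{-1}$ with $M=S_0+N_0$, namely $g_0Mg_0^{-1}=M$. Matching residues gives $S_0'+N_0'=S_0+N_0$, so $A'=A$, and reinstating the initial $g_0$ produces a single $C=g_0\in\mathrm{GL}_n(\mathbb{C})$ with $H'=CHC^{-1}$. The step I expect to be the real obstacle is not the residue, which collapses as soon as $g_0=\mathrm{id}$, but precisely the singular--order induction that forces the off--block coefficients of $g$ to die: because different pairs of blocks may be separated only by different $D_i$, one must run a nested induction in $j$ and in the pole order, using invertibility of $\ad(D_i)$ for a separating index while simultaneously checking that the higher iterated commutators vanish at the order in question — their inner brackets involving off--block components already known to vanish. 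Controlling this interaction, rather than the handling of the nilpotent residue $N_0$, is the delicate bookkeeping on which the argument turns, since it is exactly the resulting block diagonality of $g$ that decontaminates the order $z^{-1}$ equation and isolates the residue.
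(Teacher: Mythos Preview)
The paper does not supply its own proof of this proposition; it simply records the statement with attribution to Boalch and Yamakawa, so there is no ``paper's proof'' to compare against. Your strategy---extract $C=g_0$ from the top singular coefficient, reduce to $g_0=\mathrm{id}$, and kill the higher Taylor coefficients of $g$ by matching poles---is the standard one and is sound in outline. Two points in your write-up need repair, however.

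First, the reduction $H'\mapsto g_0^{-1}H'g_0$ destroys diagonality: the new coefficients $g_0^{-1}D_i'g_0$ are no longer diagonal in general, so at order $z^{-k}$ the assertion ``the left side is diagonal'' is false. What survives is that $g_0^{-1}D_k'g_0$ commutes with $D_{k+1}$ (because $D_k'$ commutes with $D_{k+1}'$ and $g_0^{-1}D_{k+1}'g_0=D_{k+1}$), so the left side lies in $\ker(\ad D_{k+1})$ while the right side lies in $\mathrm{im}(\ad D_{k+1})$; semisimplicity of $D_{k+1}$ then forces both to vanish. Every subsequent step must be phrased the same way: block-diagonal with respect to the centraliser of the already-matched $D_i$'s, not literally diagonal.

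Second, and more seriously, the two-phase structure ``induction on orders $z^{-k},\dots,z^{-2}$, then residue'' does not deliver what you claim. Tracking the filtration $L_i:=\bigcap_{\ell\ge i}\ker(\ad D_\ell)$, the induction through order $z^{-2}$ yields only $X_j\in L_{j+2}$, not $X_j\in L_2$; in particular $[X_j,D_{j+1}]$ need not vanish, so $g$ does \emph{not} commute with $\Lambda$ and $(g\Lambda g^{-1})_{-1}$ is not yet zero. The residue equation $M'-M=\sum_{j=1}^{k}[X_j,D_{j+1}]$ must itself be fed into the same projection machinery, using that $M,M'\in L_2$ (since $S_0,N_0$ commute with every $S_i$); this simultaneously forces each $[X_j,D_{j+1}]=0$ and gives $M'=M$. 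In other words the residue is the final step \emph{of} the induction, not a separate consequence of block-diagonality already obtained. Your closing remark that the residue ``collapses as soon as $g_0=\mathrm{id}$'' is therefore misleading: it collapses only after the order-$z^{-1}$ equation has been decomposed exactly as the higher-order ones were.
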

	
	\subsection{Truncated orbits and extended orbits}
	Let a fraktur $\mathfrak{q}$ be
	a $\mathbb{C}$-subalgebra of $\mathfrak{gl}_{n}(\mathbb{C})=M_{n}(\mathbb{C})$ which can be seen 
	as a Lie subalgebra defining the bracket by $[X,Y]=XY-YX$ for $X,Y\in \mathfrak{q}$.
	Then we denote the corresponding analytic subgroup of $\mathrm{GL}_{n}(\mathbb{C})$ 
	by the roman $Q$.
	
	For a positive integer $l$, we consider a $\mathbb{C}[\![z]\!]$-module $\mathbb{C}[z^{-1}]_{l}:=z^{-(l+1)}\mathbb{C}[\![z]\!]/\mathbb{C}[\![z]\!]$.
	Since the annihilator ideal of this module is 
	$\mathrm{Ann}_{\mathbb{C}[\![z]\!]}(\mathbb{C}[z^{-1}]_{l})=\langle z^{l+1}\rangle_{\mathbb{C}[\![z]\!]}$,
	$\mathbb{C}[z^{-1}]_{l}$ can also be seen as a $\mathbb{C}[z]_{l}:=\mathbb{C}[\![z]\!]/\langle z^{l+1}\rangle_{\mathbb{C}[\![z]\!]}\cong \mathbb{C}[z]/
	\langle z^{l+1}\rangle_{\mathbb{C}[z]}$-module. 
	Usually we fix the following basis and identifications as $\mathbb{C}$-vector spaces,
	$\mathbb{C}[z^{-1}]_{l}\cong \{\sum_{i=0}^{l}\frac{a_{j}}{z^{i+1}}\mid a_{i}\in \mathbb{C}\}
	\cong \mathbb{C}^{l+1}$, $\mathbb{C}[z]_{l}\cong \{\sum_{i=0}^{l}a_{i}z^{i}\mid a_{i}\in \mathbb{C}\}\cong\mathbb{C}^{l+1}$.
	
	\subsubsection{Truncated orbits as coadjoint orbits}
	We can consider the finite dimensional complex Lie group $\mathrm{GL}_{n}(\mathbb{C}[z]_{l})$
	with the corresponding Lie algebra $\mathfrak{gl}_{n}(\mathbb{C}[z]_{l}):=M_{n}(\mathbb{C}[z]_{l})$.
	For a $\mathbb{C}$-subalgebra $\mathfrak{q}\subset \mathfrak{gl}_{n}(\mathbb{C})$,
	we define the $\mathbb{C}$-algebra $\mathfrak{q}(\mathbb{C}[z]_{l}):=\mathfrak{q}\otimes_{\mathbb{C}}\mathbb{C}[z]_{l}$
	which is naturally regarded as a a $\mathbb{C}$-subalgebra of $\mathfrak{gl}_{n}(\mathbb{C}[z]_{l})$. 
	Then we denote the corresponding analytic subgroup of e complex Lie group $\mathrm{GL}_{n}(\mathbb{C}[z]_{l})$
	by $Q(\mathbb{C}[z]_{l})$.
	
	If we regard $M_{n}(\mathbb{C}[z^{-1}]_{k})dz$ as a subspace of $M_{n}(\mathbb{C}(\!(z)\!)/
	\mathbb{C}[\![z]\!])dz$ by the inclusion map $\mathbb{C}[z^{-1}]_{k}\hookrightarrow \mathbb{C}(\!(z)\!)/\mathbb{C}[\![z]\!]$,
	then 
	the truncated orbit $\mathbb{O}_{H}$ can be seen as 
	the orbit through $H$ in $M_{n}(\mathbb{C}[z^{-1}]_{k})dz$ under the 
	action of $\mathrm{GL}_{n}(\mathbb{C}[z]_{k})$, i.e.,
	\[
		\mathbb{O}_{H}=\{gHg^{-1}\in M_{n}(\mathbb{C}[z^{-1}]_{k})dz\mid g\in \mathrm{GL}_{n}(\mathbb{C}[z]_{k})\}.
	\]
	Then the trace form 
	\[
		M_{n}(\mathbb{C}[z]_{k})\times M_{n}(\mathbb{C}[z^{-1}]_{k})dz\ni 
		(A,Bdz)\longmapsto \mathrm{res}_{z=0}(\mathrm{tr}(AB))dz\in \mathbb{C},
	\]
	is non-degenerate and thus this enable us to identify the dual space 
	$\mathfrak{gl}_{n}(\mathbb{C}[z]_{k})^{*}$ with $M_{n}(\mathbb{C}[z^{-1}]_{k})dz$.
	Thus $\mathbb{O}_{H}$
	is regarded as the coadjoint orbit of $\mathrm{GL}_{n}(\mathbb{C}[z]_{k})$
	 through $H\in M_{n}(\mathbb{C}[z^{-1}]_{k})dz
	\cong \mathfrak{gl}_{n}(\mathbb{C}[z]_{k})^{*}$.
	
	\subsubsection{Extended orbits of HTL normal forms}
	Here we recall the extended orbit which is introduced by Boalch.
	
	First we recall the fact that the coadjoint orbit of a complex Lie group $G$ through
	an element $\xi$ of $\mathfrak{g}^{*}$ is isomorphic to 
	a symplectic reduction of the cotangent bundle $T^{*}G$
	as follows.
	The cotangent bundle $\theta \colon T^{*}M\rightarrow M$ of a complex manifold $M$ 
	has the standard holomorphic symplectic form $\Omega_{T^{*}M}:=d\omega_{T^{*}M}$,
	the derivation of the canonical $1$-form $\omega_{T^{*}M}$ on 
	$T^{*}M$\footnote{Here we notice that many literature adopt $-d\omega_{T^{*}M}$ as the 
	standard symplectic form.}.
	For a holomorphic map $f\colon M\to N$ between complex manifolds, we denote the induced maps 
	$T_{m}M\rightarrow T_{f(m)}N$ and $T^{*}_{f(m)}N\rightarrow T^{*}_{m}M$
	by $(Tf)_{m}$ and $(T^{*}f)_{m}$ respectively for $m\in M$.
	Sometimes we write $f_{*,m}=(Tf)_{m}$ and $f^{*}_{m}=(T^{*}f)_{m}$ for simplicity. 
	Now we suppose $M=G$ and denote
	the right translation by $R_{g}\colon G\ni x\mapsto xg\in G$
	and the left translation by $L_{g}\colon G\ni x\mapsto gx\in G$
	for $g\in G$.
	We define the left action of $G$ on $T^{*}G$
	by 
	$\rho(g)\colon T^{*}G\ni \alpha \mapsto R^{*}_{g}(\alpha)\in T^{*}G$  
	for $g\in G$.
	Then the map 
	\[
		\mu_{G}\colon T^{*}G\ni \alpha \longmapsto L_{\theta(\alpha)}^{*}(\alpha)\in \mathfrak{g}^{*}
	\]
	is a moment map with respect to the action $\rho$, and for 
	the coadjoint orbit $\mathbb{O}_{\xi}$ through $\xi\in \mathfrak{g}^{*}$, there exists  
	an ismorphism
	\[
		\mathbb{O}_{\xi}\cong G_{\xi}\backslash \mu_{G}^{-1}(\xi)	
	\]
	as holomorphic  symplectic manifolds. Here $G_{\xi}$
	is the stabilizer subgroup of $\xi$ in $G$.
	
	Now we consider the specific coadjoint orbit $\mathbb{O}_{H}$ of $\mathrm{GL}_{n}(\mathbb{C}[z]_{k})$ through the 
	unramified HTL normal form $H$.
	Note that $\mathrm{GL}_{n}(\mathbb{C}[z]_{k})$ has the description as 
	the semi-direct product of $\mathrm{GL}_{n}(\mathbb{C})$ and the normal subgroup 
	\[
		\mathrm{GL}_{n}(\mathbb{C}[z]_{k})^{1}:=\{g(z)\in \mathrm{GL}_{n}(\mathbb{C}[z]_{k})\mid g(0)=E_{n} \},
	\]
	i.e., $\mathrm{GL}_{n}(\mathbb{C}[z]_{k})=\mathrm{GL}_{n}(\mathbb{C})\ltimes \mathrm{GL}_{n}(\mathbb{C}[z]_{k})^{1}$.
	The Lie algebra of this normal subgroup is denoted by $\mathfrak{gl}_{n}(\mathbb{C}[z]_{k})^{1}$.
	Also we denote the inclusion map of $\mathrm{GL}_{n}(\mathbb{C})$ and $\mathrm{GL}_{n}(\mathbb{C}[z]_{k})^{1}$
	into $\mathrm{GL}_{n}(\mathbb{C}[z]_{k})$ by $\iota_{0}$ and $\iota_{1}$, and 
	the projection maps from $\mathrm{GL}_{n}(\mathbb{C}[z]_{k})$ onto 
	$\mathrm{GL}_{n}(\mathbb{C})$ and $\mathrm{GL}_{n}(\mathbb{C}[z]_{k})^{1}$ by 
	$\pi_{0}$ and $\pi_{1}$ respectively.
	
	Then we can moreover consider the 
	intermediate reduction by considering the moment map with respect to the normal subgroup $\mathrm{GL}_{n}(\mathbb{C}[z]_{k})^{1}$,
	\[
		\mu_{\mathrm{GL}_{n}(\mathbb{C}[z]_{k})}^{1}:=\iota_{1}^{*}\circ \mu_{\mathrm{GL}_{n}(\mathbb{C}[z]_{k})}
		\colon  T^{*}\mathrm{GL}_{n}(\mathbb{C}[z]_{k})\rightarrow (\mathfrak{gl}_{n}(\mathbb{C}[z]_{k})^{1})^{*},
	\]
	where $\iota_{1}^{*}\colon \mathfrak{gl}_{n}(\mathbb{C}[z]_{k})^{*}\rightarrow  (\mathfrak{gl}_{n}(\mathbb{C}[z]_{k})^{1})^{*}$
	is the induced projection map by $\iota_{1}$.
	\begin{prp}
		Let us set $H_{\mathrm{irr}}:=\iota_{1}^{*}(H)$ and call it the {\em irregular type } of $H$.
		We denote the coadjoint orbit of $\mathrm{GL}_{n}(\mathbb{C}[z]_{k})^{1}$ through $H_{\mathrm{irr}}$
		by $\mathbb{O}_{H_{\mathrm{irr}}}$. Then 
		there exists an isomorphism 
		\[
			\mathrm{GL}_{n}(\mathbb{C}[z]_{k})^{1}_{H_{\mathrm{irr}}}\backslash (\mu_{\mathrm{GL}_{n}(\mathbb{C}[z]_{k})}^{1})^{-1}(H_{\mathrm{irr}})
			\cong T^{*}\mathrm{GL}_{n}(\mathbb{C})\times \mathbb{O}_{H_{\mathrm{irr}}}.
		\]
		as holomorphic symplectic manifolds.
	\end{prp}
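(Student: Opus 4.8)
The plan is to carry out the symplectic reduction explicitly in the left trivialization of the cotangent bundle. Write $G:=\mathrm{GL}_{n}(\C[z]_{k})$, $N:=\mathrm{GL}_{n}(\C[z]_{k})^{1}$ and $K:=\mathrm{GL}_{n}(\C)$, with Lie algebras $\g$, $\n$ and $\ka$, so that $G=K\ltimes N$ and $\g=\ka\oplus\n$ as vector spaces with $\n$ an ideal. Left translation gives a trivialization $T^{*}G\cong G\times\g^{*}$, $\alpha\mapsto(\theta(\alpha),L_{\theta(\alpha)}^{*}\alpha)$, under which the moment map $\mu_{G}$ recalled above is simply the projection $\mu_{G}(g,\xi)=\xi$ onto the second factor; consequently $\mu_{G}^{1}(g,\xi)=\iota_{1}^{*}\xi$, the restriction of $\xi$ to $\n$. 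First I would record, in these coordinates, the action $\rho|_{N}$ of the normal subgroup: a direct Maurer--Cartan computation shows $n'\cdot(g,\xi)=(g\,n'^{-1},\,\Ad^{*}(n')\xi)$, with $(\Ad^{*}(n')\xi)(X)=\xi(\Ad(n'^{-1})X)$.

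Next I would exploit the splitting. The pair $\iota_{0}^{*}\oplus\iota_{1}^{*}$ identifies $\g^{*}\cong\ka^{*}\oplus\n^{*}$; writing $\xi=(\lambda,\eta)$ and $g=\iota_{0}(k)\iota_{1}(n)$, the level set becomes
\[
(\mu_{G}^{1})^{-1}(H_{\mathrm{irr}})=\{(k,n,\lambda,\eta)\mid \eta=H_{\mathrm{irr}}\}\cong K\times N\times\ka^{*}.
\]
Because $\n$ is an ideal, $\Ad(n'^{-1})$ preserves $\n$ and acts trivially on $\g/\n\cong\ka$, so for $Y\in\ka$ one has $\Ad(n'^{-1})Y-Y\in\n$. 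Hence on the level set the residual stabilizer $N_{H_{\mathrm{irr}}}$ fixes $k$, sends $n\mapsto n\,n'^{-1}$, and shifts $\lambda$ by the cocycle $c(n')\colon Y\mapsto H_{\mathrm{irr}}\!\big(\Ad(n'^{-1})Y-Y\big)$, which depends only on $n'$ and $H_{\mathrm{irr}}$. These two observations are precisely what produce the two factors of the target.

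I would then write down the map. Choosing a primitive $\phi\colon N\to\ka^{*}$ of the cocycle $c$ (so that $\lambda+\phi(n)$ is $N_{H_{\mathrm{irr}}}$-invariant along the subgroup orbits), the assignment
\[
(k,n,\lambda)\longmapsto\Big((k,\lambda+\phi(n)),\ \Ad^{*}(n)H_{\mathrm{irr}}\Big)\in T^{*}K\times\mathbb{O}_{H_{\mathrm{irr}}}
\]
is constant on $N_{H_{\mathrm{irr}}}$-orbits, where $\Ad^{*}(n)H_{\mathrm{irr}}$ realizes the identification $N/N_{H_{\mathrm{irr}}}\cong\mathbb{O}_{H_{\mathrm{irr}}}$ coming from the coadjoint description $\mathbb{O}_{H_{\mathrm{irr}}}\cong N_{H_{\mathrm{irr}}}\backslash\mu_{N}^{-1}(H_{\mathrm{irr}})$ (the isomorphism recalled above, applied to the group $N$). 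Bijectivity and holomorphy follow from the dimension count $2\dim K+\dim N-\dim N_{H_{\mathrm{irr}}}$ on both sides together with an explicit inverse that reconstructs $n$ from its orbit image and $\lambda$ from $\lambda+\phi(n)$.

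The main obstacle is the final step: showing that the transported symplectic form is exactly the product of the canonical form on $T^{*}K$ and the Kirillov--Kostant--Souriau form on $\mathbb{O}_{H_{\mathrm{irr}}}$, with no cross (magnetic) term. Conceptually this is cotangent-bundle reduction of $T^{*}G$ by $N$, and the possible twist is governed by the curvature of a principal connection on $G\to G/N\cong K$; the semidirect decomposition $G=K\ltimes N$ exhibits this as the \emph{trivial} principal $N$-bundle $K\times N$, whose product connection is flat, so the magnetic term vanishes and $\phi$ exists globally. Concretely I would substitute the standard left-trivialized expression for the canonical form (with the sign convention fixed in the footnote) into the map above and check, using $[\ka,\n]\subseteq\n$ and $[\n,\n]\subseteq\n$, that after the shift by $\phi$ the $\ka^{*}$- and $\n^{*}$-blocks decouple on the level set; the residue--trace pairing then reproduces the two summands separately.
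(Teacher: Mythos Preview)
Your approach is correct and is essentially a hands-on unwinding of the same idea the paper uses. The paper's proof is a one-line citation: it invokes Hamiltonian reduction by stages for the semidirect product $\mathrm{GL}_{n}(\mathbb{C}[z]_{k})=\mathrm{GL}_{n}(\mathbb{C})\ltimes \mathrm{GL}_{n}(\mathbb{C}[z]_{k})^{1}$ (Theorem~10.5.1 in Marsden--Misio\l{}ek--Ortega--Perlmutter--Ratiu), which in this cotangent setting delivers exactly the product $T^{*}K\times\mathbb{O}_{H_{\mathrm{irr}}}$. What you are doing is reproving that special case by hand: the left trivialization, the cocycle $c(n')$ on $\ka^{*}$, the primitive $\phi(n)(Y)=H_{\mathrm{irr}}(\Ad(n^{-1})Y-Y)$, and the flat-connection argument for the vanishing magnetic term are precisely the ingredients of semidirect-product cotangent reduction. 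The payoff of your route is that it makes the isomorphism explicit and clarifies why no twist appears; the paper's route is shorter and places the result in its natural general framework. Either way, the symplectic-form check you flag as the ``main obstacle'' is exactly what the cited theorem packages for you, so if you want to keep your argument self-contained you should carry out the decoupling computation you outline rather than leave it as a sketch.
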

	\begin{proof}
		This is a direct consequence of the Hamiltonian reduction by stages with respect to the semi-direct product 
		$\mathrm{GL}_{n}(\mathbb{C}[z]_{k})=\mathrm{GL}_{n}(\mathbb{C})\ltimes \mathrm{GL}_{n}(\mathbb{C}[z]_{k})^{1}$,
		see Theorem 10.5.1 in \cite{Mar}. 
	\end{proof}
	Here the isomorphism is induced from the map between the cotangent bundles
	$T^{*}(\iota_{0}\times \iota_{1})\colon T^{*}\mathrm{GL}_{n}(\mathbb{C}[z]_{k})\rightarrow T^{*}\mathrm{GL}_{n}(\mathbb{C})\times T^{*}\mathrm{GL}_{n}(\mathbb{C}[z]_{k})^{1}$
	associated to  the isomorphism given by the multiplication map
	$\iota_{0}\times \iota_{1}\colon \mathrm{GL}_{n}(\mathbb{C})\times \mathrm{GL}_{n}(\mathbb{C}[z]_{k})^{1}\rightarrow \mathrm{GL}_{n}(\mathbb{C}[z]_{k})$.
	This intermediate reduction space is called  
	the {\em extended orbit} of $H_{\mathrm{irr}}$.
	
	Let us explain that the truncated orbit $\mathbb{O}_{H}$ is a symplectic reduction of
	this extended orbit. 
	Since $\mathrm{GL}_{n}(\mathbb{C}[z]_{k})^{1}$ is a normal subgroup of $\mathrm{GL}_{n}(\mathbb{C}[z]_{k})$,
	the subgroup $\mathrm{GL}_{n}(\mathbb{C})\subset \mathrm{GL}_{n}(\mathbb{C}[z]_{k})$ acts on $\mathrm{GL}_{n}(\mathbb{C}[z]_{k})^{1}$
	by the adjoint action.
	Thus we can consider the induced action of $\mathrm{GL}_{n}(\mathbb{C})$ on $(\mathfrak{gl}_{n}(\mathbb{C}[z]_{k})^{1})^{*}$.
	We denote the stabilizer subgroup of $H_{\mathrm{irr}}\in (\mathfrak{gl}_{n}(\mathbb{C}[z]_{k})^{1})^{*}$
	in $\mathrm{GL}_{n}(\mathbb{C})$ by $\mathrm{GL}_{n}(\mathbb{C})_{H_{\mathrm{irr}}}$
	and put $\iota_{0,H_\mathrm{irr}}\colon \mathrm{GL}_{n}(\mathbb{C})_{H_{\mathrm{irr}}}\hookrightarrow \mathrm{GL}_{n}(\mathbb{C}[z]_{k})$,
	the inclusion map.
	Then we can define a map from $T^{*}\mathrm{GL}_{n}(\mathbb{C}[z]_{k})^{1}$ to $\mathfrak{gl}_{n}(\mathbb{C})_{H_{\mathrm{irr}}}^{*}$,
	the dual space of the Lie algebra $\mathfrak{gl}_{n}(\mathbb{C})_{H_{\mathrm{irr}}}$ of $\mathrm{GL}_{n}(\mathbb{C})_{H_{\mathrm{irr}}}$,
	as the composite map 
	\begin{multline*}
	\mu_{\mathrm{GL}_{n}(\mathbb{C}[z]_{k})^{1}}^{0}
	\colon T^{*}\mathrm{GL}_{n}(\mathbb{C}[z]_{k})^{1}\xrightarrow{T^{*}\pi_{1}} T^{*} \mathrm{GL}_{n}(\mathbb{C}[z]_{k})\\
	\xrightarrow{\mu_{\mathrm{GL}_{n}(\mathbb{C}[z]_{k})}}\mathfrak{gl}_{n}(\mathbb{C}[z]_{k})^{*}
	\xrightarrow{\iota_{0,H_\mathrm{irr}}^{*}}\mathfrak{gl}_{n}(\mathbb{C})_{H_{\mathrm{irr}}}^{*}.
	\end{multline*}
	Here $T^{*}\pi_{1}\colon T^{*}\mathrm{GL}_{n}(\mathbb{C}[z]_{k})^{1}\rightarrow  T^{*} \mathrm{GL}_{n}(\mathbb{C}[z]_{k})$
	is defined by $T^{*}\pi_{1}(\alpha):=(T^{*}\pi_{1})_{\iota_{1}(\theta(\alpha))}(\alpha)$ for $\alpha\in T^{*}\mathrm{GL}_{n}(\mathbb{C}[z]_{k})^{1}$.
	Then we can show the following.
	\begin{prp}
		The restriction of $\mu_{\mathrm{GL}_{n}(\mathbb{C}[z]_{k})^{1}}^{0}$
		on $\mu_{\mathrm{GL}_{n}(\mathbb{C}[z]_{k})^{1}}^{-1}(H_{\mathrm{irr}})$ factors 
		through the quotient map 
		\[
			q\colon \mu_{\mathrm{GL}_{n}(\mathbb{C}[z]_{k})^{1}}^{-1}(H_{\mathrm{irr}})\rightarrow 
		\mathbb{O}_{\mathrm{irr}}=\mathrm{GL}_{n}(\mathbb{C}[z]_{k})^{1}_{H_{\mathrm{irr}}}\backslash \mu_{\mathrm{GL}_{n}(\mathbb{C}[z]_{k})^{1}}^{-1}(H_{\mathrm{irr}}).
		\]
		Namely, there uniquely exists the map $\mathrm{res}_{\mathbb{O}_{H_{\mathrm{irr}}}}\colon 
		\mathbb{O}_{H_{\mathrm{irr}}}\rightarrow \mathfrak{gl}_{n}(\mathbb{C})_{H_{\mathrm{irr}}}^{*}$
		such that  the diagram 
		\[
		\begin{tikzcd}
			\mu_{\mathrm{GL}_{n}(\mathbb{C}[z]_{k})^{1}}^{-1}(H_{\mathrm{irr}}) \arrow[r,"\mu_{\mathrm{GL}_{n}(\mathbb{C}[z]_{k})^{1}}^{0}"] \arrow[d,"q"]&
			[3 em]\mathfrak{gl}_{n}(\mathbb{C})^{*}_{H_{\mathrm{irr}}}\\
			\mathbb{O}_{H_{\mathrm{irr}}}\arrow[ur, "\mathrm{res}_{\mathbb{O}_{H_{\mathrm{irr}}}}"']&
		\end{tikzcd}
	\]
	is commutative.
	\end{prp}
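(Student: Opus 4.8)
The map $\mu^0_{\mathrm{GL}_n(\mathbb{C}[z]_k)^1}$ should be thought of as the moment map for the residual $\mathrm{GL}_n(\mathbb{C})_{H_{\mathrm{irr}}}$-symmetry that survives on the reduced space $\mathbb{O}_{H_{\mathrm{irr}}}$, so the content of the proposition is that it is constant along the fibres of $q$. Since those fibres are exactly the orbits of the stabilizer $\mathrm{GL}_n(\mathbb{C}[z]_k)^1_{H_{\mathrm{irr}}}$ acting by the (restricted) action $\rho$, the plan is to prove this $\mathrm{GL}_n(\mathbb{C}[z]_k)^1_{H_{\mathrm{irr}}}$-invariance; a unique, and automatically holomorphic, factorization $\mathrm{res}_{\mathbb{O}_{H_{\mathrm{irr}}}}$ then exists because $q$ is a surjective holomorphic submersion. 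Throughout I abbreviate $G=\mathrm{GL}_n(\mathbb{C}[z]_k)$, $U=\mathrm{GL}_n(\mathbb{C}[z]_k)^1$, $K=\mathrm{GL}_n(\mathbb{C})$, with Lie algebras $\mathfrak{g}=\mathfrak{k}\oplus\mathfrak{u}$, where $\mathfrak{u}$ is an ideal.

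First I would record three compatibilities of the semidirect-product structure. (i) From $\pi_1(xu)=\pi_1(x)\,u$ for $u\in U$ the projection $\pi_1$ is equivariant for right translation, so $T^*\pi_1$ intertwines the lifts of $\rho$ on $T^*U$ and $T^*G$. (ii) In the convention $(\mathrm{Ad}^*_u\xi)(Y)=\xi(\mathrm{Ad}_uY)$, a direct computation from $\mu_G(\alpha)=L^*_{\theta(\alpha)}\alpha$ gives $\mu_G(\rho(u)\alpha)=\mathrm{Ad}^*_u\mu_G(\alpha)$ (the sign plays no role below). (iii) Because $\pi_1$ restricts to the identity on $U$, evaluating the left trivialization on $Y\in\mathfrak{u}$ yields $\iota_1^*\circ\mu_G\circ T^*\pi_1=\mu_{\mathrm{GL}_n(\mathbb{C}[z]_k)^1}$; hence for $\alpha$ in the level set the element $\xi:=\mu_G(T^*\pi_1\alpha)\in\mathfrak{g}^*$ has $\iota_1^*\xi=H_{\mathrm{irr}}$. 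Combining (i) and (ii) yields the transformation rule
\[
  \mu^0_{\mathrm{GL}_n(\mathbb{C}[z]_k)^1}(\rho(u)\alpha)=\iota_{0,H_{\mathrm{irr}}}^*\,\mathrm{Ad}^*_u\,\xi ,
\]
so that, pairing against $W\in\mathfrak{gl}_n(\mathbb{C})_{H_{\mathrm{irr}}}$, the desired invariance becomes the scalar identity $\langle\xi,\mathrm{Ad}_uW\rangle=\langle\xi,W\rangle$.

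The heart of the matter is then a short Lie-algebra computation. Fix $\alpha\in\mu_{\mathrm{GL}_n(\mathbb{C}[z]_k)^1}^{-1}(H_{\mathrm{irr}})$ and $W\in\mathfrak{gl}_n(\mathbb{C})_{H_{\mathrm{irr}}}$, take $X$ in the Lie algebra of $\mathrm{GL}_n(\mathbb{C}[z]_k)^1_{H_{\mathrm{irr}}}$, and set $g(s)=\langle\xi,\mathrm{Ad}_{\exp(sX)}W\rangle$ and $W_s=\mathrm{Ad}_{\exp(sX)}W$. Since $\mathfrak{u}$ is an ideal, $W_s-W\in\mathfrak{u}$ and $[X,W_s]\in\mathfrak{u}$, so (iii) lets me replace $\xi$ by $H_{\mathrm{irr}}$ on these elements:
\[
  g'(s)=\langle\xi,[X,W_s]\rangle=\langle H_{\mathrm{irr}},[X,W]\rangle+\langle H_{\mathrm{irr}},[X,W_s-W]\rangle .
\]
The first term vanishes because $W$ stabilizes $H_{\mathrm{irr}}$ under the $K$-action on $(\mathfrak{gl}_n(\mathbb{C}[z]_k)^1)^*$, i.e. $\langle H_{\mathrm{irr}},[W,\mathfrak{u}]\rangle=0$; the second vanishes because $X$ stabilizes $H_{\mathrm{irr}}$ under the $U$-coadjoint action, i.e. $\langle H_{\mathrm{irr}},[X,\mathfrak{u}]\rangle=0$. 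Thus $g'\equiv0$, whence $g(s)=g(0)=\langle\xi,W\rangle$, which is precisely the invariance sought.

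Finally I would pass from these one-parameter curves to the whole group: $\mathrm{GL}_n(\mathbb{C}[z]_k)^1_{H_{\mathrm{irr}}}$ is the coadjoint stabilizer of a point in the unipotent group $\mathrm{GL}_n(\mathbb{C}[z]_k)^1$, hence connected and generated by the subgroups $\exp(sX)$ above, each preserving the level set; constancy of $\mu^0_{\mathrm{GL}_n(\mathbb{C}[z]_k)^1}$ along every such curve gives invariance on each fibre of $q$, and the factorization follows. I expect the main obstacle to be bookkeeping rather than ideas: fixing the cotangent-lift conventions so that (i)--(iii) hold exactly as stated, and confirming connectedness of the stabilizer so that the one-parameter computation is enough. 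The one genuinely substantive step, the vanishing of $g'$, is immediate once the ideal property of $\mathfrak{u}$ is used to cut $\xi$ down to its $\mathfrak{u}^*$-component $H_{\mathrm{irr}}$ and the two stabilizer conditions are invoked in turn.
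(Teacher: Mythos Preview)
Your proposal is correct and follows the same reasoning as the paper: the paper's proof is the single sentence ``Since the restriction map $\mu_{\mathrm{GL}_{n}(\mathbb{C}[z]_{k})^{1}}^{0}|_{\mu_{\mathrm{GL}_{n}(\mathbb{C}[z]_{k})^{1}}^{-1}(H_{\mathrm{irr}})}$ is $\mathrm{GL}_{n}(\mathbb{C}[z]_{k})^{1}_{H_{\mathrm{irr}}}$-invariant, this map factors through the quotient map,'' and you have supplied the verification of that invariance which the paper leaves implicit. Your infinitesimal computation using the ideal property of $\mathfrak{u}$ and the two stabilizer conditions is a clean way to unpack what the author treats as a standard consequence of Hamiltonian reduction by stages for the semidirect product $\mathrm{GL}_n(\mathbb{C})\ltimes\mathrm{GL}_n(\mathbb{C}[z]_k)^1$.
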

	\begin{proof}
		Since the restriction map $\mu_{\mathrm{GL}_{n}(\mathbb{C}[z]_{k})^{1}}^{0}|_{\mu_{\mathrm{GL}_{n}(\mathbb{C}[z]_{k})^{1}}^{-1}(H_{\mathrm{irr}})}$
		is $\mathrm{GL}_{n}(\mathbb{C}[z]_{k})^{1}_{H_{\mathrm{irr}}}$-invariant,
		this map factors through the quotient map.
	\end{proof}
	Let $\iota_{H_{\mathrm{irr}}}\colon \mathrm{GL}_{n}(\mathbb{C})_{H_{\mathrm{irr}}}\hookrightarrow \mathrm{GL}_{n}(\mathbb{C})$
	be the inclusion map. Then Boalch showed that the truncated orbit $\mathbb{O}_{H}$ is a symplectic 
	reduction of the extended orbit $T^{*}\mathrm{GL}_{n}(\mathbb{C})\times \mathbb{O}_{H_{\mathrm{irr}}}$ as follows.
	\begin{thm}[Boalch \cite{Boa}]\label{thm:boalch}
		Let us consider the map 
		\[
			\begin{array}{cccc}	
		\mu_{\mathrm{ext}}\colon &	T^{*}\mathrm{GL}_{n}(\mathbb{C})\times \mathbb{O}_{H_{\mathrm{irr}}}&\longrightarrow 
			&\mathfrak{gl}_{n}(\mathbb{C})_{H_{\mathrm{irr}}}^{*}\\
			&(\alpha,\xi)&\longmapsto &\iota_{H_{\mathrm{irr}}}^{*}\circ \mu_{\mathrm{GL}_{n}(\mathbb{C})}(\alpha)+\mathrm{res}_{\mathbb{O}_{H_{\mathrm{irr}}}}(\xi)
			\end{array}.
		\]
		Then this map is a moment map with respect to the $\mathrm{GL}_{n}(\mathbb{C})_{H_{\mathrm{irr}}}$-action 
		which is induced from the natural $\mathrm{GL}_{n}(\mathbb{C})$-action on $T^{*}\mathrm{GL}_{n}(\mathbb{C})\times T^{*}\mathrm{GL}_{n}(\mathbb{C}[z]_{k})^{1}
		\cong T^{*}\mathrm{GL}_{n}(\mathbb{C}[z]_{k})$. Moreover setting $H_{\mathrm{res}}:=\iota_{0}^{*}(H)$,
		we have a symplectic isomorphism
		\[
			\mathbb{O}_{H}\cong (\mathrm{GL}_{n}(\mathbb{C})_{H_{\mathrm{irr}}})_{H_{\mathrm{res}}}\backslash \mu_{\mathrm{ext}}^{-1}(H_{\mathrm{res}}).
		\]
	\end{thm}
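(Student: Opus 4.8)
The plan is to realize the truncated orbit $\mathbb{O}_H$ as a coadjoint-orbit reduction of $G:=\mathrm{GL}_n(\mathbb{C}[z]_k)$ and then to perform this reduction in two stages along the semidirect decomposition $G=\mathrm{GL}_n(\mathbb{C})\ltimes\mathrm{GL}_n(\mathbb{C}[z]_k)^1$, recognizing the first stage as the extended orbit already built above. By the cotangent-bundle description of coadjoint orbits recalled earlier, there is a symplectic isomorphism $\mathbb{O}_H\cong G_H\backslash\mu_G^{-1}(H)$, so it suffices to compute the right-hand side by reduction by stages, applying the theorem of Marsden et al.\ (Theorem 10.5.1 in \cite{Mar}) to the normal subgroup $\mathrm{GL}_n(\mathbb{C}[z]_k)^1\subset G$.

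For the first stage I would reduce by the normal subgroup $\mathrm{GL}_n(\mathbb{C}[z]_k)^1$ at the value $H_{\mathrm{irr}}=\iota_1^*(H)$. The preceding Proposition identifies this reduced space $\mathrm{GL}_n(\mathbb{C}[z]_k)^1_{H_{\mathrm{irr}}}\backslash(\mu^1_G)^{-1}(H_{\mathrm{irr}})$ symplectically with the extended orbit $T^*\mathrm{GL}_n(\mathbb{C})\times\mathbb{O}_{H_{\mathrm{irr}}}$. Since $\mathrm{GL}_n(\mathbb{C}[z]_k)^1$ is normal, the quotient $\mathrm{GL}_n(\mathbb{C})$ acts on the stage-one reduced space, but reduction by stages forces the residual symmetry to be only the isotropy of the reduction value, namely the stabilizer $\mathrm{GL}_n(\mathbb{C})_{H_{\mathrm{irr}}}$.

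To prove the moment-map assertion I would then trace this residual action through the product $T^*\mathrm{GL}_n(\mathbb{C})\times\mathbb{O}_{H_{\mathrm{irr}}}$ and verify that it is diagonal: on the cotangent factor it is the lifted left action with equivariant moment map $\iota_{H_{\mathrm{irr}}}^*\circ\mu_{\mathrm{GL}_n(\mathbb{C})}$, while on $\mathbb{O}_{H_{\mathrm{irr}}}$ it is the adjoint action whose moment map is the map $\mathrm{res}_{\mathbb{O}_{H_{\mathrm{irr}}}}$ produced in the Proposition above. As the moment map of a diagonal action on a product is the sum of the component moment maps, the residual moment map equals $\mu_{\mathrm{ext}}$ exactly, establishing that $\mu_{\mathrm{ext}}$ is the moment map for the $\mathrm{GL}_n(\mathbb{C})_{H_{\mathrm{irr}}}$-action. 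Carrying out the second-stage reduction of the extended orbit by $\mathrm{GL}_n(\mathbb{C})_{H_{\mathrm{irr}}}$ at the restricted value $H_{\mathrm{res}}=\iota_0^*(H)$ and invoking reduction by stages then yields the symplectic isomorphism $\mathbb{O}_H\cong(\mathrm{GL}_n(\mathbb{C})_{H_{\mathrm{irr}}})_{H_{\mathrm{res}}}\backslash\mu_{\mathrm{ext}}^{-1}(H_{\mathrm{res}})$.

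The main obstacle I expect is the bookkeeping in the last two steps: one must check that the isomorphism of the earlier Proposition, induced by $T^*(\iota_0\times\iota_1)$, genuinely intertwines the residual $\mathrm{GL}_n(\mathbb{C})$-action on $\mathrm{GL}_n(\mathbb{C}[z]_k)^1_{H_{\mathrm{irr}}}\backslash(\mu^1_G)^{-1}(H_{\mathrm{irr}})$ with the diagonal product action, and that the two summands of $\mu_{\mathrm{ext}}$ are each $\mathrm{GL}_n(\mathbb{C})_{H_{\mathrm{irr}}}$-equivariant so that their sum qualifies as an equivariant moment map. Once these compatibilities and the identification of the residual symmetry group with the stabilizer $\mathrm{GL}_n(\mathbb{C})_{H_{\mathrm{irr}}}$ are secured, the cited reduction-by-stages theorem applies and the symplectic structures on both sides agree by construction.
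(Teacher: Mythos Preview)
The paper does not supply its own proof of this theorem: it is stated with attribution to Boalch \cite{Boa} and no proof environment follows. Your proposal is nonetheless the natural argument in the paper's framework, since it simply carries the Hamiltonian reduction by stages (Theorem 10.5.1 in \cite{Mar}) one step further than the immediately preceding Proposition, which already identified the stage-one quotient with the extended orbit. In that sense your approach is exactly what the paper's setup points to, and the outline is correct; the compatibilities you flag (intertwining of the residual $\mathrm{GL}_n(\mathbb{C})_{H_{\mathrm{irr}}}$-action under $T^{*}(\iota_0\times\iota_1)$ and equivariance of the two summands) are precisely the content of the cited reduction-by-stages theorem applied to a semidirect product, so no genuine gap remains.
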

	
	\subsection{Triangular factorization of $\mathbb{O}_{H_{\mathrm{irr}}}$}
	We explain
	the coadjoint orbit $\mathbb{O}_{H_{\mathrm{irr}}}$ has a nice affine coordinate system 
	which is very important to define a deformation of the truncated orbit in the latter section.
	
	For a subgroup $G\subset \mathrm{GL}_{n}(\mathbb{C}[z]_{l})$, we denote the 
	intersection $G\cap \mathrm{GL}_{n}(\mathbb{C}[z]_{l})^{1}$ by $G^{1}$.
	Also we denote the intersection $\mathfrak{gl}_{n}(\mathbb{C}[z]_{l})^{1}$ with 
	a Lie subalgebra $\mathfrak{g}\subset \mathfrak{gl}_{n}(\mathbb{C}[z]_{l})$ by $\mathfrak{g}^{1}$.
	
	\subsubsection{Compositions of a natural number and subgroups of $\mathrm{GL}_{n}(\mathbb{C})$}
	\label{sec:part}
	Let us recall compositions of natural numbers.
	Let $n$ and $k$ be strictly positive integers. Then we denote
	the set of {\em weak composition} of $n$ with $l$ terms by 
	\[
		\mathcal{C}(n,l):=\left\{
	(a_{1},a_{2},\ldots,a_{l})\in \mathbb{Z}_{\ge 0}^{l}\,\middle|\,
	a_{1}+a_{2}+\cdots+a_{l}=n
	\right\}
	\]
	where we allow some components $a_{i}$ to be zero.
	If all components in $(a_{1},a_{2},\ldots,a_{l})\in \mathcal{C}(n,l)$ are strictly positive integers, then it is called 
	{\em strict composition} or {\em composition} shortly and 
	we denote the set of strict composition of $n$ with $l$ terms 
	by $\mathcal{C}^{+}(n,l).$
	The sets of all weak compositions and strict compositions of $n$ are 
	denoted by
	\[
		\mathcal{C}(n):=\bigsqcup_{l\in \mathbb{Z}_{>0}}\mathcal{C}(n,l),\quad\quad
		\mathcal{C}^{+}(n):=\bigsqcup_{l\in \mathbb{Z}_{>0}}\mathcal{C}^{+}(n,l),
	\]
	respectively.
	
	Let us take $\mathbf{m}=\{n_{1},n_{2},\ldots,n_{m}\}\in 
	\mathcal{C}(n)$. Let $\mathbf{e}_{i}\in \mathbb{C}^{n}$ be the vector 
	with $1$ as the $i$-th entry and $0$ as the others for each $i=1,2,\ldots,n.$
	Then we define 
	inclusion maps
	\[
		\begin{array}{cccc}
			\iota_{i}\colon &\mathbb{C}^{n_{i}}&\longrightarrow &\mathbb{C}^{n}\\
				&(a_{1},a_{2},\ldots,a_{n_{i}})&\longmapsto&
				\displaystyle \sum_{j=n_{1}+n_{2}+\cdots +n_{i-1}+1}^{n_{1}+n_{2}+\cdots +n_{i}}a_{j}\mathbf{e}_{j}
		\end{array}
	\]
	for $i=1,2,\ldots,m$ and an isomorphism 
	\[
		\begin{array}{cccc}
			\iota_{\mathbf{m}}\colon &\bigoplus_{i=1}^{m}\mathbb{C}^{n_{i}}&\longrightarrow &\mathbb{C}^{n}\\
			&(\mathbf{a}_{i})_{i=1,2,\ldots,m}&\longmapsto&
			\sum_{i=1}^{m}\iota_{i}(\mathbf{a}_{i})
		\end{array},
	\]
	where if $n_{i}=0$, then we set $\iota_{i}$ as the zero map.
	We identify $\mathbb{C}^{n}$ and $\bigoplus_{i=1}^{m}\mathbb{C}^{n_{i}}$ through this isomorphism.
	
	Let us regard $\mathrm{Hom}_{\mathbb{C}}(\mathbb{C}^{n_{i}},\mathbb{C}^{n_{i'}})$ as a subspace
	of $\mathrm{End}_{\mathbb{C}}(\mathbb{C}^{n})=M_{n}(\mathbb{C})$ through 
	the canonical projection $\mathrm{pr}_{i}\colon \mathbb{C}^{n}\rightarrow \mathbb{C}^{n_{i}}$
	and injection $\iota_{i'}\colon \mathbb{C}^{n_{i'}}\rightarrow \mathbb{C}^{n}$.
	Then we define the following $\mathbb{C}$-subalgebras of $M_{n}(\mathbb{C})$, 
	\begin{align*}
		\mathfrak{l}_{\mathbf{m}}&:=\bigoplus_{i\in \{1,2,\ldots,m\}}
		\mathrm{Hom}_{\mathbb{C}}(\mathbb{C}^{n_{i}},\mathbb{C}^{n_{i}}),\\
		\mathfrak{n}^{+}_{\mathbf{m}}&:=\bigoplus_{\substack{i,i'\in \{1,2,\ldots,m\}\\ i< i'}}
		\mathrm{Hom}_{\mathbb{C}}(\mathbb{C}^{n_{i}},\mathbb{C}^{n_{i'}}),&
		\mathfrak{n}^{-}_{\mathbf{m}}&:=\bigoplus_{\substack{i,i'\in \{1,2,\ldots,m\}\\i > i'}}
		\mathrm{Hom}_{\mathbb{C}}(\mathbb{C}^{n_{i}},\mathbb{C}^{n_{i'}}),
	\end{align*}
	and regard them as the Lie subalgebras of $\mathfrak{gl}_{n}(\mathbb{C})=M_{n}(\mathbb{C})$.
	Let 
	 $L_{\mathbf{m}}$, $N^{+}_{\mathbf{m}}$, 
	and $N^{-}_{\mathbf{m}}$ be the corresponding 
	analytic subgroups of $\mathrm{GL}_{n}(\mathbb{C})$.

	\subsubsection{Sequences of compositions of positive integers}\label{sec:seqence}
	Let us take weak compositions $\alpha=(\alpha_{1},\alpha_{2},\ldots,\alpha_{l}),\,\beta=(\beta_{1},\beta_{2},\ldots,\beta_{m})\in \mathcal{C}(n)$.
	Then we say that $\beta$ is a {\em refinement} of $\alpha$ if there exists a surjection $\phi\colon 
	\{1,2,\ldots,m\}\rightarrow \{1,2,\ldots,l\}$ from the index set of $\beta$ to that of $\alpha$
	such that equations $\alpha_{i}=\sum_{j\in \phi^{-1}(i)}\beta_{j}$ hold for all $i=1,2,\ldots,l$.
	In this case we write $\beta\le \alpha$ or 
	\[
		\beta\le_{\phi}\alpha,	
	\]
	emphasizing the surjection $\phi$ which we call the {\em refinement map} between $\alpha$ and $\beta$.
	Let $\widetilde{\mathbf{m}}=(\mathbf{m}_{1},\mathbf{m}_{2},\ldots,\mathbf{m}_{k})$ be 
	an ascending sequence of compositions of $n$, i.e., entries of $\widetilde{\mathbf{m}}$ consist of the following 
	ascending sequence of refinement,
	\[
		\mathbf{m}_{1}\le_{\phi_{1}}\mathbf{m}_{2}\le_{\phi_{2}}
		\cdots\le_{\phi_{k-1}}\mathbf{m}_{k}.
	\]
	We write $\mathbf{m}_{i}=\{n_{\langle i,1\rangle},n_{\langle i,2\rangle},\ldots,n_{\langle i,m(i)\rangle}\}$
	and define a total ordering $\preceq_{i}$ on the index set $\{\langle i,1\rangle,\langle i,2\rangle,\ldots,\langle i,m(i)\rangle\}$ 
	of $\mathbf{m}_{i}$ for each $i=1,2,\ldots,k$.
	We say that the collection of orderings $\{\preceq_{i}\}_{i=1,2,\ldots,k}$ is the {\em ordering of the sequence $\widetilde{\mathbf{m}}$} if 
	\[
		\langle	i,j\rangle\preceq_{i}\langle i,j'\rangle\ \text{ imply }\ \phi_{i}(\langle	i,j\rangle)\preceq_{i+1}\phi_{i}(\langle	i,j'\rangle)
	\] 
	for any $\langle	i,j\rangle,\langle i,j'\rangle\in \{\langle i,1\rangle,\langle i,2\rangle,\ldots,\langle i,m(i)\rangle\}$
	and $i=1,2,\ldots,k-1$.
	
	Let us fix a surjection $\phi^{1}\colon \{1,2,\ldots,n\}\rightarrow \{\langle 1,1\rangle,\,\langle 1,2\rangle\,
	\ldots,\,\langle 1,m(1)\rangle\}$ and 
	define the maps $\phi^{i}\colon \{1,2,\ldots,n\}\rightarrow \{\langle i,1\rangle,\,\langle i,2\rangle\,
	\ldots,\,\langle i,m(i)\rangle\}$ by 
	$\phi^{i}:=\phi_{i-1}\circ \phi_{i-2}\circ \cdots \circ\phi_{1}\circ \phi^{1}$ for $i=2,3,\ldots,k$.
	Then we can 
	regard the vector space $\mathbb{C}^{n_{\langle i,j\rangle}}$
	as the subspace of $\mathbb{C}^{n}$ by the equation
	\[
		\mathbb{C}^{n_{\langle i,j\rangle}}=\bigoplus_{k\in (\phi^{i})^{-1}(j)}\mathbb{C}\mathbf{e}_{k},	
	\]
	for each $i=1,2,\ldots, k$ and $j=1,2,\ldots, m(i)$. 
	Then we obtain the decompositions 
	\[
		\mathbb{C}^{n}=	\bigoplus_{j=1}^{m(i)}\mathbb{C}^{n_{\langle i,j\rangle}}
	\]
	for $i=1,2,\ldots,k$ associated to $\widetilde{\mathbf{m}}$.
	
	As we saw in the previous section, a composition of $n$ defines the block decomposition of $\mathfrak{gl}_{n}(\mathbb{C})=M_{n}(\mathbb{C})$.
	Therefore a pair of a sequence $\widetilde{\mathbf{m}}$ of compositions and an ordering $\{\preceq_{i}\}_{i=1,2,\ldots,k}$ of $\widetilde{\mathbf{m}}$ defines 
	the following Lie subalgebras of $\mathfrak{gl}_{n}(\mathbb{C})$ with 
	inclusion relations
	\begin{equation}\label{eq:inclusion}
	\begin{split}
		&\mathfrak{l}_{\mathbf{m}_1}\subset \mathfrak{l}_{\mathbf{m}_2}\subset \cdots\subset \mathfrak{l}_{\mathbf{m}_{k}},\\
		&\mathfrak{n}_{\mathbf{m}_{k}}^{\pm}\subset \cdots\subset \mathfrak{n}_{\mathbf{m}_2}^{\pm}\subset \mathfrak{n}_{\mathbf{m}_1}^{\pm}.
	\end{split}
	\end{equation}
	Moreover 
	we define 
	\[
		\mathfrak{n}^{\pm}_{\mathbf{m}_{i}^{i'}}:=\mathfrak{l}_{\mathbf{m}_{i'}}\cap \mathfrak{n}_{\mathbf{m}_{i}},	
	\]
	for pairs $(i,i')$ satisfying $1\le i<i'\le k+1$,
	which give us further decompositions 
	\[
		\mathfrak{l}_{\mathbf{m}_{i'}}=\mathfrak{n}^{-}_{\mathbf{m}_{i}^{i'}}\oplus\mathfrak{l}_{\mathbf{m}_{i}}
		\oplus	\mathfrak{n}^{+}_{\mathbf{m}_{i}^{i'}}.
	\]
	Here we formally put $\mathfrak{l}_{\mathbf{m}_{k+1}}=\mathfrak{gl}_{n}(\mathbb{C})$.
	Throughout this note, when we consider a sequence $\widetilde{\mathbf{m}}$ of compositions,
	we always fix an ordering $\{\preceq_{i}\}_{i=1,2,\ldots,k}$ of $\widetilde{\mathbf{m}}$.
	
	\subsubsection{Spectral types of HTL-normal forms}\label{sec:spec}
	We shall introduce spectral types of HTL normal forms.
	As the easiest case, let us first consider a square matrix
	$X\in M_{n}(\mathbb{C})$ and 
	define the spectral type of $X$ as follows.
	Take the Jordan decomposition of $X$,
	\[
		X=S+N
	\]
	with the diagonalizable matrix $S$ and the nilpotent matrix $N$ satisfying the relation $[S,N]=0$.
	Then we have the decomposition $\mathbb{C}^{n}=\bigoplus_{i=1}^{m}V_{i}$
	as the direct sum of eigenspaces of $S$ which defines a composition of $n$,
	$$\mathbf{m}:=(\mathrm{dim}_{\mathbb{C}}V_{1},\mathrm{dim}_{\mathbb{C}}V_{2},\ldots,\mathrm{dim}_{\mathbb{C}}V_{m}).$$
	Let us look at the nilpotent matrix $N$ and notice that
	the restriction $N_{i}:=N|_{V_{i}}$ on $V_{i}$ defines a nilpotent
	endomorphism of $V_{i}$ for each $i=1,2,\ldots,m$ 
	since $[S,N]=0$.
	Then we have the partial flag $\mathcal{F}_{N;V_{i}}$,
	\[
		V_{i}\supsetneqq (V_{i})_{1}:=\mathrm{Im}N_{i}\supsetneqq (V_{i})_{2}:=\mathrm{Im}N_{i}^{2}\supsetneqq
		\cdots\supsetneqq (V_{i})_{d_{i}}:=	\mathrm{Im}N_{i}^{d_{i}}=\{0\},
	\]
	and descending sequence of positive integers
	\[
		\mathrm{dim}_{\mathbb{C}}V_{i}> \mathrm{dim}_{\mathbb{C}}(V_{i})_{1}> \mathrm{dim}_{\mathbb{C}}(V_{i})_{2}>
		\cdots > \mathrm{dim}_{\mathbb{C}}(V_{i})_{d_{i}}=0.
	\]
	We call the sequence of the integers 
	\[
		\sigma(\mathcal{F}_{N;V_{i}}):=(\mathrm{dim}_{\mathbb{C}}V_{i}, \mathrm{dim}_{\mathbb{C}}(V_{i})_{1}, \mathrm{dim}_{\mathbb{C}}(V_{i})_{2},
	\ldots,\mathrm{dim}_{\mathbb{C}}(V_{i})_{d_{i}-1})	
	\]
	the {\em signature} of $\mathcal{F}_{N;V_{i}}$.
	 We denote the set of the signatures by 
	 \[
		\sigma({\mathbf{m}}):=\{\sigma(\mathcal{F}_{N;V_{i}})\}_{i=1,2,\ldots,m} 
	 \]
	 and call it the {\em signature} of $\mathbf{m}$.

	\begin{dfn}[spectral type of a square matrix]
		Let $X\in M_{n}(\mathbb{C})$ be a square matrix with the
		Jordan decomposition $X=S+N$.
		Then we can define the composition $\mathbf{m}$ of $n$ and the signature $\sigma(\mathbf{m})$ of $\mathbf{m}$ as above.
		we call the pair
		\[
			(\mathbf{m},\sigma(\mathbf{m}))	
		\]
		the {\em spectral type} of $X$.
		In particular, if $X=S$, i.e., $X$ is a semisimple matrix, then the signature
		$\sigma(\mathbf{m})$ is trivial. In this case, we  
		simply denote the spectral type by 
		$(\mathbf{m},\mathrm{triv}).$
	\end{dfn}
	
	Next we consider an unramified HTL-normal form $\displaystyle
		\left(\frac{S_{k}}{z^{k}}+\cdots+\frac{S_{1}}{z}+S_{0}+N_{0}\right)\frac{dz}{z}.$
	Then we obtain the decomposition $\mathbb{C}^{n}=\bigoplus_{j=1}^{m(i)}V_{\langle i;j\rangle}$
	as the direct sum of simultaneous eigenspaces of $(S_{i},S_{i+1},\ldots,S_{k})$
	which defines the following composition of $n$,
	\[
		\mathbf{m}_{i}:=(\mathrm{dim\,}_{\mathbb{C}}V_{\langle i;1 \rangle},\mathrm{dim\,}_{\mathbb{C}}V_{\langle i;2\rangle},
		\ldots,\mathrm{dim\,}_{\mathbb{C}}V_{\langle i;m(i)\rangle}),
	\]
	for each $i=0,1,\ldots,k$.
	Also for each  $i=0,1,\ldots,k-1$, let us define the map 
	\[
	\phi_{i}\colon\{\langle i,1\rangle ,\langle i,2\rangle,\ldots,\langle i,m(i)\rangle \}\rightarrow 
	\{\langle i+1,1\rangle,\langle i+1,2\rangle,\ldots,\langle i+1,m(i+1)\rangle\}
	\]
	between index sets of $\mathbf{m}_{i}$ and $\mathbf{m}_{i+1}$ which satisfies
	that
	\[
		V_{\langle i+1;j\rangle}=\bigoplus_{\mu\in\phi^{-1}_{i}(j)}
		V_{\langle i;\mu\rangle}
	\]
	for all $j\in \{1,2,\ldots,m(i)\}$.
	Then we obtain a refinement sequence of partitions
	\[
		\mathbf{m}_{0}\le_{\phi_{0}}\le  \mathbf{m}_{1}\le_{\phi_{1}} \cdots \le_{\phi_{k-1}}\mathbf{m}_{k}.
	\]
	
	The restrictions of nilpotent matrices 
	$N_{j}:=N_{0}|_{V_{\langle 0;j\rangle}}\in \mathrm{End}_{\mathbb{C}}(V_{\langle 0,j\rangle})$
	define the flags $\mathcal{F}(N_{0},V_{\langle 0;j\rangle})$ of $V_{\langle 0;j\rangle}$,
	\[
		V_{\langle 0;j\rangle}\supsetneqq (V_{\langle 0;j\rangle})_{1}:=
		\mathrm{Im}N_{j}\supsetneqq 
		(V_{\langle 0;j\rangle})_{2}:=
		\mathrm{Im}N_{j}^{2}\supsetneqq
		\cdots \supsetneqq 
		(V_{\langle 0;j\rangle})_{d_{j}}
		:=
		\mathrm{Im}N_{j}^{d_{j}}=\{0\},
	\]
	with the signature
	\[
		\sigma(\mathcal{F}(N_{0},V_{\langle 0;j\rangle}))=
		(\mathrm{dim}_{\mathbb{C}}V_{\langle 0;j\rangle},\mathrm{dim}_{\mathbb{C}}(V_{\langle 0;j\rangle})_{1},
		\mathrm{dim}_{\mathbb{C}}(V_{\langle 0;j\rangle})_{2},
		\ldots,
		\mathrm{dim}_{\mathbb{C}}(V_{\langle 0;j\rangle})_{d_{j}-1}).
	\]
	Also we set 
	\[
		\sigma(\mathbf{m}_{0}):=(\sigma(\mathcal{F}(N_{0},V_{\langle 0;j\rangle})))_{j=1,2,\ldots,m(0)}.	
	\]

	\begin{dfn}[spectral types of HTL normal forms]\normalfont
		The pair 
		\[
			\mathrm{sp}(H):=(\mathbf{m}_{0}\le_{\phi_{0}}\le  \mathbf{m}_{1}\le_{\phi_{1}} \cdots \le_{\phi_{k-1}}\mathbf{m}_{k}, \sigma(\mathbf{m}_{0}))
		\]
		of the 
		refinement sequence of partitions of $n$ and 
		 signature $\sigma(\mathbf{m}_{0})$ of $\mathbf{m}_{0}$ defined above
		 is called the {\em spectral type} of the HTL-normal form
		\[
			H=\left(\frac{S_{k}}{z^{k}}+\cdots+\frac{S_{1}}{z}+S_{0}+N_{0}\right)\frac{dz}{z}.
		\]
		When $N_{0}=0$, the signature $\sigma(\mathbf{m}_{0})$ is trivial.
		Thus we denote the spectral type by 
		\[
			(\mathbf{m}_{0}\le_{\phi_{0}}\le  \mathbf{m}_{1}\le_{\phi_{1}} \cdots \le_{\phi_{k-1}}\mathbf{m}_{k}, \mathrm{triv})
		\]
		in this case.
	\end{dfn}
	
	\subsubsection{Triangular factrization of $\mathbb{O}_{H_{\mathrm{irr}}}$}
	Firstly, we recall a general fact about coadjoint orbits.
	It is well-known that a coadjoint orbit 
	$\mathbb{O}_{\xi}$ of a complex Lie group $G$ through $\xi 
	\in \mathfrak{g}^{*}$ is a immersed submanifold
	of $\mathfrak{g}^{*}$ via the injective immersion 
	\[
		\mathbb{O}_{\xi}\cong G/G_{\xi}\ni [g]\longmapsto \mathrm{Ad}^{*}(g)(\xi)\in \mathfrak{g}^{*}, 	
	\]
	where $[g]\in G/G_{\xi}$ is the class of $g\in G$.
	Under the identification $\mathbb{O}_{\xi}\cong G_{\xi}\backslash \mu_{G}^{-1}(\xi)$,
	this injective immersion is written as follows.
	Let us consider the map 
	\[
		\nu_{G}\colon T^{*}G\ni \alpha \longmapsto R^{*}_{\theta(\alpha)}(\alpha)\in \mathfrak{g}^{*},	
	\]
	where $\theta\colon  T^{*}G\rightarrow G$ is the natural projection.
	Then we note that this map is invariant under
	the action $\rho$ of $G$ on $T^{*}G$, i.e., we have $\nu_{G}\circ \rho(g)=\nu_{G}$
	for all $g\in G$. Therefore the restriction map
	$\nu_{G}\colon \mu_{G}^{-1}(\xi)\rightarrow \mathfrak{g}^{*}$ factors through 
	the quotient map $q\colon \mu_{G}^{-1}(\xi)\rightarrow \mathbb{O}_{\xi}$
	and there uniquely exists the map $\mathbb{O}_{\xi}\rightarrow \mathfrak{g}^{*}$ such 
	that the diagram 
	\[
		\begin{tikzcd}
			\mu_{G}^{-1}(\xi) \arrow[r,"\nu_{G}"] \arrow[d,"q"]&
			\mathfrak{g}^{*}\\
			\mathbb{O}_{\xi}\arrow[ur, ]&
		\end{tikzcd}
	\]
	is commutative. Then we can check that this map $\mathbb{O}_{\xi}\rightarrow \mathfrak{g}^{*}$ coincides with 
	the above injective immersion.
	
	Let us consider the subgroups $(N^{-})^{(l)}:=
	N_{\mathbf{m}_{l}^{l+1}}^{-}(\mathbb{C}[z]_{l-1})^{1}\subset 
	\mathrm{GL}_{n}(\mathbb{C}[z]_{l-1})^{1}$ for $l=2,3,\ldots,k$.
	Then we recall that $\mathrm{GL}_{n}(\mathbb{C}[z]_{l-1})^{1}$ admits the LU decompositions.
	\begin{prp}[LU decomposition]
		Let $\mathbf{m}$ be a composition of $n$. Then for a positive integer $j$,
		the multiplication gives the isomorphism  as complex manifolds,
		\[
			N_{\mathbf{m}}^{-}(\mathbb{C}[z]_{j})^{1}\times L_{\mathbf{m}}(\mathbb{C}[z]_{j})\times 	N_{\mathbf{m}}^{+}(\mathbb{C}[z]_{j})^{1}
			\ni (n^{-},h,n^{+})\longmapsto n^{-}\cdot h\cdot n^{+}\in \mathrm{GL}_{n}(\mathbb{C}[z]_{j})^{1}.
		\]
	\end{prp}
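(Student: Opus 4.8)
The plan is to exploit the fact that every element of the congruence subgroup $\mathrm{GL}_n(\mathbb{C}[z]_j)^1$ has constant term $E_n$, so that the usual obstruction to a Gauss (LU) factorization in $\mathrm{GL}_n$---the non-vanishing of the leading block minors, i.e.\ landing in the big cell---is automatically satisfied: every such element reduces to $E_n$ modulo $z$, whose minors are all $1$. Consequently the factorization should hold on the whole congruence subgroup, not merely on an open cell. Concretely, I would prove that the multiplication map $m\colon N^-_{\mathbf{m}}(\mathbb{C}[z]_j)^1\times L_{\mathbf{m}}(\mathbb{C}[z]_j)\times N^+_{\mathbf{m}}(\mathbb{C}[z]_j)^1\to \mathrm{GL}_n(\mathbb{C}[z]_j)^1$ is bijective with a polynomial inverse, noting that membership in $\mathrm{GL}_n(\mathbb{C}[z]_j)^1$ forces the constant term of the middle factor to be $E_n$ as well (so that $h(0)=E_n$). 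The only linear-algebraic input is the block decomposition $\mathfrak{gl}_n(\mathbb{C})=\mathfrak{n}^-_{\mathbf{m}}\oplus\mathfrak{l}_{\mathbf{m}}\oplus\mathfrak{n}^+_{\mathbf{m}}$, valid for an arbitrary composition $\mathbf{m}$, so the argument is uniform in $\mathbf{m}$.

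The engine is induction on the truncation length $j$ along the $z$-adic filtration $U_i:=\{g\in\mathrm{GL}_n(\mathbb{C}[z]_j)\mid g\equiv E_n \bmod z^i\}$, with $U_1=\mathrm{GL}_n(\mathbb{C}[z]_j)^1$ and $U_{j+1}=\{E_n\}$. The key elementary observation is that $U_j$ is central and abelian in $U_1$: for $X\in M_n(\mathbb{C})$ and $v\in U_1$ one has $(E_n+z^jX)v=v+z^jX=v(E_n+z^jX)$ since $z^{j+1}=0$ in $\mathbb{C}[z]_j$, and likewise $(E_n+z^jX^-)(E_n+z^jX^0)(E_n+z^jX^+)=E_n+z^j(X^-+X^0+X^+)$. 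For existence, given $g\in U_1$ I reduce modulo $z^j$ and factor $\bar g=\bar n^-\bar h\bar n^+$ by the inductive hypothesis; I then lift each factor arbitrarily (via $\exp\circ\log$, padding the $z^j$-coefficient by $0$) to $n^-_0,h_0,n^+_0$ in the respective subgroups of $U_1$. Then $g(n^-_0h_0n^+_0)^{-1}=E_n+z^jX\in U_j$; splitting $X=X^-+X^0+X^+$ along the block decomposition and using centrality reassembles $g=\big((E_n+z^jX^-)n^-_0\big)\big((E_n+z^jX^0)h_0\big)\big((E_n+z^jX^+)n^+_0\big)$, whose three bracketed factors lie in $N^-_{\mathbf{m}}(\mathbb{C}[z]_j)^1$, $L_{\mathbf{m}}(\mathbb{C}[z]_j)^1$, $N^+_{\mathbf{m}}(\mathbb{C}[z]_j)^1$ respectively. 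The base case $j=0$ is trivial, since $U_1=\{E_n\}$ there.

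Uniqueness runs along the same induction. If $n^-_1h_1n^+_1=n^-_2h_2n^+_2$, the inductive hypothesis gives agreement modulo $z^j$, so $n^-_1=n^-_2(E_n+z^jY^-)$, $h_1=h_2(E_n+z^jY^0)$, $n^+_1=n^+_2(E_n+z^jY^+)$ with $Y^-\in\mathfrak{n}^-_{\mathbf{m}}$, $Y^0\in\mathfrak{l}_{\mathbf{m}}$, $Y^+\in\mathfrak{n}^+_{\mathbf{m}}$; here one uses that the $z^j$-coefficient of any element of $N^\pm_{\mathbf{m}}(\mathbb{C}[z]_j)^1$ lies in $\mathfrak{n}^\pm_{\mathbf{m}}$, which follows from $\exp$ and nilpotency. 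Centrality of $U_j$ then collapses the relation to $E_n=E_n+z^j(Y^-+Y^0+Y^+)$, and directness of the block decomposition forces $Y^-=Y^0=Y^+=0$. To upgrade bijectivity to an isomorphism of complex manifolds, I would observe that each step of the inductive construction of $m^{-1}$ uses only polynomial operations---reduction modulo a power of $z$, multiplication and inversion inside the unipotent group $U_1$ (where inversion is a finite Neumann sum), extraction of the $z^j$-coefficient, and the linear block projections---so $m^{-1}$ is itself a morphism of affine varieties; hence $m$ is biregular and \emph{a fortiori} biholomorphic. Equivalently, one could note that $m$ is a bijective morphism of smooth varieties whose differential is everywhere the block decomposition isomorphism and invoke the holomorphic inverse function theorem, but the explicit polynomial inverse makes this automatic.

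I expect the main obstacle to be organizational rather than conceptual: keeping careful track of the non-commutativity of the three subgroups when reassembling the factors, which is exactly what the centrality of the top filtration step $U_j$ is designed to neutralize, together with verifying that each lifted and reassembled factor genuinely lands in the intended subgroup (i.e.\ that the $z^j$-coefficients of elements of $N^\pm_{\mathbf{m}}(\mathbb{C}[z]_j)^1$ and $L_{\mathbf{m}}(\mathbb{C}[z]_j)^1$ sit in the corresponding summands of $\mathfrak{gl}_n(\mathbb{C})$). No genuine analytic difficulty arises, precisely because the congruence condition removes the big-cell restriction that obstructs the global LU factorization in $\mathrm{GL}_n(\mathbb{C})$ itself.
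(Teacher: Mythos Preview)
Your proof is correct, but the route is genuinely different from the paper's. The paper dispatches the statement in one line: since every $g\in\mathrm{GL}_n(\mathbb{C}[z]_j)^1$ satisfies $g\equiv E_n\bmod z$, each leading principal block minor of $g$ has constant term $1$ and is therefore a unit in the local ring $\mathbb{C}[z]_j$; hence the ordinary Gaussian elimination / LU algorithm over a commutative ring runs without obstruction and produces the three factors directly (with the inverse manifestly polynomial because the pivots one divides by are units with polynomial inverses). You observe exactly the same underlying reason in your opening paragraph, but then you prove the factorization by a different mechanism: a $z$-adic induction along the central series $U_i$, exploiting that the top graded piece $U_j\cong M_n(\mathbb{C})$ is central in $U_1$ so the triangular defect at each step splits additively along $\mathfrak{n}^-_{\mathbf m}\oplus\mathfrak{l}_{\mathbf m}\oplus\mathfrak{n}^+_{\mathbf m}$.

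What each buys: the paper's argument is shorter and makes clear that nothing beyond classical LU over a ring is needed. Your argument is more intrinsic to the group structure, avoids any explicit reference to minors, and transparently yields a polynomial inverse step by step; it also generalizes verbatim to any triangular decomposition of a Lie algebra inside a pro-unipotent congruence subgroup, not just the block case. Your side remark that the map as written only lands in $\mathrm{GL}_n(\mathbb{C}[z]_j)^1$ when $h(0)=E_n$ is well taken; the middle factor is effectively $L_{\mathbf m}(\mathbb{C}[z]_j)^1$, and your proof handles exactly that.
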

	\begin{proof}
		Since any principal minors of $g\in \mathrm{GL}_{n}(\mathbb{C}[z]_{j})^{1}$ 
		are units in $\mathbb{C}[z]_{j}$, thus 
		the usual LU decomposition algorithm is valid in this setting.
	\end{proof}
	Thus we obtain 
	the projection
	$\mathrm{pr}_{(N^{-})^{(l)}}\colon \mathrm{GL}_{n}(\mathbb{C}[z]_{l-1})^{1}\rightarrow (N^{-})^{(l)}$
	along the LU decomposition. 
	Regarding the projection $\mathrm{pr}^{k}_{l-1}\colon \mathbb{C}[z]_{k}\rightarrow \mathbb{C}[z]_{l-1}$
	as a map of $\mathbb{C}$-vector spaces,
	we can define a section 
	$s^{k}_{l-1}\colon \mathbb{C}[z]_{l-1}\ni \sum_{i=0}^{l-1}c_{i}z^{i}\mapsto \sum_{i=0}^{l-1}c_{i}z^{i}
	\in \mathbb{C}[z]_{k}$.
	Then we define the projection 
	\[
		\pi_{(N^{-})^{(l)}}:=\mathrm{pr}_{(N^{-})^{(l)}}\circ \mathrm{pr}^{k}_{l-1}
		\colon \mathrm{GL}_{n}(\mathbb{C}[z]_{k})^{1}\rightarrow (N^{-})^{(l)}.
	\]
	Also 
	the inclusion map $i_{(N^{-})^{(l)}}\colon (N^{-})^{(l)}
	\hookrightarrow \mathrm{GL}_{n}(\mathbb{C}[z]_{l})^{1}$ defines a section 
	\[
		\iota_{_{(N^{-})^{(l)}}}:=s^{k}_{l-1}\circ i_{(N^{-})^{(l)}}\colon
		(N^{-})^{(l)}
	\hookrightarrow \mathrm{GL}_{n}(\mathbb{C}[z]_{k})^{1} 
	\]	
	of the projection $\pi_{(N^{-})^{(l)}}$. 
	By using these projection and section, we obtain a map between cotangent bundles,
	\[
		T^{*}\pi_{(N^{-})^{(l)}}\colon T^{*}(N^{-})^{(l)}\ni 
		\alpha\mapsto (T^{*}\pi_{1,(N^{-})^{(l)}})_{\iota_{(N^{-})^{(l)}}(\theta(\alpha))}(\alpha)
		\in T^{*}\mathrm{GL}_{n}(\mathbb{C}[z]_{k})^{1}.
	\]
	
	For $H_{\mathrm{irr}}\in (\mathfrak{gl}_{n}(\mathbb{C}[z]_{k})^{1})^{*}$, we 
	set $\omega_{H_{\mathrm{irr}}}:=(L^{*}_{g^{-1}}(H_{\mathrm{irr}}))_{g\in \mathrm{GL}_{n}(\mathbb{C}[z]_{k})^{1}}$,
	the left invariant $1$-form on $\mathrm{GL}_{n}(\mathbb{C}[z]_{k})^{1}$
	associated to $H_{\mathrm{irr}}$.
	Define a map $F\colon \prod_{l=2}^{k}T^{*}(N^{-})^{(l)}\rightarrow T^{*}\mathrm{GL}_{n}(\mathbb{C}[z]_{k})^{1}$
	by 
	\[
		F((\alpha_{l})_{l=2,3,\ldots,k}):=\omega_{H_{\mathrm{irr}},\theta(\alpha)^{[2;k]}}
		+\sum_{l=2}^{k}L^{*}_{(\theta(\alpha)^{[l+1;k]})^{-1}}\circ R^{*}_{(\theta(\alpha)^{[2;l-1]})^{-1}}\circ T^{*}\pi_{(N^{-})^{(l)}}
		(\alpha_{l})
	\]
	for $(\alpha_{l})_{l=2,3,\ldots,k}\in \prod_{l=2}^{k}T^{*}(N^{-})^{(l)}$, 
	where we put
	\[
		\theta(\alpha)^{[i;j]}:=\begin{cases}
			\theta(\alpha_{j})\theta(\alpha_{j-1})\cdots \theta(\alpha_{i})&i\le j\\
			e&i>j
		\end{cases}.
	\]	
	The next theorem shows that $T^{*}(N^{-})^{(l)}$ defines an affine coordinate system of $\mathbb{O}_{H_{\mathrm{irr}}}$.
	\begin{thm}[Yamakawa \cite{Yam2}, cf. Hiroe-Yamakawa \cite{HY}]\label{thm:stepwise}
		The map $$\nu_{\mathrm{GL}_{n}(\mathbb{C}[z]_{k})^{1}}\circ F\colon \prod_{l=2}^{k}T^{*}(N^{-})^{(l)}
		\rightarrow (\mathfrak{gl}_{n}(\mathbb{C}[z]_{k})^{1})^{*}$$ gives a symplectic isomorphism 
		\[
			\prod_{l=2}^{k}T^{*}(N^{-})^{(l)}\cong \mathbb{O}_{H_{\mathrm{irr}}}.
		\]
	\end{thm}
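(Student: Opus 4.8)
The plan is to show that $\Psi:=\nu_{G}\circ F$ (writing $G:=\mathrm{GL}_{n}(\mathbb{C}[z]_{k})^{1}$ for brevity) is a symplectic biholomorphism from $\prod_{l=2}^{k}T^{*}(N^{-})^{(l)}$, with the product of the canonical symplectic forms, onto the coadjoint orbit $\mathbb{O}_{H_{\mathrm{irr}}}\subset \mathfrak{g}^{*}$ with its Kirillov--Kostant--Souriau form. I would split the argument into a set-theoretic part (bijectivity onto $\mathbb{O}_{H_{\mathrm{irr}}}$) and a differential-geometric part (matching of the two forms), and run both by induction on $k$, peeling off the deepest level, i.e.\ using the extension
\[
	1\longrightarrow U\longrightarrow \mathrm{GL}_{n}(\mathbb{C}[z]_{k})^{1}\longrightarrow \mathrm{GL}_{n}(\mathbb{C}[z]_{k-1})^{1}\longrightarrow 1,
\]
where $U=\{g\in G\mid g\equiv E_{n}\bmod z^{k}\}$ is the abelian congruence subgroup. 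This is the ``stepwise'' structure reflected in the summation over $l$ defining $F$.

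First I would record the bookkeeping behind $F$. Using the cocycle identity $\theta(\alpha)^{[l+1;k]}\cdot\theta(\alpha_{l})\cdot\theta(\alpha)^{[2;l-1]}=\theta(\alpha)^{[2;k]}$, every summand of $F(\vec{\alpha})$ is a cotangent vector based at the single point $g:=\theta(\alpha)^{[2;k]}=\theta(\alpha_{k})\cdots\theta(\alpha_{2})\in G$; hence $F$ is a well-defined section of $T^{*}G$ over the image of the multiplication map $\prod_{l}(N^{-})^{(l)}\to G$. Applying $\nu_{G}$, the leading term $\omega_{H_{\mathrm{irr}},g}$ produces the orbit point $\mathrm{Ad}^{*}(g)H_{\mathrm{irr}}$, while each correction term contributes a covector supported, through the cotangent lift $T^{*}\pi_{(N^{-})^{(l)}}$, in the $\mathfrak{n}^{-}_{\mathbf{m}_{l}^{l+1}}$-directions and transported by $\mathrm{Ad}^{*}$ of the partial product $\theta(\alpha)^{[l+1;k]}$. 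The point of the triangular factorization is precisely that these lower-triangular contributions fill out the remaining coordinates of $\mathbb{O}_{H_{\mathrm{irr}}}$.

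For bijectivity I would invert $\Psi$ using the LU decomposition. Given $\xi\in\mathbb{O}_{H_{\mathrm{irr}}}$, write $\xi=\mathrm{Ad}^{*}(g)H_{\mathrm{irr}}$; the class of $g$ modulo the stabilizer $G_{H_{\mathrm{irr}}}$, a parabolic-type subgroup respecting the whole filtration (\ref{eq:inclusion}), is determined by $\xi$, and the LU decomposition together with the compatibility of the ordering $\{\preceq_{i}\}$ with the refinement maps $\phi_{i}$ singles out a canonical representative in $\prod_{l}(N^{-})^{(l)}$. Reading off the successive LU projections $\mathrm{pr}_{(N^{-})^{(l)}}$ recovers the base points $\theta(\alpha_{l})$, and the transverse components of $\xi$ in the $\mathfrak{n}^{-}_{\mathbf{m}_{l}^{l+1}}$-blocks recover the cotangent data $\alpha_{l}$. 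This produces a two-sided inverse, and the count $\dim_{\mathbb{C}}\mathbb{O}_{H_{\mathrm{irr}}}=2\sum_{l=2}^{k}\dim_{\mathbb{C}}(N^{-})^{(l)}$, which follows from the block sizes encoded in $\widetilde{\mathbf{m}}$, confirms that $\Psi$ is onto with no dimension defect.

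The heart of the proof, and the step I expect to be the main obstacle, is the symplectic assertion. Recall that $\nu_{G}$ is the momentum map for the right $G$-action on $T^{*}G$ and is a Poisson map onto $\mathfrak{g}^{*}$ with its Lie--Poisson structure, whose symplectic leaves are the coadjoint orbits; so it suffices to exhibit $F$ as a symplectic embedding of $\prod_{l}T^{*}(N^{-})^{(l)}$ whose image is transverse to the $\nu_{G}$-fibres over $\mathbb{O}_{H_{\mathrm{irr}}}$, and then to identify $\Psi^{*}\omega_{\mathrm{KKS}}=\sum_{l}\Omega_{T^{*}(N^{-})^{(l)}}$ by pulling back the tautological one-form. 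Each $T^{*}\pi_{(N^{-})^{(l)}}$ is the cotangent lift of a projection along its section $\iota_{(N^{-})^{(l)}}$, hence pulls the tautological one-form of $T^{*}G$ back to that of $T^{*}(N^{-})^{(l)}$, and the translations $L^{*},R^{*}$ are cotangent lifts of diffeomorphisms and so preserve it. The difficulty is that the translating elements $\theta(\alpha)^{[l+1;k]}$ and $\theta(\alpha)^{[2;l-1]}$ in $F$ depend on the remaining factors, so their variation produces cross terms mixing distinct levels, together with the differential of the leading $\omega_{H_{\mathrm{irr}}}$-term. I would show that, after applying $d$, these mixed contributions cancel, the cancellation resting on the defining property of an ordering of $\widetilde{\mathbf{m}}$: the blocks $\mathfrak{n}^{-}_{\mathbf{m}_{l}^{l+1}}$ and $\mathfrak{n}^{-}_{\mathbf{m}_{l'}^{l'+1}}$ for $l\neq l'$ pair to zero under the trace--residue form at the relevant $z$-degrees, so the off-diagonal contributions vanish. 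Running this level by level, combining the identity at level $k$ with the inductive hypothesis for levels $2,\dots,k-1$, then gives the desired equality of two-forms. Controlling these position-dependent translations and verifying the pairwise orthogonality of the triangular blocks is the genuinely delicate point; the rest is formal once the stepwise extension is in place.
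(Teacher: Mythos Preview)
The paper does not contain its own proof of this theorem. Theorem~\ref{thm:stepwise} is stated as a quoted result, attributed in the heading to Yamakawa~\cite{Yam2} (cf.\ Hiroe--Yamakawa~\cite{HY}), and no proof environment follows; the paper proceeds directly to Proposition~\ref{prop:stepwise}. So there is nothing in this paper to compare your attempt against.

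That said, your sketch is broadly in the spirit of how this result is established in the cited references: the ``stepwise'' reduction along the filtration of congruence subgroups, the LU factorization to produce the canonical triangular representative, and the verification that the map matches the Kirillov--Kostant--Souriau form with the product of canonical forms. A few remarks on your outline. First, your description of the stabilizer $G_{H_{\mathrm{irr}}}$ as ``parabolic-type'' is slightly off: inside $\mathrm{GL}_{n}(\mathbb{C}[z]_{k})^{1}$ the stabilizer is rather the group $L_{\mathbf{m}_{1}}(\mathbb{C}[z]_{k})^{1}\cdot(\text{higher congruence pieces})$, and getting its precise shape is what makes the LU projection land exactly in $\prod_{l}(N^{-})^{(l)}$. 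Second, the cancellation you invoke for the symplectic part is not simply ``$\mathfrak{n}^{-}_{\mathbf{m}_{l}^{l+1}}$ and $\mathfrak{n}^{-}_{\mathbf{m}_{l'}^{l'+1}}$ pair to zero'' --- these blocks need not be orthogonal under the trace form on $\mathfrak{gl}_{n}$; rather, the cancellation uses the $z$-grading together with the inclusion relations~(\ref{eq:inclusion}) so that the relevant residues vanish. In the cited proofs this is organized not as a direct $d$-computation on $F$ but as an iterated symplectic reduction (or equivalently a repeated application of the shifting trick for cotangent bundles of semidirect products), which packages the cross-term cancellation more cleanly than the explicit approach you propose. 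Your strategy would work, but expect the bookkeeping in the ``position-dependent translation'' step to be heavier than your sketch suggests.
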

	Let us give a description of the map $\mathrm{res}_{\mathbb{O}_{H_{\mathrm{irr}}}}\colon \mathbb{O}_{H_{\mathrm{irr}}}
	\rightarrow \mathfrak{gl}_{n}(\mathbb{C})_{H_{\mathrm{irr}}}^{*}$ under the above identification.
	\begin{prp}\label{prop:stepwise}
		Under the isomorphism $\prod_{l=2}^{k}T^{*}(N^{-})^{(l)}\cong \mathbb{O}_{H_{\mathrm{irr}}}$,
		the image of each $(\alpha_{l})_{l=2,3,\ldots,k}\in \prod_{l=2}^{k}T^{*}(N^{-})^{(l)}$
		by the map $\mathrm{res}_{\mathbb{O}_{H_{\mathrm{irr}}}}\colon \mathbb{O}_{H_{\mathrm{irr}}}
		\rightarrow \mathfrak{gl}_{n}(\mathbb{C})_{H_{\mathrm{irr}}}^{*}$
		is given by 
		\begin{multline*}
			\mathrm{res}_{\mathbb{O}_{H_{\mathrm{irr}}}}((\alpha_{l})_{l=2,3,\ldots,k})=\\
			\iota_{0,\,H_{\mathrm{irr}}}^{*}\circ\left(\sum_{l=2}^{k}
			\mathrm{Ad}^{*}(\theta(\alpha)^{[2;l-1]})(T^{*}(\pi_{(N^{-})^{(l)}}\circ \pi_{1})(L^{*}_{\theta(\alpha_{l})}(\alpha_{l})))
			\right).
		\end{multline*}
	\end{prp}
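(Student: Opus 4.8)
The plan is to evaluate $\mathrm{res}_{\mathbb{O}_{H_{\mathrm{irr}}}}$ directly by unwinding its defining property and substituting the parametrization $F$ of Theorem \ref{thm:stepwise}. By the preceding proposition, $\mathrm{res}_{\mathbb{O}_{H_{\mathrm{irr}}}}$ is characterized by $\mathrm{res}_{\mathbb{O}_{H_{\mathrm{irr}}}}\circ q=\mu_{\mathrm{GL}_{n}(\mathbb{C}[z]_{k})^{1}}^{0}=\iota_{0,H_{\mathrm{irr}}}^{*}\circ\mu_{\mathrm{GL}_{n}(\mathbb{C}[z]_{k})}\circ T^{*}\pi_{1}$ on $\mu_{\mathrm{GL}_{n}(\mathbb{C}[z]_{k})^{1}}^{-1}(H_{\mathrm{irr}})$. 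On the other hand, Theorem \ref{thm:stepwise} tells us that the point of $\mathbb{O}_{H_{\mathrm{irr}}}$ labelled by $(\alpha_{l})_{l}$ is represented by $F((\alpha_{l})_{l})\in\mu_{\mathrm{GL}_{n}(\mathbb{C}[z]_{k})^{1}}^{-1}(H_{\mathrm{irr}})$, that is, it equals $q(F((\alpha_{l})_{l}))$ (equivalently $\nu_{\mathrm{GL}_{n}(\mathbb{C}[z]_{k})^{1}}(F((\alpha_{l})_{l}))$ in the coadjoint model). Therefore $\mathrm{res}_{\mathbb{O}_{H_{\mathrm{irr}}}}((\alpha_{l})_{l})=\iota_{0,H_{\mathrm{irr}}}^{*}\bigl(\mu_{\mathrm{GL}_{n}(\mathbb{C}[z]_{k})}(T^{*}\pi_{1}(F((\alpha_{l})_{l})))\bigr)$ and everything reduces to an explicit description of $\mu_{\mathrm{GL}_{n}(\mathbb{C}[z]_{k})}\circ T^{*}\pi_{1}$.

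The next step is to make this composite explicit using the splitting $\mathfrak{gl}_{n}(\mathbb{C}[z]_{k})=\mathfrak{gl}_{n}(\mathbb{C})\oplus\mathfrak{gl}_{n}(\mathbb{C}[z]_{k})^{1}$ coming from the semidirect product. Writing $X=X_{0}+X_{1}$ and differentiating $\pi_{1}\circ L_{g'}$ at the identity, I would obtain, for $\delta\in T_{g'}^{*}\mathrm{GL}_{n}(\mathbb{C}[z]_{k})^{1}$,
\[
\langle\mu_{\mathrm{GL}_{n}(\mathbb{C}[z]_{k})}(T^{*}\pi_{1}(\delta)),X\rangle=\bigl\langle\mu_{\mathrm{GL}_{n}(\mathbb{C}[z]_{k})^{1}}(\delta),\,X_{1}+(\id-\Ad(g'^{-1}))X_{0}\bigr\rangle,
\]
so that, after applying $\iota_{0,H_{\mathrm{irr}}}^{*}$, only the moment value $\mu_{\mathrm{GL}_{n}(\mathbb{C}[z]_{k})^{1}}(\delta)$ and the base point $g'=\theta(\delta)$ survive. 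Substituting $\mu_{\mathrm{GL}_{n}(\mathbb{C}[z]_{k})^{1}}(F((\alpha_{l})_{l}))=H_{\mathrm{irr}}$ and $g'=\theta(\alpha)^{[2;k]}=\theta(\alpha_{k})\cdots\theta(\alpha_{2})$, the value $\mathrm{res}_{\mathbb{O}_{H_{\mathrm{irr}}}}((\alpha_{l})_{l})$ becomes the functional $X_{0}\mapsto\langle H_{\mathrm{irr}},(\id-\Ad((\theta(\alpha)^{[2;k]})^{-1}))X_{0}\rangle$ on $\mathfrak{gl}_{n}(\mathbb{C})_{H_{\mathrm{irr}}}$. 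I would then telescope along the factorization of $\theta(\alpha)^{[2;k]}$ via
\[
\id-\Ad((\theta(\alpha)^{[2;k]})^{-1})=\sum_{l=2}^{k}\Ad((\theta(\alpha)^{[2;l-1]})^{-1})\bigl(\id-\Ad(\theta(\alpha_{l})^{-1})\bigr),
\]
and move each $\Ad((\theta(\alpha)^{[2;l-1]})^{-1})$ onto the functional side, which produces exactly the outer coadjoint prefactors $\Ad^{*}(\theta(\alpha)^{[2;l-1]})$ in the claimed formula.

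What remains is to match, term by term, the single factor $\langle H_{\mathrm{irr}},(\id-\Ad(\theta(\alpha_{l})^{-1}))Y\rangle$ with $\langle T^{*}(\pi_{(N^{-})^{(l)}}\circ\pi_{1})(L^{*}_{\theta(\alpha_{l})}(\alpha_{l})),Y\rangle$. Since $T^{*}(\pi_{(N^{-})^{(l)}}\circ\pi_{1})$ is dual to the projection $\mathfrak{gl}_{n}(\mathbb{C}[z]_{k})\to\Lie((N^{-})^{(l)})$ and $L^{*}_{\theta(\alpha_{l})}(\alpha_{l})$ is the left trivialization of $\alpha_{l}$, the right-hand side is the pairing of $\alpha_{l}$ with the $\Lie((N^{-})^{(l)})$-component of $Y$; this identity is precisely the infinitesimal statement of how the $l$-th factor $T^{*}\pi_{(N^{-})^{(l)}}(\alpha_{l})$ of $F$ contributes to the constraint $\mu_{\mathrm{GL}_{n}(\mathbb{C}[z]_{k})^{1}}(F((\alpha_{l})_{l}))=H_{\mathrm{irr}}$, and I would read it off from the stepwise moment computation underlying Theorem \ref{thm:stepwise}. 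The hard part will be exactly this per-step matching: because the projections $\pi_{1}$ and $\pi_{(N^{-})^{(l)}}$ are not group homomorphisms, one has to track them carefully through the left and right translations built into $F$, and the equality is expected to hold only after restriction to $\mathfrak{gl}_{n}(\mathbb{C})_{H_{\mathrm{irr}}}$, where the condition that $X_{0}$ commutes with the semisimple parts $S_{1},\ldots,S_{k}$ of $H$ supplies the slack that makes the telescoped coadjoint terms agree with the cotangent-lifted moments.
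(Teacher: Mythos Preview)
Your approach is exactly the paper's: its entire proof is the single line ``Directly follows from the definition of $\mathrm{res}_{\mathbb{O}_{H_{\mathrm{irr}}}}$ and Theorem~\ref{thm:stepwise}.'' You are simply writing out what that direct computation would look like, and the outline---evaluate $\iota_{0,H_{\mathrm{irr}}}^{*}\circ\mu_{\mathrm{GL}_{n}(\mathbb{C}[z]_{k})}\circ T^{*}\pi_{1}$ on the representative $F((\alpha_{l})_{l})$, then telescope along the factorization $\theta(\alpha)^{[2;k]}=\theta(\alpha_{k})\cdots\theta(\alpha_{2})$---is the natural one.

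One point to watch: you invoke $\mu_{\mathrm{GL}_{n}(\mathbb{C}[z]_{k})^{1}}(F((\alpha_{l})_{l}))=H_{\mathrm{irr}}$, i.e.\ that $F$ lands in the fibre $\mu^{-1}(H_{\mathrm{irr}})$ and hence genuinely lifts the quotient map $q$. This is not visible from the bare statement of Theorem~\ref{thm:stepwise} (which only asserts that $\nu\circ F$ hits $\mathbb{O}_{H_{\mathrm{irr}}}$), so it has to be extracted from Yamakawa's construction of $F$; your instinct to locate the ``hard part'' inside the stepwise moment computation underlying that theorem is therefore correct. The paper absorbs all of this into its citation of Theorem~\ref{thm:stepwise} and does not spell it out either.
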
 
	\begin{proof}
		Directly follows from the definition of $\mathrm{res}_{\mathbb{O}_{H_{\mathrm{irr}}}}$ and Theorem \ref{thm:stepwise}.
	\end{proof}
	\subsection{Moduli space of meromorphic connections on a trivial bundle over the Riemann sphere 
	with unramified irregular singularities}
	Now we are ready to define a moduli space of meromorphic connections on a trivial bundle over the Riemann sphere
	as a symplectic reduction of a product of truncated orbits.
	
	\subsubsection{Algebraic meromorphic connections on a trivial bundle over the Riemann sphere}
	Let us denote the sheaf of regular function on $\mathbb{P}^{1}$ by $\mathcal{O}^{\mathrm{reg}}_{\mathbb{P}^{1}}$.
	Let $z$ be the standard coordinate on $\mathbb{C}\subset \mathbb{P}^{1}$.
	For 
	$c\in \mathbb{P}^{1}$, set $z_{c}:=z-c$ if $c\in \mathbb{C}$ and $\displaystyle z_{\infty}:=\frac{1}{z}$ if $c=\infty$.
	Let $\widehat{\mathcal{O}}_{c}:=\mathbb{C}[\![z_{c}]\!]$ be the ring of formal power series 
	and $\widehat{\mathcal{M}}_{c}:=\mathbb{C}(\!(z_{c})\!)$ the field of formal Laurent series around $c\in \mathbb{P}^{1}$.
	Also set $\mathcal{O}_{c,i}:=\mathbb{C}[z-c]_{i}$ and $\mathcal{M}_{c,i}:=\mathbb{C}[(z-c)^{-1}]_{i}$
	for a positive integer $i$.
	
	Let us consider a finite set $\{a_{0},a_{1},\ldots,a_{d}\}$
	of points in $\mathbb{P}^{1}$ and fix an effective divisor 
	$D:=\sum_{a\in \{a_{0},a_{1},\ldots,a_{d}\}}(k_{a}+1)\cdot a$
	with $k_{a}\in\mathbb{Z}_{\ge 0}$.
	We denote the set of points by $|D|:=\{a_{0},a_{1},\ldots,a_{d}\}$.
	Let $\mathcal{O}^{\mathrm{reg}}_{\mathbb{P}^{1}}(*|D|)$ and $\varOmega^{\mathrm{reg}}_{\mathbb{P}^{1}}(*|D|)$ denote 
	the sheaves of rational function and of rational 1-forms with poles on $|D|$ respectively.
	
	\begin{dfn}
		An algebraic {\em meromorphic connection} on the trivial bundle is a differential operator,
		namely morphism of sheaves of $\mathbb{C}$-vector spaces
		\[
		\nabla\colon (\mathcal{O}^{\mathrm{reg}}_{\mathbb{P}^{1}})^{n}\longrightarrow (\mathcal{O}^{\mathrm{reg}}_{\mathbb{P}^{1}})^{n}\otimes_{\mathcal{O}^{\mathrm{reg}}_{\mathbb{P}^{1}}}\varOmega^{\mathrm{reg}}_{\mathbb{P}^{1}}(*|D|),
		\]
		satisfying the Leibniz rule
		\[
			\nabla(fs)=df\otimes s+f\nabla(s)	
		\]
		for all open subsets $U\subset \mathbb{P}^{1}$, $f\in \mathcal{O}^{\mathrm{reg}}_{\mathbb{P}^{1}}(U)$, and $s\in (\mathcal{O}^{\mathrm{reg}}_{\mathbb{P}^{1}})^{n}(U)$.
	On any open subset $U\subset \mathbb{P}^{1}$,  $\nabla$ can be uniquely written as the matrix form,
	\[
	\nabla=d-A_{U}(z)dz 
	\]
	where $A_{U}(z)dz\in M_{n}(\varOmega^{\mathrm{reg}}_{\mathbb{P}^{1}}(*|D|)(U))$.
	Thus there uniquely exists the matrix 1-form $A(z)dz\in M_{n}(\varOmega^{\mathrm{reg}}_{\mathbb{P}^{1}}(*|D|)(\mathbb{P}^{1}))$
	defined globally on $\mathbb{P}^{1}$ such that 
	\[
		A(z)dz|_U=A_{U}(z)dz 
	\]
	for any open subset $U\subset \mathbb{P}^{1}$. We call this matrix $A(z)$ the {\em coefficient matrix} of the connection $\nabla$.
	\end{dfn}
	
	The automorphism group of the free $\mathcal{O}^{\mathrm{reg}}_{\mathbb{P}^{1}}$-module $(\mathcal{O}^{\mathrm{reg}}_{\mathbb{P}^{1}})^{n}=
	\mathbb{C}^{n}\otimes_{\mathbb{C}}\mathcal{O}^{\mathrm{reg}}_{\mathbb{P}^{1}}$
	is isomorphic to $\mathrm{GL}_{n}(\mathbb{C})$. Thus
	meromorphic connections $\nabla_{1}=d-A_{1}(z)dz$ and $\nabla_{2}=d-A_{2}(z)dz$ of rank $n$ are isomorphic if and only if there exists $g\in
	\mathrm{GL}_{n}(\mathbb{C})$ such that
	\[
		A_{2}(z)=gA_{1}(z)g^{-1}.
	\]
	A connection $\nabla$ is said to be {\em irreducible} if there is no
	nontrivial subspace $W\subset \mathbb{C}^{n}$ such that the sub $\mathcal{O}^{\mathrm{reg}}_{\mathbb{P}^{1}}$-module $W\otimes_{\mathbb{C}}\mathcal{O}^{\mathrm{reg}}_{\mathbb{P}^{1}}\subset \mathbb{C}^{n}\otimes_{\mathbb{C}}\mathcal{O}^{\mathrm{reg}}_{\mathbb{P}^{1}}=(\mathcal{O}^{\mathrm{reg}}_{\mathbb{P}^{1}})^{n}$ is $\nabla$-stable.
	
	\subsubsection{The space of meromorphic connections as a complex manifold}
	By the projective transformation on $\mathbb{P}^{1}(\mathbb{C})$, we may suppose that $|D|\subset \mathbb{C}$.
	Let us consider a space of connections with coefficients in $\varOmega^{\mathrm{reg}}_{\mathbb{P}^{1}}(*|D|)$, i.e.,
	\[
		\mathcal{M}_{D}:=\{\nabla=d-A(z)dz\mid A(z)dz\in M_{n}(\varOmega^{\mathrm{reg}}_{\mathbb{P}^{1}}(*|D|)(\mathbb{P}^{1}))\}.  
	\]
	For $d-A(z)dz\in \mathcal{M}_{D}$, we can write 
	\[
		A(z)=\sum_{a\in |D|}\mathrm{prc}_{a}(A(z))=\sum_{a\in |D|}\sum_{i=0}^{k_{a}}\frac{A^{(a)}_{i}}{(z-a)^{i}}\frac{dz}{z-a}.
	\]
	with the relation 
	\[
		\sum_{a\in |D|}A^{(a)}_{0}=0.
	\]
	Thus we can identify $\mathcal{M}_{D}$ as a space of matrices,
	\[
		M_{D}:=\left\{\left(X^{(a)}_{i}\right)_{\substack{a\in |D|,\\ i=0,1,\ldots,k_{a}}}\in \prod_{a\in |D|}\prod_{i=0}^{k_{a}}M_{n}(\mathbb{C})\,\middle|\, 
		\sum_{a\in |D|}\sum_{i=0}^{k_{a}}X^{(a)}_{i}=0\right\},
	\]	
	which is a union of hyperplanes in the affine space $\prod_{a\in |D|}\prod_{i=0}^{k_{a}}M_{n}(\mathbb{C})$, i.e., $M_{D}$ is a complex manifold.
	The $\mathrm{GL}_{n}(\mathbb{C})$-action on $\mathcal{M}_{D}$ is translated into the diagonal 
	action on $M_{D}$, i.e.,
	\[
		\mathrm{GL}_{n}(\mathbb{C})\times M_{D}\ni \left(g, \left(X^{(a)}_{i}\right)_{\substack{a\in |D|,\\ i=0,1,\ldots,k_{a}}}\right)
		\longmapsto \left(gX^{(a)}_{i}g^{-1}\right)_{\substack{a\in |D|,\\ i=0,1,\ldots,k_{a}}}\in M_{D}.
	\]
	Under this identification, we regard $\mathcal{M}_{D}$ as a complex manifold with holomorphic $\mathrm{GL}_{n}(\mathbb{C})$-action.
	The irreducibility of connections on $\mathcal{M}_{D}$ corresponds to the following condition on $M_{D}$, namely, $\left(X^{(a)}_{i}\right)_{\substack{a\in |D|,\\ i=0,1,\ldots,k_{a}}}
	\in \prod_{a\in |D|}\prod_{i=0}^{k_{a}}M_{n}(\mathbb{C})$ is said to be {\em irreducible} if it has no nontrivial simultaneous invariant subspace of $\mathbb{C}^{n}$.
	That is to say, if a subspace $W\subset \mathbb{C}^{n}$ satisfies $X^{(a)}_{i}W\subset W$ for all $a\in |D|$ and $i=0,1,\ldots,k_{a}$, then
	$W=\{0\}$ or $W=\mathbb{C}^{n}$.
	Let us denote subspaces of irreducible elements in $\mathcal{M}_{D}$ and $M_{D}$
	by $\mathcal{M}_{D}^{s}$ and $M^{s}_{D}$ respectively.
	
	\subsubsection{Residue maps of truncated orbits}\label{sec:red}
	Let $\mathbb{O}_{H}$ be the truncated $\mathrm{GL}_{n}(\mathbb{C}[z]_{k})$-orbit of 
	an unramified HTL normal form $H
	\in M_{n}(\mathbb{C}[z^{-1}]_{k})$ 
	and recall that the map
	\[
		\mu_{\mathbb{O}_{H}}\colon \mathbb{O}_{H}\cong \mathrm{GL}_{n}(\mathbb{C}[z]_{k})/\mathrm{GL}_{n}(\mathbb{C}[z]_{k})_{H}\ni [g]
		\longmapsto -\mathrm{Ad}^{*}(g)(H)\in M_{n}(\mathbb{C}[z^{-1}]_{k}),
	\]
	is a moment map with respect to the action of $\mathrm{GL}_{n}(\mathbb{C}[z]_{k})$.
	Then the map 
	$\mu_{\mathbb{O}_{H_{a}}}^{0}:=\mathrm{res}_{z=a}\circ \mu_{\mathbb{O}_{H_{a}}}\colon \mathbb{O}_{H_{a}}
	\rightarrow M_{n}(\mathbb{C})\cong \mathfrak{gl}_{n}(\mathbb{C})^{*}$
	becomes a moment map  with respect to the action of $\mathrm{GL}_{n}(\mathbb{C}).$
	Let us consider the map  
	\[
		\nu_{\mathrm{GL}_{n}(\mathbb{C})}\circ \mathrm{pr}_{1}\colon T^{*}\mathrm{GL}_{n}(\mathbb{C})\times \mathbb{O}_{H_{\mathrm{irr}}}\rightarrow 
		\mathfrak{gl}_{n}(\mathbb{C})^{*}
	\]
	and its restriction 
	$\nu_{\mathrm{GL}_{n}(\mathbb{C})}\circ \mathrm{pr}_{1}|_{\mu_{\mathrm{ext}}^{-1}(H_{\mathrm{res}})}$
	on the subspace $\mu_{\mathrm{ext}}^{-1}(H_{\mathrm{res}})\subset T^{*}\mathrm{GL}_{n}(\mathbb{C})\times \mathbb{O}_{H_{\mathrm{irr}}}$.
	Then since $\nu_{\mathrm{GL}_{n}(\mathbb{C})}\circ \mathrm{pr}_{1}|_{\mu_{\mathrm{ext}}^{-1}(H_{\mathrm{res}})}$ 
	is $(\mathrm{GL}_{n}(\mathbb{C})_{H_{\mathrm{irr}}})_{H_{\mathrm{res}}}$-invariant,
	there uniquely exists the map 
	$\nu_{\mathbb{O}_{H}}\colon \mathbb{O}_{H}\rightarrow \mathfrak{gl}_{n}(\mathbb{C})^{*}$ such that 
	the diagram 
	\[
		\begin{tikzcd}
			\mu_{\mathrm{ext}}^{-1}(H_{\mathrm{res}}) \arrow[r,"\nu_{\mathrm{GL}_{n}(\mathbb{C})}\circ \mathrm{pr}_{1}"] \arrow[d]&
			[2 em]\mathfrak{gl}_{n}(\mathbb{C})^{*}\\
			\mathbb{O}_{H}\arrow[ur, "\nu_{\mathbb{O}_{H}}"']&
		\end{tikzcd}
	\]
	is commutative. Then by Theorem \ref{thm:boalch}, we obtain the following.
	\begin{prp}\label{prop:red}
		The map $-\nu_{\mathbb{O}_{H}}\colon \mathbb{O}_{H}\rightarrow \mathfrak{gl}_{n}(\mathbb{C})^{*}$ defined above 
		coincides with the moment map $\mu_{\mathbb{O}_{H}}^{0}\colon \mathbb{O}_{H}\rightarrow \mathfrak{gl}_{n}(\mathbb{C})^{*}$.
	\end{prp}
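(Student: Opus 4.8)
The plan is to evaluate both maps on a common representative and read off that they are negatives of one another, using the general fact recorded above: for a complex Lie group $G$ and $\xi\in\mathfrak{g}^{*}$, the injective immersion $\mathbb{O}_{\xi}\cong G_{\xi}\backslash\mu_{G}^{-1}(\xi)\to\mathfrak{g}^{*}$ induced by $\nu_{G}$ is $[g]\mapsto\mathrm{Ad}^{*}(g)(\xi)$. I would apply this with $G=\mathrm{GL}_{n}(\mathbb{C}[z]_{k})$ and $\xi=H$, and then restrict the symmetry to the subgroup $\mathrm{GL}_{n}(\mathbb{C})$.

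First I would rewrite $\mu_{\mathbb{O}_{H}}^{0}$. Since $\mathrm{res}_{z=0}$ is exactly the projection $\iota_{0}^{*}\colon\mathfrak{gl}_{n}(\mathbb{C}[z]_{k})^{*}\to\mathfrak{gl}_{n}(\mathbb{C})^{*}$ dual to $\iota_{0}\colon\mathrm{GL}_{n}(\mathbb{C})\hookrightarrow\mathrm{GL}_{n}(\mathbb{C}[z]_{k})$, the formula $\mu_{\mathbb{O}_{H}}([g])=-\mathrm{Ad}^{*}(g)(H)$ gives $\mu_{\mathbb{O}_{H}}^{0}([g])=-\iota_{0}^{*}\!\left(\mathrm{Ad}^{*}(g)(H)\right)$.

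Next I would identify $\nu_{\mathbb{O}_{H}}$. By construction it is induced by $\nu_{\mathrm{GL}_{n}(\mathbb{C})}\circ\mathrm{pr}_{1}$ on the extended orbit, which is the first-stage reduction of $T^{*}\mathrm{GL}_{n}(\mathbb{C}[z]_{k})$ along the cotangent lift of the multiplication map $\iota_{0}\times\iota_{1}\colon\mathrm{GL}_{n}(\mathbb{C})\times\mathrm{GL}_{n}(\mathbb{C}[z]_{k})^{1}\to\mathrm{GL}_{n}(\mathbb{C}[z]_{k})$. For $\alpha\in T^{*}_{g}\mathrm{GL}_{n}(\mathbb{C}[z]_{k})$ with $g=g_{0}g_{1}$ and image $(\alpha_{0},\alpha_{1})$, the tangent of the product map is $X_{0}\mapsto(R_{g_{1}})_{*}X_{0}$ on the first factor, so the identity $(R_{g_{0}g_{1}})_{*}=(R_{g_{1}})_{*}\circ(R_{g_{0}})_{*}$ on $T_{e}$ yields directly $\nu_{\mathrm{GL}_{n}(\mathbb{C})}(\alpha_{0})=R^{*}_{g_{0}}\alpha_{0}=\iota_{0}^{*}\!\left(R^{*}_{g}\alpha\right)=\iota_{0}^{*}\!\left(\nu_{\mathrm{GL}_{n}(\mathbb{C}[z]_{k})}(\alpha)\right)$, with no correction term from the $\mathrm{GL}_{n}(\mathbb{C}[z]_{k})^{1}$-factor. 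Restricting to $\mu_{\mathrm{GL}_{n}(\mathbb{C}[z]_{k})}^{-1}(H)$ and descending to $\mathbb{O}_{H}$, the general fact gives $\nu_{\mathbb{O}_{H}}([g])=\iota_{0}^{*}\!\left(\mathrm{Ad}^{*}(g)(H)\right)$. Comparing with the previous paragraph produces $-\nu_{\mathbb{O}_{H}}=\mu_{\mathbb{O}_{H}}^{0}$.

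The only genuine care will be the bookkeeping rather than any hard estimate: one must invoke Theorem \ref{thm:boalch} to match the two presentations of $\mathbb{O}_{H}$, as $\mathrm{GL}_{n}(\mathbb{C}[z]_{k})_{H}\backslash\mu_{\mathrm{GL}_{n}(\mathbb{C}[z]_{k})}^{-1}(H)$ and as $(\mathrm{GL}_{n}(\mathbb{C})_{H_{\mathrm{irr}}})_{H_{\mathrm{res}}}\backslash\mu_{\mathrm{ext}}^{-1}(H_{\mathrm{res}})$, so that the point $(\alpha_{0},\xi)$ feeding the computation genuinely lies over $[g]$ and so that the relation $\nu_{G}(\alpha)=\mathrm{Ad}^{*}(\theta(\alpha))\mu_{G}(\alpha)$ is used with the coadjoint-action convention consistent with the minus sign in $\mu_{\mathbb{O}_{H}}$. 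As a consistency check I would evaluate at the base point $[e]$, where both $\mu_{\mathbb{O}_{H}}^{0}([e])$ and $-\nu_{\mathbb{O}_{H}}([e])$ equal $-\iota_{0}^{*}(H)=-H_{\mathrm{res}}$; this pins down the constant ambiguity that otherwise arises when comparing two moment maps for the same $\mathrm{GL}_{n}(\mathbb{C})$-action on $\mathbb{O}_{H}$.
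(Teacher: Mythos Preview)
Your proposal is correct and follows the same line the paper indicates: the paper offers no detailed proof, merely stating that the result follows from Theorem~\ref{thm:boalch}, and your argument is precisely the unpacking of that claim. Your key computation $\nu_{\mathrm{GL}_{n}(\mathbb{C})}(\alpha_{0})=\iota_{0}^{*}\bigl(\nu_{\mathrm{GL}_{n}(\mathbb{C}[z]_{k})}(\alpha)\bigr)$, together with the general identification of the $\nu_{G}$-induced immersion with $[g]\mapsto\mathrm{Ad}^{*}(g)(\xi)$, supplies exactly the bookkeeping the paper suppresses, and your final paragraph correctly isolates Theorem~\ref{thm:boalch} as the ingredient needed to pass between the two presentations of $\mathbb{O}_{H}$.
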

	
	\subsubsection{Moduli space of meromorphic connections on a trivial bundle with unramified irregular sigularities}
	For each $a\in |D|$, let us take
	an unramified HTL normal form
	\[
	H_{a}:=\left(\frac{S_{k_{a}}^{(a)}}{z_{a}^{k_{a}-1}}+\cdots+\frac{S^{(a)}_{1}}{z_{a}}+S^{(a)}_{0}+N^{(a)}_{0}\right)
	\frac{dz_{a}}{z_{a}},
	\]
	such that 
	$
		\sum_{a\in |D|}\mathrm{res}_{z=a}\mathrm{tr}(H_{a})=0.$	
	We denote the collection of these HTL normal forms by 
	$\mathbf{H}:=(H_{a})_{a\in |D|}$.
	
	Let us consider the map 
	\[
		\begin{array}{cccc}
			\mu_{\mathbf{H}}\colon &\prod_{a\in |D|}\mathbb{O}_{H_{a}}&\longrightarrow &\mathfrak{gl}_{n}(\mathbb{C})^{*}\\
			&(X_{a})_{a\in |D|}&\longmapsto &\sum_{a\in |D|}\mu_{\mathbb{O}_{H_{a}}}^{0}(X_{a})
		\end{array}
	\]
	which is a moment map with respect to the diagonal action of $\mathrm{GL}_{n}(\mathbb{C})$ on the product space $\prod_{a\in |D|}\mathbb{O}_{H_{a}}$.
	
	By the injective immersion
	\[
		\begin{array}{cccc}
		\iota_{\prod_{a\in |D|}{\mathbb{O}_{H_{a}}}}\colon &\prod_{a\in |D|}\mathbb{O}_{H_{a}}&\longrightarrow &\prod_{a\in |D|}M_{n}(\mathbb{C}[z_{a}^{-1}]_{k_{a}})\\
		&([g_{a}])_{a\in |D|}&\longmapsto &(\mathrm{Ad}^{*}(g_{a})(H_{a}))_{a\in |D|}
		\end{array},
	\]
	we regard $\prod_{a\in |D|}\mathbb{O}_{H_{a}}$ as 
	a subspace of $\prod_{a\in |D|}\prod_{i=0}^{k_{a}}M_{n}(\mathbb{C})$ and 
	we denote the space of irreducible elements in $\prod_{a\in |D|}\mathbb{O}_{H_{a}}\subset \prod_{a\in |D|}\prod_{i=0}^{k_{a}}M_{n}(\mathbb{C})$
	by $\left(\prod_{a\in |D|}\mathbb{O}_{H_{a}}\right)^{s}$.
	
	\begin{dfn}
	The symplectic quotient space
		\[
			\mathcal{M}^{*}_{s}(\mathbf{H}):=\mathrm{GL}_{n}(\mathbb{C})\backslash (\mu_{\mathbf{H}}^{s})^{-1}(0)
		\]
		is 
		the {\em moduli space} of meromorphic connections on the rank $n$ trivial bundle on $\mathbb{P}^{1}$
		with respect to the unramified HTL normal forms $\mathbf{H}$.
		It is known that 
		if $(\mu_{\mathbf{H}}^{s})^{-1}(0)\neq \emptyset$, then the moduli space $\mathcal{M}^{*}_{s}(\mathbf{H})$ is a holomorphic symplectic manifold.
	\end{dfn}
	
	\section{Construction of unfolding manifold}
	This section is the main part of the note. 
	Let us consider a moduli space of meromorphic connection $\mathcal{M}^{*}_{s}(\mathbf{H})$
	with respect to a collection of unramified HTL normal forms $\mathbf{H}$.
	First we introduce a holomorphic family of 
	collections of unramified HTL normal forms which describes 
	unfolding procedure of irregular singularities of HTL normal forms in 
	$\mathbf{H}$.
	We call this family the unfolding of $\mathbf{H}$.
	Then we shall explain the construction of 
	a Poisson manifold whose 
	symplectic leaves are isomorphic to Zariski open subsets of the moduli spaces of 
	meromorphic connections with respect to the collections of HTL normal forms 
	appearing in this unfolding family.
	
	\subsection{Deformation of HTL normal forms and unfolding of spectral types}
	
	\subsubsection{An open subset of $\mathbb{C}^{k+1}$ associated to $H$}\label{sec:openset}
	Let us consider an unramified HTL normal form
	\[
		H=\left(\frac{S_{k}}{z^{k}}+\cdots+\frac{S_{1}}{z}+S_{0}+N_{0}\right)\frac{dz}{z}
		\in M_{n}(\mathbb{C}[z^{-1}]_{k})dz
	\]
	with the spectral type 
	\[
		\mathrm{sp}(H)=(\mathbf{m}_{0}\le_{\phi_{0}}\le  \mathbf{m}_{1}\le_{\phi_{1}} \cdots \le_{\phi_{k-1}}\mathbf{m}_{k}, \sigma(\mathbf{m}_{0})).
	\]
	Let $\mathbb{C}^{n}=\bigoplus_{j=1}^{m(i)}V_{\langle i,j\rangle}$
	be the direct sum of simultaneous eigenspaces
	of $(S_{i},S_{i+1},\ldots,S_{k})$ defined in Section \ref{sec:spec}.  
	We denote the eigenvalues of $S_{l}$ on $V_{\langle i,j\rangle}$
	by $s_{\langle i,j\rangle}^{(l)}$ for $i\le l\le k$.
	Let us introduce a distance function $d_{i}(\cdot,\cdot)$
	on $\{1,2,\ldots,m(i)\}$ for each $i=0,1,\ldots,k$ as follows,
	\[
		d_{i}(j,j'):=\mathrm{min}\{l\in\{i,i+1,\ldots,k\}\mid
		\phi^{l}_{i}(j)=\phi^{l}_{i}(j')\}-i,
	\]
	where $\phi_{i}^{l}$ is the composition of refinement maps
	$\phi_{l}\circ\cdots \circ\phi_{i+1}\circ \phi_{i}$
	and we formally set $\phi^{i}_{i}:=\phi_{i}$ and 
	$\phi_{k}\colon \{1,2,\ldots,m(k)\}\rightarrow\{1\}$.
	Then we note that 
	\[
		\begin{cases}
			V_{\langle l,\phi_{i}^{l}(j)\rangle}=V_{\langle l,\phi_{i}^{l}(j')\rangle}& d_{i}(j,j')+i\le l\le k,\\
			V_{\langle l,\phi_{i}^{l}(j)\rangle}\neq V_{\langle l,\phi_{i}^{l}(j')\rangle}&l=d_{i}(j,j')+i-1.
		\end{cases}	
	\]
	Since $V_{\langle i,j\rangle}$ is a direct summand of $V_{\langle l,\phi_{i}^{l}(j)\rangle}$, this implies that 
	eigenvalues $s_{\langle i,j'\rangle}^{(l)}$ satisfy the following relations,
	\begin{equation}\label{eq:eigen}
		\begin{cases}
			s_{\langle i,j'\rangle}^{(l)}=s_{\langle i,j'\rangle}^{(l)}
		&d_{i}(j,j')+i\le l\le k,\\
		s^{(l)}_{\langle i,j\rangle}\neq s^{(l)}_{\langle i,j'\rangle}&l=d_{i}(j,j')+i-1.
		\end{cases}
	\end{equation}
	Let us define polynomials
	\begin{align*}
		\alpha(x_{0},x_{1},\ldots,x_{k})_{\langle i,j\rangle}:=
		\sum_{i\le l\le k}s^{(l)}_{\langle i,j\rangle}\prod_{l<\nu\le k}(x_{i}-x_{\nu}),
	\end{align*}
	and hypersurfaces
	\[
		D_{i;j,j'}:=\left\{
			\mathbf{c}\in \mathbb{C}^{k+1}\, \middle|\, \alpha(\mathbf{c})_{\langle i,j\rangle}
			= \alpha(\mathbf{c})_{\langle i,j'\rangle}
			\right\}
	\]
	for $i=0,1,\ldots,k-1$ and $j\neq j'\in \{1,2,\ldots,m(i)\}$.  
	Then $(\ref{eq:eigen})$ implies that 
	\begin{equation*}
		\begin{split}
			&\alpha(x_{0},x_{1},\ldots,x_{k})_{\langle i,j\rangle}
			-\alpha(x_{0},x_{1},\ldots,x_{k})_{\langle i,j'\rangle}\\
			&\quad =\sum_{i\le l \le i+d_{i}(j,j')-1}(s^{(l)}_{\langle i,j\rangle }-s^{(l)}_{\langle i,j'\rangle})
			\prod_{l<\nu\le k}(x_{i}-x_{\nu})\\
			&\quad =\prod_{d_{i}(j,j')+i\le \mu\le k}(x_{i}-x_{\nu})\left(\sum_{i\le l \le i+d_{i}(j,j')-1}(s^{(l)}_{\langle i,j\rangle }-s^{(l)}_{\langle i,j'\rangle})
			\prod_{l<\nu\le i+ d_{i}(j,j')-1}(x_{i}-x_{\nu})\right).
		\end{split}
	\end{equation*}
	This leads us to define the following polynomials
	\[
		\alpha^{*}(x_{0},x_{1},\ldots,x_{k})_{i;j,j'}:=	\sum_{i\le l \le i+d_{i}(j,j')-1}(s^{(l)}_{\langle i,j\rangle }-s^{(l)}_{\langle i,j'\rangle})
		\prod_{l<\nu\le i+ d_{i}(j,j')-1}(x_{i}-x_{\nu}).
	\]
	Then we define the subset of $D_{i;j,j'}$ by 
	\[
		D_{i;j,j'}^{*}:=\left\{
		\mathbf{c}\in \mathbb{C}^{k+1}\,\middle|\,
		\alpha^{*}(\mathbf{c})_{i;j,j'}=0
		\right\}\subset D_{i;j,j'}.
	\]
	Then we define the Zariski open subset of $\mathbb{C}^{k+1}$ by
	\begin{equation}\label{eq:deformationspace}
		\mathbb{D}(H):=\mathbb{C}^{k+1}-\left(\bigcup_{i=0}^{k-1}\bigcup_{\substack{j,j'\in\{1,2,\ldots,m(i)\} \\j< j'}}D_{i;j,j'}^{*}\right).
	\end{equation}
	We note that $\mathbb{D}(H)$ contains the origin $\mathbf{0}\in 
	\mathbb{C}^{k+1}$.
	
	\subsubsection{A stratification of $\mathbb{C}^{k+1}$ associated to partitions of a finite set}\label{sec:strata}
	Firstly we introduce a stratification of $\mathbb{C}^{k+1}$
	which is induced from partitions of the finite set $\{0,1,2,\ldots,k\}$.
	Let
	$\mathcal{I}\colon I_{1},I_{2},\ldots,I_{r}$  be a partition of $\{0,1,2,\ldots,k\}$,
	i.e., the direct sum decomposition
	$I_{0}\sqcup I_{1}\sqcup\cdots\sqcup I_{r}=\{0,1,2,\ldots,k\}.$
	We may suppose that elements in $I_{j}=\{i_{[j,0]},i_{[j,1]},\ldots,i_{[j,k_{j}]}\}$
	are arranged in the ascending order,
	i.e.,
	\[
		i_{[j,0]}<i_{[j,1]}<\ldots<i_{[j,k_{j}]}
	\]
	as positive integers. Also we assume that  $0\in I_{0}$.
	Along with the partition $\mathcal{I}$, we define an embedding of $\mathbb{C}^{r+1}$ into $\mathbb{C}^{k+1}$,
	\[
		\iota_{\mathcal{I}}\colon \mathbb{C}^{r+1}\ni\mathbf{a}=(a_{0},a_{1},\ldots,a_{r})\mapsto 
		(\iota_{\mathcal{I}}(\mathbf{a})_{0},\iota_{\mathcal{I}}(\mathbf{a})_{1},\ldots,\iota_{\mathcal{I}}(\mathbf{a})_{k})\in \mathbb{C}^{k+1}
	\]
	by setting 
	\[
		\iota_{\mathcal{I}}(\mathbf{a})_{i}:=a_{l}\quad (i\in I_{l}).
	\]
	Let us denote the configuration space of $r+1$ points in $\mathbb{C}$ by 
	\[
		C_{r+1}(\mathbb{C}):=\{(a_{0},a_{1},\ldots,a_{r})\in\mathbb{C}^{r+1}\mid a_{i}\neq a_{j}\text{ for }i\neq j\}.	
	\]
	Then we define a subspace of $\mathbb{C}^{k}$ by 
	\[
		C(\mathcal{I}):=\iota_{\mathcal{I}}(C_{r+1}(\mathbb{C}))=
		\left\{
			(a_{0},a_{1},\ldots,a_{k})\in \mathbb{C}^{k+1}\,\middle|\,
			\begin{array}{ll}
				a_{i}=a_{j}&\text{if }i,j\in I_{l}\text{ for some }l,\\
				a_{i}\neq a_{j}&\text{otherwise}
			\end{array}
		\right\}.
	\]
	Then we obtain the direct sum decomposition
	\begin{equation}\label{eq:stra}
			\mathbb{C}^{k+1}=\bigsqcup_{\mathcal{I}\in \mathcal{P}_{[k+1]}}C(\mathcal{I}),
	\end{equation}
	where $\mathcal{P}_{[k+1]}$ is the set of all partitions of $\{0,1,\ldots,k\}$.
	The set $\mathcal{P}_{[k+1]}$ of partitions is 
	naturally equipped with the partial order defined by the 
	refinement of the partitions and the each direct summand satisfies the closure relation
	\[
		\overline{C(\mathcal{I})}=\bigsqcup_{\substack{\mathcal{I}'\in \mathcal{P}_{[k+1]},\\\mathcal{I}'\le \mathcal{I}}}C(\mathcal{I}').
	\]
	Thus the decomposition $(\ref{eq:stra})$ gives a stratification of $\mathbb{C}^{k+1}$ associated to the poset $\mathcal{P}_{[k+1]}$.
	
	\subsubsection{Unfolding of an HTL normal form}\label{sec:unfoldspec}
	
	\begin{dfn}[Unfolding of an HTL normal form]\label{def:unfold}
	The  {\em unfolding of the irregular type} $H_{\mathrm{irr}}$ of $H$ is the function from $\mathbb{C}^{k+1}$
	to $M_{n}(\mathbb{C}(z))dz$
	of the form,
	\begin{multline*}
		H_{\mathrm{irr}}(c_{0},c_{1},\ldots,c_{k}):=\\
		\left(\frac{S_{k}}{(z-c_{1})\cdots(z-c_{k})}
		+\frac{S_{k-1}}{(z-c_{1})\cdots(z-c_{k-1})}+\cdots +\frac{S_{1}}{z-c_{1}}
		\right)\frac{dz}{z-c_{0}}.
	\end{multline*}
	We also define the {\em unfolding of $H$} by 
	\[
		H(c_{0},c_{1},\ldots,c_{k}):=H_{\mathrm{irr}}(c_{0},c_{1},\ldots,c_{k})+(S_{0}+N_{0})\frac{dz}{z-c_{0}}	
	\]
	and call $H_{\mathrm{irr}}(c_{0},c_{1},\ldots,c_{k})$ the {\em irregular part of the unfolding} $H(c_{0},c_{1},\ldots,c_{k})$.
	\end{dfn}
	Let $\mathbb{C}^{k+1}=\bigsqcup_{\mathcal{I}\in \mathcal{P}_{[k+1]}}C(\mathcal{I})$ be the stratification of $\mathbb{C}^{k+1}$
	defined in the previous section. We shall compute the spectral types of $H(c_{1},c_{2},\ldots,c_{k})$
	for the parameters $(c_{0},c_{1},\ldots,c_{k})$ on these strata $C(\mathcal{I})$.
	
	Fix a partition
	$\mathcal{I}\colon I_{0},I_{1},\ldots,I_{r} \in \mathcal{P}_{[k+1]}$ and
	consider the embedding $\iota_{\mathcal{I}}\colon \mathbb{C}^{r+1}\hookrightarrow \mathbb{C}^{k+1}$
	defined previously.
	Then since $C(\mathcal{I})=\{\iota_{\mathcal{I}}(\mathsf{c})\in \mathbb{C}^{k+1}\mid \mathsf{c}\in C_{r+1}(\mathbb{C})\}$,
	we can introduce a parameterization of the unfolding of $H$ on $C(\mathcal{I})$ by 
	setting
	\[
		H_{\mathcal{I}}(\mathsf{c}):=H(\iota_{\mathcal{I}}(\mathsf{c}))\quad (\mathsf{c}\in C_{r+1}(\mathbb{C}))
	\]
	Let us write $(c_{0},c_{1},\ldots,c_{r})\in C_{r}(\mathbb{C})$. Then $H_{\mathcal{I}}(\mathsf{c})\in M_{n}(\mathbb{C}(z))dz$
	has poles at $c_{j}$ of order at most $k_{j}+1=|I_{j}|$ for $j=0,1,\ldots,r$. Thus 
	the partial fractional decomposition algorithm gives the description  
	$H_{\mathcal{I}}(\mathsf{c})=\sum_{j=0}^{r}
		\sum_{\nu=0}^{k_{i}}\frac{A^{[j]}_{\nu}}{z_{c_{j}}^{\nu+1}}
		\,dz_{c_{j}}$
	where $A^{[j]}_{\nu}\in M_{n}(\mathbb{C})$ and $z_{c_{j}}=z-c_{j}$. We denote each component of the sum by 
	\[
		H_{\mathcal{I}}(\mathsf{c})_{z_{c_{j}}}:=\sum_{\nu=0}^{k_{i}}\frac{A^{[j]}_{\nu}}{z_{c_{j}}^{\nu+1}}
	\,dz_{c_{j}}.
	\]
	
	\begin{prp}[\cite{H}]\label{prop:spectral type}
		Let us suppose that $\iota_{\mathcal{I}}(\mathsf{c})\in \mathbb{D}(H)$.
		Then $H_{\mathcal{I}}(\mathsf{c})_{z_{c_{j}}}$
		become unramified HTL-normal forms with 
		the spectral types 
	\[
		\begin{cases}
		(\mathbf{m}_{i_{[j,0]}}\le_{\phi_{i_{[j,0]}}^{i_{[j,1]}}}\mathbf{m}_{i_{[j,1]}}\le_{\phi_{i_{[j,1]}}^{i_{[j,2]}}}\cdots\le_{\phi_{i_{[j,k_{j}-1]}}^{i_{[j,k_{j}]}}} \mathbf{m}_{i_{[j,k_{j}]}},\mathrm{triv})&\text{ for }
	j=1,2,\ldots,r,\\
		(\mathbf{m}_{i_{[0,0]}}\le_{\phi_{i_{[0,0]}}^{i_{[0,1]}}}\mathbf{m}_{i_{[0,1]}}\le_{\phi_{i_{[0,1]}}^{i_{[0,2]}}}\cdots\le_{\phi_{i_{[0,k_{0}-1]}}^{i_{[0,k_{0}]}}} \mathbf{m}_{i_{[0,k_{0}]}},\sigma(\mathbf{m}_{0}))&\text{ for }
	j=0
		\end{cases}.
	\]
	\end{prp}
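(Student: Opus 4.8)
The plan is to fix a partition $\mathcal{I}\colon I_0,I_1,\ldots,I_r$ together with a point $\mathsf{c}=(c_0,\ldots,c_r)\in C_{r+1}(\mathbb{C})$ such that $\iota_{\mathcal{I}}(\mathsf{c})\in \mathbb{D}(H)$, and to read off the principal parts of the rational coefficient of $H_{\mathcal{I}}(\mathsf{c})$ at each pole $c_j$. Writing $\tilde{c}_m:=\iota_{\mathcal{I}}(\mathsf{c})_m$ for $m=0,1,\ldots,k$, so that $\tilde{c}_m=c_j$ exactly when $m\in I_j$, the coefficient of $H_{\mathcal{I}}(\mathsf{c})$ is
\[
\frac{1}{z-\tilde{c}_0}\left(\sum_{l=1}^{k}S_l\prod_{m=1}^{l}\frac{1}{z-\tilde{c}_m}+S_0+N_0\right).
\]
Since $S_0,S_1,\ldots,S_k$ are simultaneously diagonal in the eigenbasis attached to the decomposition $\mathbb{C}^{n}=\bigoplus_{p}V_{\langle 0,p\rangle}$, and $N_0$ commutes with all of them, the entire computation splits along this basis: on a common eigenline the coefficient becomes the scalar rational function obtained by replacing each $S_l$ by its eigenvalue $s^{(l)}$, while $N_0$ contributes only a fixed nilpotent endomorphism. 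First I would record this reduction, so that it suffices to analyse the scalar principal parts together with the single matrix $N_0$.

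Next I would determine the pole order and the leading Laurent coefficients at each $c_j$. Writing $I_j=\{i_{[j,0]}<\cdots<i_{[j,k_j]}\}$, a direct count shows that the summand $S_l\prod_{m=1}^{l}(z-\tilde{c}_m)^{-1}$ contributes to the pole at $c_j$ of order $t+1$ precisely when $i_{[j,t]}\le l<i_{[j,t+1]}$ (with $i_{[j,k_j+1]}:=\infty$), where for $j=0$ the prefactor $(z-\tilde{c}_0)^{-1}$ supplies the extra unit coming from $i_{[0,0]}=0$; all remaining product factors, and for $j\neq 0$ the prefactor, are holomorphic and nonvanishing at $c_j$. In particular the pole has order exactly $|I_j|=k_j+1$. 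Collecting the contributions to the top order $(z-c_j)^{-(k_j+1)}$, I would check that the resulting coefficient, restricted to the eigenline labelled $p$, agrees up to a common nonzero scalar (a product of differences $c_j-\tilde{c}_m$, together with $c_j-\tilde{c}_0$) with the value $\alpha(\iota_{\mathcal{I}}(\mathsf{c}))_{\langle i_{[j,k_j]},p\rangle}$ of the polynomial introduced in Section \ref{sec:openset}. Consequently the coarsest local partition, read from the most singular coefficient, is exactly $\mathbf{m}_{i_{[j,k_j]}}$, two eigenlines $p,p'$ being separated here unless $\iota_{\mathcal{I}}(\mathsf{c})\in D_{i_{[j,k_j]};p,p'}$.

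The remaining steps are essentially bookkeeping. Descending to lower orders produces a filtration of $\mathbb{C}^n$ that refines $\mathbf{m}_{i_{[j,k_j]}}$ through $\mathbf{m}_{i_{[j,k_j-1]}},\ldots,\mathbf{m}_{i_{[j,0]}}$: among two eigenlines that already agree at all higher orders (hence lie in the same block of $\mathbf{m}_{i_{[j,t+1]}}$), their comparison at order $t+1$ is governed by the factorisation of $\alpha_{\langle i,p\rangle}-\alpha_{\langle i,p'\rangle}$ established in Section \ref{sec:openset}, which isolates the factor $\alpha^{*}(\iota_{\mathcal{I}}(\mathsf{c}))_{i_{[j,t]};p,p'}$. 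Here the hypothesis $\iota_{\mathcal{I}}(\mathsf{c})\in\mathbb{D}(H)$ is exactly what is needed: the inequalities $\alpha^{*}(\iota_{\mathcal{I}}(\mathsf{c}))_{i;p,p'}\neq 0$ guarantee that blocks of $\mathbf{m}_{i}$ which are genuinely distinct remain distinct after specialisation, so the filtration yields precisely the subsequence $\mathbf{m}_{i_{[j,0]}}\le\cdots\le\mathbf{m}_{i_{[j,k_j]}}$ with the correct block dimensions and refinement maps $\phi^{i_{[j,t+1]}}_{i_{[j,t]}}$, rather than an accidental coarsening. Since the leading coefficients are diagonal in the common eigenbasis they are automatically semisimple and commuting, and the only nilpotent contribution is $N_0$, entering the residue at $c_0$ through $(S_0+N_0)(z-\tilde{c}_0)^{-1}$; because $0\in I_0$ this occurs only for $j=0$, giving nilpotent part $N_0$ with signature $\sigma(\mathbf{m}_0)$, while the nilpotent part is trivial for $j\ge 1$. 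This shows each $H_{\mathcal{I}}(\mathsf{c})_{z_{c_j}}$ is an unramified HTL normal form of the asserted spectral type. The main obstacle is the second and third steps together: tracking precisely which summands feed each order of the pole at $c_j$, and matching the resulting Laurent coefficients to the $\alpha$- and $\alpha^{*}$-polynomials, so that the $\mathbb{D}(H)$-condition is seen to force exactly the simultaneous-eigenspace dimensions of the claimed refinement subsequence.
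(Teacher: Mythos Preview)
The paper does not actually prove this proposition: it is stated with the citation \cite{H} and, as the introduction makes explicit, detailed proofs are deferred to that forthcoming paper. So there is no argument in the present note to compare your proposal against.

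That said, your plan is a sound and natural one, and it is plausibly close to what the full proof in \cite{H} will contain, since the polynomials $\alpha_{\langle i,j\rangle}$ and $\alpha^{*}_{i;j,j'}$ in Section~\ref{sec:openset} are visibly engineered to control exactly the Laurent coefficients you describe. Your identification of the top Laurent coefficient at $c_j$ with a nonzero scalar multiple of $\alpha(\iota_{\mathcal{I}}(\mathsf{c}))_{\langle i_{[j,k_j]},\,\cdot\,\rangle}$ is correct: writing $\{0,\ldots,k\}\setminus I_j=(\{0,\ldots,l\}\setminus I_j)\sqcup\{l+1,\ldots,k\}$ for $l\ge i_{[j,k_j]}$ converts the reciprocal product $\prod_{m\in\{0,\ldots,l\}\setminus I_j}(c_j-\tilde c_m)^{-1}$ into the product $\prod_{l<\nu\le k}(c_j-\tilde c_\nu)$ appearing in $\alpha$, up to the fixed nonzero factor $\prod_{m\in\{0,\ldots,k\}\setminus I_j}(c_j-\tilde c_m)^{-1}$.

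The one place where your sketch is genuinely thin is the ``bookkeeping'' for the intermediate Laurent coefficients $A^{[j]}_{t}$ with $0\le t<k_j$. Unlike the top coefficient, each $A^{[j]}_{t}$ receives contributions from \emph{all} summands $S_l$ with $l\ge i_{[j,t]}$, including those that also feed higher-order poles, and the resulting expression is not simply a rescaling of $\alpha_{\langle i_{[j,t]},\,\cdot\,\rangle}$. What one actually needs is that, on a block of $\mathbf{m}_{i_{[j,t+1]}}$ where the eigenvalues of $A^{[j]}_{t+1},\ldots,A^{[j]}_{k_j}$ already coincide, the \emph{difference} of the $A^{[j]}_{t}$-eigenvalues on two sub-blocks $p,p'$ reduces (after cancelling the common higher-order contributions) to a nonzero scalar times $\alpha^{*}(\iota_{\mathcal{I}}(\mathsf{c}))_{i_{[j,t]};p,p'}$. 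This is where the factorisation displayed just before the definition of $\alpha^{*}$ does the work, but making the inductive step precise---in particular, showing that the ``common'' contributions from $l\ge i_{[j,t+1]}$ genuinely cancel in the difference and that the remaining scalar factor is nonzero because no $\nu\in I_j$ with $\nu>i_{[j,t]}$ can satisfy $\tilde c_\nu=c_j$ (trivially true) while the other $\nu$ give $\tilde c_\nu\neq c_j$---requires more than a sentence. Your outline correctly locates this as the crux, but a complete proof would have to carry out this induction explicitly.
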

	This proposition says that the each fiber of the unfolding of $H$
	is a sum of unramified HTL normal forms $H_{\mathcal{I}}(\mathsf{c})_{z_{c_{j}}}$
	and  
	spectral types of them 
	do not depend on the parameter $\mathsf{c}$ but only on the stratum $C(\mathcal{I})
	\subset \mathbb{C}^{k+1}$. 
	
	\subsection{Deformation of $\mathbb{O}_{H_{\mathrm{irr}}}$}
	In the previous section, we introduced the holomorphic family 
	$(H(\mathbf{c}))_{\mathbf{c}\in \mathbb{D}(H)}$
	and explained that each fiber $H(\mathbf{c})$ is a sum of unramified HTL normal forms.
	Next we shall construct a deformation of the truncated orbit $\mathbb{O}_{H}$ 
	in accordance with this family $(H(\mathbf{c}))_{\mathbf{c}\in \mathbb{D}(H)}$.
	For this purpose, we first explain the construction of a deformation of the orbit 
	$\mathbb{O}_{H_{\mathrm{irr}}}$ of the irregular type.
	
	\subsubsection{Deformations of $\mathbb{C}[z]_{l}$ and $\mathbb{C}[z^{-1}]_{l}dz$}
	To a point $\mathbf{c}=(c_{0},c_{1},\ldots,c_{k})\in \mathbb{C}^{k+1}$, we associate
	an effective divisor 
	\[
		D(\mathbf{c}):=c_{0}+c_{1}+\cdots +c_{k}	
	\]
	of $\mathbb{A}^{1}(\mathbb{C})=\mathbb{C}$. Then we consider a subspace of rational 1-forms
	\begin{equation*}
		\varOmega^{\mathrm{rat}}(D(\mathbf{c})):=\left\{f\in \mathbb{C}(z)dz\,\middle|\,
		\mathrm{ord}_{P}(f)\ge -n_{P}\text{ for all }P\in \mathbb{C}\right\}\\
	\end{equation*}
	where $n_{P}$ is the coefficient of $P\in \mathbb{C}$ in 
	the formal sum $D(\mathbf{c})=\sum_{P\in\mathbb{C}}n_{P}P$, and 
	$\mathrm{ord}_{P}(f)$ is the order of $f$ at $P$.
	We can regard 
	$\varOmega^{\mathrm{rat}}(D(\mathbf{c}))$ as $\mathbb{C}[z]$-module containing 
	$\mathbb{C}[z]dz$ as a submodule and consider
	a quotient module 
	\[
		\widehat{\varOmega}(\mathbf{c}):=\varOmega^{\mathrm{rat}}_{\mathbb{C}}(D(\mathbf{c}))/\mathbb{C}[z]dz.	
	\]
	We also define the linear system
	\[
		L^{\mathrm{rat}}(-D(\mathbf{c})):=\left\{f\in \mathbb{C}(z)\mid \mathrm{ord}_{P}(f)
		\ge n_{P}\text{ for all }P\in \mathbb{C}\right\}.
	\]
	Since $D(\mathbf{c})$ is an effective divisor, i.e., $n_{P}\ge 0$ for all $P\in \mathbb{C}$, we may
	regard $L^{\mathrm{rat}}(-D(\mathbf{c}))$ as an ideal of $\mathbb{C}[z]$, namely,
	\[
		L^{\mathrm{rat}}(-D(\mathbf{c}))=\left\langle\prod_{i=0}^{k}(z-c_{i})\right\rangle_{\mathbb{C}[z]},	
	\]
	and take the
	quotient ring 
	\[
		\widehat{\mathbb{C}[z]}(\mathbf{c}):=\mathbb{C}[z]/L^{\mathrm{rat}}(-D(\mathbf{c})).	
	\]
	We denote the class of $f(z)\in \mathbb{C}[z]$
	in $\widehat{\mathbb{C}[z]}(\mathbf{c})$ and that of $g(z)dz\in \varOmega^{\mathrm{rat}}_{\mathbb{C}}(D(\mathbf{c}))$
	in $\widehat{\varOmega}_{\mathbb{C}}(\mathbf{c})$
	by the same notations, if it may not cause any confusion.
	\begin{dfn}[Residue and evaluation maps]
		For $l\in\{0,1,\ldots,k\}$,
		we define the {\em residue map} at $c_{l}$ by
	\[
		\mathrm{res}_{z=c_{l}}([f(z)dz]):=\mathrm{res}_{z=c_{l}}f(z)dz	
	\]
	for $[f(z)dz]\in \widehat{\varOmega}(\mathbf{c})$, the class of $f(z)\in \varOmega_{\mathbb{C}}^{\mathrm{rat}}(D(\mathbf{c}))$.
	Further, we define the residue at $\infty$ by 
	\[
		\mathrm{res}_{z=\infty}([f(z)dz]):=-\mathrm{res}_{w=0}f(1/w)\frac{dw}{w^{2}}.
	\]
	We note the all these residue maps are well-defined in $\widehat{\varOmega}(\mathbf{c})$.
	
	Also we define the {\em evaluation map} $\mathrm{ev}_{c_{j}}\colon \widehat{\mathbb{C}[z]}(\mathbf{c})
		\rightarrow \mathbb{C}$ at $c_{j}$ by 
		\[
			\mathrm{ev}_{c_{j}}\colon \widehat{\mathbb{C}[z]}(\mathbf{c})=\mathbb{C}[z]/\langle \prod_{\mu=0}^{k}(z-c_{\mu})\rangle_{\mathbb{C}[z]}
			\longrightarrow \mathbb{C}[z]/\langle z-c_{j}\rangle_{\mathbb{C}[z]}= \mathbb{C},
		\]
		the natural projection induced by the inclusion $\langle \prod_{\mu=0}^{k}(z-c_{\mu})\rangle_{\mathbb{C}[z]}
		\subset \langle z-c_{j}\rangle_{\mathbb{C}[z]}$.
		Here we identify $\mathbb{C}[z]/\langle z-c_{j}\rangle_{\mathbb{C}[z]}= \mathbb{C}$
		by the unique $\mathbb{C}$-algebra isomorphism.
		Usually we simply write $f(c_{j}):=\mathrm{ev}_{c_{j}}(f(z))$ for $f(z)\in \widehat{\mathbb{C}[z]}(\mathbf{c})$.
	\end{dfn}
	
	Note that $\widehat{\varOmega}(\mathbf{c})$ is 
	a $\widehat{\mathbb{C}[z]}(\mathbf{c})$-module, since elements of $L^{\mathrm{rat}}(-D(\mathbf{c}))
	\subset\mathbb{C}[z]$ act trivially on $\widehat{\varOmega}_{\mathbb{C}}(\mathbf{c})$.
	Since
	we have the isomorphisms
	\[
		\widehat{\mathbb{C}[z]}(\mathbf{0})\cong\mathbb{C}[z]_{k},\quad\quad
		\widehat{\varOmega}(\mathbf{0})\cong \mathbb{C}[z^{-1}]_{k}dz,
	\]
	we can regard $\widehat{\mathbb{C}[z]}(\mathbf{c})$ and $\widehat{\Omega}(\mathbf{c})$
	as deformations of $\mathbb{C}[z]_{k}$ and $\mathbb{C}[z^{-1}]_{k}dz$.
	
	Let us define a filtration on  $\widehat{\varOmega}(\mathbf{c})$
	and cofiltration on $\widehat{\mathbb{C}[z]}(\mathbf{c})$.
	Consider projection maps $\mathrm{pr}^{(l)}\colon \mathbb{C}^{k+1}\ni(x_{0},x_{1},\ldots,x_{k})\mapsto 
	(x_{0},x_{1},\ldots,x_{l})\in \mathbb{C}^{l+1}$
	and define divisors 
	$D(\mathbf{c})_{\le l}:=D(\mathrm{pr}^{(l)}(\mathbf{c}))$  for $l=0,1,\ldots,k$.
	Then we obtain the following filtration
	\[
		\varOmega^{\mathrm{rat}}(D(\mathbf{c})_{\le 0})\subset \varOmega^{\mathrm{rat}}(D(\mathbf{c})_{\le 1})\subset 
		\cdots \subset \varOmega^{\mathrm{rat}}(D(\mathbf{c})_{\le k})=\varOmega^{\mathrm{rat}}(D(\mathbf{c})).	
	\]
	\begin{dfn}[Standard filtration and basis of $\widehat{\varOmega}(\mathbf{c})$]
		Let us set 
		\[
			\widehat{\varOmega}(\mathbf{c})_{l}:=\varOmega^{\mathrm{rat}}(D(\mathbf{c})_{\le l})/
			\mathbb{C}[z]dz\text{ for } l=0,1,\ldots,k.	
		\] The 
		filtration 
		\[
			\widehat{\varOmega}(\mathbf{c})_{0}\subset \widehat{\varOmega}(\mathbf{c})_{1}
			\subset \cdots \subset \widehat{\varOmega}(\mathbf{c})_{k}=\widehat{\varOmega}(\mathbf{c})
		\]
		induced by the above filtration of $\varOmega^{\mathrm{rat}}_{\mathbb{C}}(D(\mathbf{c}))$
		is called the {\em standard filtration} of $\widehat{\varOmega}(\mathbf{c})$.
		The basis of $\widehat{\varOmega}(\mathbf{c})$,
	\[
		\frac{1}{z-c_{0}},\,\frac{1}{(z-c_{0})(z-c_{1})},\ldots,\frac{1}{(z-c_{0})(z-c_{1})\cdots(z-c_{k})}	
	\]
	as $\mathbb{C}$-vector space is called the  {\em standard basis} of $\widehat{\varOmega}(\mathbf{c})$.
	\end{dfn} 
	Let us note that 
	$\frac{1}{z-c_{0}},\,\frac{1}{(z-c_{0})(z-c_{1})},\ldots,\frac{1}{(z-c_{0})(z-c_{1})\cdots(z-c_{l})}$
	becomes a basis of the $l$-th component $\widehat{\varOmega}(\mathbf{c})_{\le l}$
	of the standard filtration for each $l=0,1,\ldots,k$.
	
	Similarly, we can consider the filtration 
	\[
		\langle z-c_{0}\rangle_{\mathbb{C}[z]}\supset \langle (z-c_{0})(z-c_{1})\rangle_{\mathbb{C}[z]} \supset \cdots
	\supset \left\langle\prod_{i=0}^{k}(z-c_{i})\right\rangle_{\mathbb{C}[z]}  	
	\]
	of ideals on $\mathbb{C}[z]$.
	\begin{dfn}[Standard cofiltration and basis of $\widehat{\mathbb{C}[z]}(\mathbf{c})$]
		Let us set 
		\[
			\widehat{\mathbb{C}[z]}(\mathbf{c})_{l}:=\mathbb{C}[z]/\langle\prod_{i=0}^{l}(z-c_{i})\rangle_{\mathbb{C}[z]}
			\text{ for }l=0,1,\ldots,k.
		\]
		Then the sequence of projection maps 
		\[
			\widehat{\mathbb{C}[z]}(\mathbf{c})_{0}\rightarrow \widehat{\mathbb{C}[z]}(\mathbf{c})_{1}
			\rightarrow\cdots\rightarrow \widehat{\mathbb{C}[z]}(\mathbf{c})_{k}=\widehat{\mathbb{C}[z]}(\mathbf{c})
		\]
		induced by the above filtration of ideals of $\mathbb{C}[z]$ is called the 
		{\em standard cofiltration} of $\widehat{\mathbb{C}[z]}(\mathbf{c})$.
		The basis of $\widehat{\mathbb{C}[z]}(\mathbf{c})$,
		\[
		1, (z-c_{0}), (z-c_{0})(z-c_{1}),\ldots, (z-c_{0})(z-c_{1})\cdots (z-c_{k-1})
		\]
		as $\mathbb{C}$-vector space is called the {\em standard basis} of $\widehat{\mathbb{C}[z]}(\mathbf{c})$.
	\end{dfn}
	
	Next we introduce the pairing of $\widehat{\mathbb{C}[z]}(\mathbf{c})_{l}$ and $\widehat{\varOmega}(\mathbf{c})_{l}$
	as $\mathbb{C}$-vector spaces defined by 
	\[
		\begin{array}{cccc}
		\langle\ ,\ \rangle_{\mathbf{c},l}\colon 	&\widehat{\mathbb{C}[z]}(\mathbf{c})_{l} \times \widehat{\varOmega}(\mathbf{c})_{l}&
		\longrightarrow &\mathbb{C}\\
		&(f(z),g(z)dz)&\longmapsto &-\mathrm{res}_{z=\infty}(f(z)\cdot g(z)dz)
		\end{array}
	\]
	for each $l=0,1,\ldots,k$. Then we can show that 
		the paring $\langle\ ,\ \rangle_{\mathbf{c},l}$ is non-degenerate
		and 
		the bases $$1, (z-c_{0}), (z-c_{0})(z-c_{1}),\ldots, (z-c_{0})(z-c_{1})\cdots (z-c_{l-1})$$
		and $$\frac{1}{z-c_{0}},\,\frac{1}{(z-c_{0})(z-c_{1})},\ldots,\frac{1}{(z-c_{0})(z-c_{1})\cdots(z-c_{l})}$$
		are dual bases with respect to this paring.
	
	\subsubsection{Partial fraction decomposition}\label{sec:pfd}
	In Section \ref{sec:unfoldspec}, we saw that each fiber $H_{\mathcal{I}}(\mathsf{c})$
	of the unfolding of $H$
	on a stratum $C(\mathcal{I})\subset \mathbb{C}^{k+1}$
	 decomposes into the sum of unramified HTL normal form by 
	the partial fraction decomposition algorithm. Now we explain that
	on each stratum $C(\mathcal{I})$,
	the partial fraction decomposition gives an decomposition of $\widehat{\mathbb{C}[z]}(\mathbf{c})_{l}$-module 
	$\widehat{\varOmega}(\mathbf{c})_{l}$. 
	
	Let us take a partition $\mathcal{I}\colon I_{1},I_{2},\ldots,I_{r}$
	of the finite set $\{0,1,\ldots,k\}$. 
	Then for each $l=0,1,\ldots,k$, we define a partition
	 $\mathcal{I}^{(l)}\colon I_{1}^{(l)},I_{2}^{(l)},\ldots,I_{r}^{(l)}$ of the subset $\{0,1,\ldots,l\}\subset
	\{0,1,\ldots,k\}$ by 
	$I^{(l)}_{j}:=I_{j}\cap \{0,1,\ldots,l\}$,
	allowing $I^{(l)}_{j}=\emptyset$.
	Let us set 
	$
		k^{(l)}_{j}:=|I^{(l)}_{j}|-1.	
	$
	Let us take $\mathsf{c}=(c_{0},c_{1},\ldots,c_{r})\in C_{r+1}(\mathbb{C})$
	so that  $\iota_{\mathcal{I}}(\mathsf{c})\in C(\mathcal{I})$ .
	Then we have $D(\iota_{\mathcal{I}}(\mathsf{c}))_{\le l}=\sum_{j=0}^{r}(k_{j}^{(l)}+1)\cdot c_{j}$.
	The algorithm of the partial fraction decomposition of rational functions gives us the direct sum decomposition
	$$\widehat{\varOmega}(\iota_{\mathcal{I}}(\mathsf{c}))_{l}=
	\bigoplus_{j=0}^{r}\mathbb{C}[(z-c_{j})^{-1}]_{k_{j}^{(i)}}$$
	as $\widehat{\mathbb{C}[z]}(\iota_{\mathcal{I}}(\mathsf{c}))_{l}$-modules.
	
	We also have a similar decomposition of $\widehat{\mathbb{C}[z]}(\iota_{\mathcal{I}}(\mathsf{c}))_{l}$.
	Since we have $c_{i}\neq c_{j}$ for $i\neq j$, it follows that 
	\begin{align*}
		&\langle (z-c_{i})^{k_{i}^{(l)}} \rangle_{\mathbb{C}[z]}+\langle (z-c_{i'})^{k_{i'}^{(l)}} \rangle_{\mathbb{C}[z]}=\mathbb{C}[z],\quad i\neq i',\\
		&\bigcap_{i=0}^{l}\langle (z-c_{i})^{k_{i}^{(l)}} \rangle_{\mathbb{C}[z]}=L^{\mathrm{rat}}(-D(\iota_{\mathcal{I}}(\mathbf{c}))_{\le l}).
	\end{align*}
	Therefore the Chinese remainder theorem implies that the map
	\[
		\begin{array}{cccc}
		\prod_{i=0}^{l}\mathrm{pr}_{c_{i}}\colon&\widehat{\mathbb{C}[z]}(\iota_{\mathcal{I}}(\mathsf{c}))_{l}&\longrightarrow &\prod_{i=0}^{l}\mathbb{C}[z-c_{i}]_{k_{i}^{(l)}}\\
		&f(z)& \longmapsto & (\mathrm{pr}_{c_{i}}(f(z))_{i=0,1,\ldots,l}
		\end{array}	
	\]
	is an algebra isomorphism.
	Here 
	\[
		\mathrm{pr}_{c_{i}}\colon \widehat{\mathbb{C}[z]}(\iota_{\mathcal{I}}(\mathsf{c}))_{l}\longrightarrow \\
	\mathbb{C}[z]/\langle(z-c_{i})^{k_{i}^{(l)}+1}\rangle_{\mathbb{C}[z]}=\mathbb{C}[z-c_{i}]_{k_{i}^{(l)}}
	\]
	are the natural projections for $i=0,1,\ldots,l$.
	
	If we note that the ideal $\langle(z-c_{i})^{k_{i}^{(l)}+1}\rangle_{\mathbb{C}[z]}$
	annihilates $\mathbb{C}[(z-c_{i})^{-1}]_{k_{i}^{(l)}}$, it follows that the diagrams
	\[
			\begin{tikzcd}
				\widehat{\mathbb{C}[z]}(\iota_{\mathcal{I}}(\mathsf{c}))_{l}\times \mathbb{C}[(z-c_{i})^{-1}]_{k_{i}^{(l)}} \arrow[d,"\mathrm{pr}_{c_{i}}\times \mathrm{id}"]\arrow[r,"\cdot "] &\mathbb{C}[(z-c_{i})^{-1}]_{k_{i}^{(l)}} \arrow[d,"\mathrm{id}"]\\
				\mathbb{C}[z-c_{i}]_{k_{i}^{(l)}}\times\mathbb{C}[(z-c_{i})^{-1}]_{k_{i}^{(l)}}  \arrow[r,"\cdot "]&\mathbb{C}[(z-c_{i})^{-1}]_{k_{i}^{(l)}}
			\end{tikzcd}	
	\]
	are commutative. Here horizontal maps are scalar multiplications on $\mathbb{C}[z-c_{i}]_{k_{i}^{(l)}}$ as $\widehat{\mathbb{C}[z]}(\iota_{\mathcal{I}}(\mathsf{c}))_{l}$
	and $\mathbb{C}[z-c_{i}]_{k_{i}^{(l)}}$ modules respectively.
	Further, we obtain the following commutative diagram,
	\[
		\begin{tikzcd}
			\widehat{\mathbb{C}[z]}(\iota_{\mathcal{I}}(\mathsf{c}))_{l}\times \widehat{\varOmega}(\iota_{\mathcal{I}}(\mathsf{c}))_{l} \arrow[d,"\prod_{j=0}^{r}(\mathrm{pr}_{c_{i}}\times \mathrm{pr}_{c_{i}})"]\arrow[r,"\cdot "] &\widehat{\varOmega}(\iota_{\mathcal{I}}(\mathsf{c}))_{l}\arrow[d,"="]\\
			\prod_{i=0}^{r}\left(\mathbb{C}[z-c_{i}]_{k_{i}^{(l)}}\times\mathbb{C}[(z-c_{i})^{-1}]_{k_{i}^{(l)}} \right) \arrow[r,"\bigoplus_{i=0}^{r}\cdot "]&\bigoplus_{i=0}^{r}\mathbb{C}[(z-c_{i})^{-1}]_{k_{i}^{(l)}}
		\end{tikzcd}.
	\]

	\subsubsection{Lie groupoid as a deformation of $N_{\mathbf{m}_{l}^{l+1}}^{-}(\mathbb{C}[z]_{l-1})^{1}$}
	For a $\mathbb{C}$-subalgebra $\mathfrak{k}\subset M_{n}(\mathbb{C})$
	and $\mathbf{c}=(c_{0},c_{1},\ldots,c_{k+1})\in \mathbb{D}(H)$, we define 
	subalgebras of $M_{n}(\widehat{\mathbb{C}[z]}(\mathbf{c})_{l-1}):=M_{n}(\mathbb{C})\otimes_{\mathbb{C}}
	\widehat{\mathbb{C}[z]}(\mathbf{c})_{l-1}$ by 
	\begin{align*}
		\mathfrak{k}(\widehat{\mathbb{C}[z]}(\mathbf{c})_{l-1})&:=
		\left\{X=X_{0}+\sum_{i=1}^{l-1}X_{i}(z-c_{0})(z-c_{1})\cdots(z-c_{i-1})\,\middle|\, X_{i}\in \mathfrak{k}\right\},\\
		\mathfrak{k}(\widehat{\mathbb{C}[z]}(\mathbf{c})_{l-1})^{1}&:=
		\left\{X\in \mathfrak{k}(\widehat{\mathbb{C}[z]}(\mathbf{c}))_{l-1}\mid X_{0}=0\right\}.
	\end{align*}
	
	Then we have the natural direct sum decomposition as $\mathbb{C}$-algebras,
	\[
		\mathfrak{k}(\widehat{\mathbb{C}[z]}(\mathbf{c})_{l-1})=\mathfrak{k}\oplus \mathfrak{k}(\widehat{\mathbb{C}[z]}(\mathbf{c})_{l-1})^{1}.
	\]
	Moreover  we define $\mathbb{C}$-vector subspaces of $M_{n}(\widehat{\varOmega}(\mathbf{c})_{l-1}):=M_{n}(\mathbb{C})\otimes_{\mathbb{C}}
	\widehat{\varOmega}(\mathbf{c})_{l-1}$ by
	\begin{align*}
		\mathfrak{k}(\widehat{\varOmega}(\mathbf{c})_{l-1})&:=
		\left\{X=\sum_{i=0}^{l-1}\frac{X_{i}}{(z-c_{0})(z-c_{1})\cdots(z-c_{i})}dz\,\middle|\, X_{i}\in \mathfrak{k}\right\},\\
		\mathfrak{k}(\widehat{\varOmega}(\mathbf{c})_{l-1})^{1}&:=
		\{X\in \mathfrak{k}(\widehat{\varOmega}(\mathbf{c})_{l-1})\mid X_{0}=0\}.
	\end{align*}

	Especially for $\mathfrak{n}_{\mathbf{m}_{l}^{l+1}}^{-}\subset M_{n}(\mathbb{C})$, we
	define 
	\begin{align*}
		N_{\mathbf{m}_{l}^{l+1}}^{-}(\widehat{\mathbb{C}[z]}(\mathbf{c})_{l-1})&:=
		\left\{E_{n}+X\mid X\in \mathfrak{n}_{\mathbf{m}_{l}^{l+1}}^{-}(\widehat{\mathbb{C}[z]}(\mathbf{c})_{l-1})\right\},\\
		N_{\mathbf{m}_{l}^{l+1}}^{-}(\widehat{\mathbb{C}[z]}(\mathbf{c})_{l-1})^{1}&:=
		\left\{E_{n}+X\mid X\in \mathfrak{n}_{\mathbf{m}_{l}^{l+1}}^{-}(\widehat{\mathbb{C}[z]}(\mathbf{c})_{l-1})^{1}\right\},
	\end{align*}
	which
	are equipped with the Lie group structures and  
	 their corresponding Lie algebras are isomorphic to $\mathfrak{n}_{\mathbf{m}_{l}^{l+1}}^{-}(\widehat{\mathbb{C}[z]}(\mathbf{c})_{l-1})$
	and $\mathfrak{n}_{\mathbf{m}_{l}^{l+1}}^{-}(\widehat{\mathbb{C}[z]}(\mathbf{c})_{l-1})^{1}$ respectively.
	
	The paring 
	\begin{equation}\label{eq:paring}
		\begin{array}{ccc}
		\mathfrak{n}_{\mathbf{m}_{l}^{l+1}}^{-}(\widehat{\mathbb{C}[z]}(\mathbf{c}))_{l-1}\times 
		\mathfrak{n}_{\mathbf{m}_{l}^{l+1}}^{+}(\widehat{\varOmega}(\mathbf{c}))_{l-1}&
		\longrightarrow& \mathbb{C}\\
		(X,Y)&\longmapsto& -\mathrm{tr}(\mathrm{res}_{z=\infty}(X\cdot Y)).
		\end{array}
	\end{equation}
	is shown to be
	non-degenerate and thus gives the identifications 
	\[
		\mathfrak{n}_{\mathbf{m}_{l}^{l+1}}^{+}(\widehat{\varOmega}(\mathbf{c})_{l-1})\cong 
		\mathfrak{n}_{\mathbf{m}_{l}^{l+1}}^{-}(\widehat{\mathbb{C}[z]}(\mathbf{c})_{l-1})^{*},\ 
		\mathfrak{n}_{\mathbf{m}_{l}^{l+1}}^{+}(\widehat{\varOmega}(\mathbf{c})_{l-1})^{1}\cong 
		(\mathfrak{n}_{\mathbf{m}_{l}^{l+1}}^{-}(\widehat{\mathbb{C}[z]}(\mathbf{c})_{l-1})^{1})^{*}.
	\]
	Similarly, by setting $\mathfrak{p}_{\mathbf{m}_{l}^{l+1}}^{+}=\mathfrak{l}_{\mathbf{m}_{l}}\oplus \mathfrak{n}_{\mathbf{m}_{l}^{l+1}}^{+}$
	and $\mathfrak{p}_{\mathbf{m}_{l}^{l+1}}^{-}=\mathfrak{n}_{\mathbf{m}_{l}^{l+1}}^{-}\oplus \mathfrak{l}_{\mathbf{m}_{l}}$,
	we have
	\[
		\mathfrak{p}^{+}_{\mathbf{m}_{l}^{l+1}}(\widehat{\varOmega}(\mathbf{c})_{l-1})\cong 
		(\mathfrak{p}_{\mathbf{m}_{l}^{l+1}}^{-}(\widehat{\mathbb{C}[z]}(\mathbf{c}))_{l-1})^{*}.
	\]
	
	Now let us consider the family $(N_{\mathbf{m}_{l}^{l+1}}^{-}(\widehat{\mathbb{C}[z]}(\mathbf{c})_{l-1})^{1})
	_{\mathbf{c}\in \mathbb{D}(H)}$ of complex Lie groups
	 and equip this family with a structure of complex Lie groupoid.
	\begin{dfn}[Complex Lie groupoid]
		Let us consider a complex manifold $\Gamma$ and its submanifold $\Gamma_{0}$
		with the inclusion map $i\colon \Gamma_{0}\hookrightarrow \Gamma$. 
		Let us also consider holomorphic surjective submersions $s,t\colon \Gamma\rightarrow \Gamma_{0}$
		satisfying $t\circ i=s\circ i=\mathrm{id}_{\Gamma_{0}}$.
		Further, we consider a holomorphic map 
		$
		m\colon \Gamma^{(2)}:=\{(\gamma_{1},\gamma_{2})\in \Gamma\times \Gamma\mid s(\gamma_{1})=t(\gamma_{2})\}
		\rightarrow \Gamma	
		$ satisfying $s\circ m(g_{1},g_{2})=s(g_{2})$ and $t\circ m(g_{1},g_{2})=t(g_{1})$.
		Then if the following conditions are satisfied, the tuple 
		$(\Gamma,\Gamma_{0},s,t,m)$ is called a {\em complex Lie groupoid}.
		\begin{enumerate}
			\item \textbf{Associativity:} $m(m(g_{1},g_{2}),g_{3})=m(g_{1},m(g_{2},g_{3}))$ for all $(g_{1},g_{2},g_{3})$
			satisfying $s(g_{1})=t(g_{2})$ and $s(g_{2})=t(g_{3})$.
			\item \textbf{Units:} $m(i\circ t(g),g)=g=m(g,i\circ s(g))$ for all $g\in \Gamma$.
			\item \textbf{Inverses:} For all $g\in \Gamma$, there exists $h\in \Gamma$ such that 
			$s(h)=t(g)$, $t(h)=s(g)$ and $m(g,h), m(h,g)\in \Gamma_{0}$.
		\end{enumerate}
		The maps $s$ and $t$ are called the {\em source map} and {\em target map}
		respectively, and $m$ is called the {\em multiplication map}. Also elements of the submanifold $i\colon \Gamma_{0}
		\hookrightarrow \Gamma$ are called {\em units}. 
	\end{dfn}
	Let us consider the trivial vector bundle $\theta\colon \left(\mathfrak{n}_{\mathbf{m}_{l}^{l+1}}^{-}\right)^{\oplus l}\times \mathbb{D}(H)\rightarrow \mathbb{D}(H)$
	and introduce a Lie groupoid structure on this bundle.
	As both of the source and target maps, we employ the projection map $\theta$.
	We regard $\mathbb{D}(H)$ as a submanifold of $\left(\mathfrak{n}_{\mathbf{m}_{l}^{l+1}}^{-}\right)^{\oplus l}\times \mathbb{D}(H)$
	by the zero section $i\colon \mathbb{D}(H)\ni \mathbf{c}\mapsto (\mathbf{0},\mathbf{c})\in \left(\mathfrak{n}_{\mathbf{m}_{l}^{l+1}}^{-}\right)^{\oplus (l-1)}\times \mathbb{D}(H)$.
	Let us consider a 
	linear isomorphism
	\[
		\begin{array}{cccc}
		\phi_{\mathbf{c}}\colon &\left(\mathfrak{n}_{\mathbf{m}_{l}^{l+1}}^{-}\right)^{\oplus l}&\longrightarrow &N_{\mathbf{m}_{l}^{l+1}}^{-}(\widehat{\mathbb{C}[z]}(\mathbf{c})_{l-1})\\
		&(X_{i})_{i=0,1,\ldots,l-1}&\longmapsto&
		(E_{n}+X_{0})+\sum_{i=1}^{l-1}X_{i}(z-c_{0})(z-c_{1})\cdots(z-c_{i-1})
		\end{array}.
	\]
	Then we define the multiplication $m(X(\mathbf{c}),Y(\mathbf{c}))$
	of $X(\mathbf{c}):=((X_{i}),\mathbf{c})$ and  $Y(\mathbf{c}):=((Y_{i}),\mathbf{c}) 
	\in \left(\mathfrak{n}_{\mathbf{m}_{l}^{l+1}}^{-}\right)^{\oplus l}\times \mathbb{D}(H)$
	by
	\[
		m(X(\mathbf{c}),Y(\mathbf{c})):=\phi_{\mathbf{c}}^{-1}(\phi_{\mathbf{c}}((X_{i}))\cdot \phi_{\mathbf{c}}((Y_{i}))).
	\]
	Here $\cdot$ in the right hand side of the equation is the multiplication in $N_{\mathbf{m}_{l}^{l+1}}^{-}(\widehat{\mathbb{C}[z]}(\mathbf{c})_{l-1})$.
	Then  
	the tuple 
	\[
		(N_{\mathbf{m}_{l}^{l+1}}^{-})_{\mathbb{D}(H)}:=
	\left(\left(\mathfrak{n}_{\mathbf{m}_{l}^{l+1}}^{-}\right)^{\oplus l}\times \mathbb{D}(H),
	\mathbb{D}(H),\theta,\theta,m\right)
	\]
	becomes a complex Lie groupoid.
	Together with this Lie groupoid, 
	we consider the subbundle $\left(\mathfrak{n}_{\mathbf{m}_{l}^{l+1}}^{-}\right)^{\oplus (l-1)}\times \mathbb{D}(H)\rightarrow \mathbb{D}(H)$
	with the inclusion 
	\[
		\begin{array}{ccc}
		\left(\mathfrak{n}_{\mathbf{m}_{l}^{l+1}}^{-}\right)^{\oplus (l-1)}\times \mathbb{D}(H)&
		\longrightarrow  &\left(\mathfrak{n}_{\mathbf{m}_{l}^{l+1}}^{-}\right)^{\oplus l}\times \mathbb{D}(H)\\
		((X_{1},X_{2},\ldots,X_{l-1}),\mathbf{c})&\longmapsto& 
		((0,X_{1},X_{2},\ldots,X_{l-1}),\mathbf{c})
		\end{array}.
	\]
	Then this subbundle has the natural Lie groupoid structure whose 
	fiber at each $\mathbf{c}\in \mathbb{D}(H)$ is isomorphic 
	to $N_{\mathbf{m}_{l}^{l+1}}^{-}(\widehat{\mathbb{C}[z]}(\mathbf{c})_{l-1})^{1}$.
	We especially denote this Lie subgroupoid
	by $(N^{-})^{(l)}_{\mathbb{D}(H)}$.
	
	Next we introduce complex Lie algebroids.
	\begin{dfn}[Complex Lie algebroid]
		A {\em complex Lie algebroid} over a complex manifold $M$ is a holomorphic vector bundle 
		$A\rightarrow M$, together with a Lie bracket $[\cdot,\cdot]$ on its space of holomorphic 
		sections, such that there exists a vector bundle map
		$\mathbf{a}\colon A\rightarrow TM$ called the {\em anchor map} satisfying the Leibniz rule
		\[
			[\sigma,f\tau]=f[\sigma,\tau]+(\mathbf{a}(\sigma)f)\tau	
		\]
		for all global sections $\sigma,\tau$ of $A\rightarrow M$ and $f\in \mathcal{O}_M(M)$.
	\end{dfn}
	Let $\mathfrak{k}\subset M_{n}(\mathbb{C})$
	be a $\mathbb{C}$-subalgebra. We consider a linear isomorphism
	\[
		\begin{array}{cccc}
		\psi_{\mathbf{c}}\colon &\mathfrak{k}^{\oplus l}&\longrightarrow &\mathfrak{k}(\widehat{\mathbb{C}[z]}(\mathbf{c})_{l-1})\\
		&(X_{i})_{i=1,2\ldots,l-1}&\longmapsto&
		X_{0}+\sum_{i=1}^{l-1}X_{i}(z-c_{0})(z-c_{1})\cdots(z-c_{i-1})
		\end{array},
	\]
	and define the Lie bracket $[X(\mathbf{c}),Y(\mathbf{c})]$ 
	of $X(\mathbf{c}):=((X_{i}),\mathbf{c})$ and  $Y(\mathbf{c}):=((Y_{i}),\mathbf{c}) 
	\in \mathfrak{k}^{\oplus l}\times \mathbb{D}(H)$
	by
	\[
		[(X(\mathbf{c}),Y(\mathbf{c}))]:=\psi_{\mathbf{c}}^{-1}([\psi_{\mathbf{c}}((X_{i}), \psi_{\mathbf{c}}((Y_{i}))]).
	\]
	Then  the vector bundle $\theta \colon \mathfrak{k}^{\oplus l}\times \mathbb{D}(H)\rightarrow \mathbb{D}(H)$
	with the Lie bracket and the zero map as the trivial anchor map 
	becomes a Lie algebroid which we denote by $\mathfrak{k}_{\mathbb{D}(H)}$.
	The direct sum decomposition $\mathfrak{k}(\widehat{\mathbb{C}[z]}(\mathbf{c})_{l-1})
	=\mathfrak{k}\oplus \mathfrak{k}(\widehat{\mathbb{C}[z]}(\mathbf{c})_{l-1})^{1}$
	as Lie algebras induces the decomposition 
	$\mathfrak{k}_{\mathbb{D}(H)}=(\mathfrak{k}_{\mathbb{D}(H)})_{0}\oplus (\mathfrak{k}_{\mathbb{D}(H)})_{1}$
	as vector bundles.
	Here $(\mathfrak{k}_{\mathbb{D}(H)})_{0}$ and $(\mathfrak{k}_{\mathbb{D}(H)})_{1}$
	are trivial bundles 
	$\mathfrak{k}\times \mathbb{D}(H)\rightarrow \mathbb{D}(H)$
	and $\mathfrak{k}^{\oplus (l-1)}\times \mathbb{D}(H)\rightarrow \mathbb{D}(H)$
	with the Lie algebroid structures induced 
	form $\mathfrak{k}$ and $\mathfrak{k}(\widehat{\mathbb{C}[z]}(\mathbf{c})_{l-1})^{1}$.
	
	In particular, we consider the cases $\mathfrak{k}=\mathfrak{n}_{\mathbf{m}_{l}^{l+1}}^{-}$ and $\mathfrak{p}_{\mathbf{m}_{l}^{l+1}}^{-}$.
	In these cases, we can define the adjoint action of the Lie groupoid $(N_{\mathbf{m}_{l}^{l+1}}^{-})_{\mathbb{D}(H)}$ as follows.
	Let us take 
	holomorphic sections $n$ and $X$ of $(N_{\mathbf{m}_{l}^{l+1}}^{-})_{\mathbb{D}(H)}$ and $\mathfrak{k}_{\mathbb{D}(H)}$
	respectively. Then 
	we define the holomorphic section of $\mathfrak{k}_{\mathbb{D}(H)}$ by 
	\[
		\mathrm{Ad}_{\mathbb{D}(H)}(n)(X)(\mathbf{c}):=\psi_{\mathbf{c}}^{-1}\circ 
		\mathrm{Ad}_{N_{\mathbf{m}_{l}^{l+1}}^{-}(\widehat{\mathbb{C}[z]}(\mathbf{c})_{l-1})}(n(\mathbf{c}))(\psi_{\mathbf{c}}(X(\mathbf{c})))\quad (\mathbf{c}\in \mathbb{D}(H)).
	\]
	As a dual of this adjoint action, we can define  
	the coadjoint action of $(N_{\mathbf{m}_{l}^{l+1}}^{-})_{\mathbb{D}(H)}$
	on the dual bundle $\mathfrak{k}_{\mathbb{D}(H)}^{*}$.
	Namely, 
	for holomorphic sections $n$, $X$, and $\xi$ of $(N_{\mathbf{m}_{l}^{l+1}}^{-})_{\mathbb{D}(H)}$, $\mathfrak{k}_{\mathbb{D}(H)}$,
	and $\mathfrak{k}_{\mathbb{D}(H)}^{*}$, we define the 
	section of $\mathfrak{k}_{\mathbb{D}(H)}^{*}$ by
	\[
		(\mathrm{Ad}_{\mathbb{D}(H)}^{*}(n)(\xi))(X)(\mathbf{c}):=\xi(\mathbf{c})(\mathrm{Ad}_{N_{\mathbf{m}_{l}^{l+1}}^{-}(\widehat{\mathbb{C}[z]}(\mathbf{c})_{l-1})}(n(\mathbf{c})^{-1})(X(\mathbf{c})))
		\quad (\mathbf{c}\in \mathbb{D}(H)).
	\]
	We denote the subbundle $(\mathfrak{k}_{\mathbb{D}(H)})_{1}$
	of $\mathfrak{k}_{\mathbb{D}(H)}$ by 
	$(\mathfrak{n}^{-})^{(l)}_{\mathbb{D}(H)}$
	particularly when $\mathfrak{k}=\mathfrak{n}^{-}_{\mathbf{m}_{l}^{l+1}}$.
	
	The paring $(\ref{eq:paring})$ allows us to 
	identify the fiber of the dual bundle $\mathfrak{k}_{\mathbb{D}(H)}^{*}$
	at each $\mathbf{c}\in \mathbb{D}(H)$ with 
	$\mathfrak{n}^{+}_{\mathbf{m}_{l}^{l+1}}(\widehat{\varOmega}(\mathbf{c})_{l-1})$
	(resp. $\mathfrak{p}^{+}_{\mathbf{m}_{l}^{l+1}}(\widehat{\varOmega}(\mathbf{c})_{l-1})$)
	when $\mathfrak{k}=\mathfrak{n}^{-}_{\mathbf{m}_{l}^{l+1}}$
	(resp. $\mathfrak{k}=\mathfrak{p}^{-}_{\mathbf{m}_{l}^{l+1}}$).
	Thus we can regard $((\mathfrak{n}^{-})^{(l)}_{\mathbb{D}(H)})^{*}$
	as the subbundle of $(\mathfrak{p}^{-}_{\mathbf{m}_{l}^{l+1}})_{\mathbb{D}(H)}^{*}$
	by the inclusion  $\mathfrak{n}^{+}_{\mathbf{m}_{l}^{l+1}}(\widehat{\varOmega}(\mathbf{c})_{l-1})^{1}
	\subset \mathfrak{p}^{+}_{\mathbf{m}_{l}^{l+1}}(\widehat{\varOmega}(\mathbf{c})_{l-1})$
	at each fiber.
	
	\subsubsection{Symplectic foliation as deformation of $\mathbb{O}_{H_{\mathrm{irr}}}$}
	In Theorem \ref{thm:stepwise}, we saw that $\mathbb{O}_{H_{\mathrm{irr}}}$ is isomorphic to 
the product of $T^{*}(N^{-})^{(l)}$.
Thus deforming $T^{*}(N^{-})^{(l)}$, we can obtain a deformation of  $\mathbb{O}_{H_{\mathrm{irr}}}$ as follows.

Let us consider a relative cotangent bundle
\[
	\theta_{(N^{-})^{(l)}_{\mathbb{D}(H)}/\mathbb{D}(H)}
\colon T^{*}\left((N^{-})^{(l)}_{\mathbb{D}(H)}\right)/\mathbb{D}(H)
\rightarrow (N^{-})^{(l)}_{\mathbb{D}(H)},
\]
i.e., the vector bundle corresponding to 
the quotient sheaf $\varOmega_{(N^{-})^{(l)}_{\mathbb{D}(H)}}/
\theta^{*}\varOmega_{\mathbb{D}(H)}$.
Here $\theta\colon (N^{-})^{(l)}_{\mathbb{D}(H)}\rightarrow \mathbb{D}(H)$
is the Lie groupoid defined above and $\varOmega_{M}$ denotes the sheaf of 
holomorphic differential forms on a complex manifold $M$.
Set $\mathfrak{n}^{(l)}:=\left(\mathfrak{n}_{\mathbf{m}_{l}^{l+1}}\right)^{\oplus(l-1)}$
for simplicity.
Recall that if we forget the structure of Lie groupoid from $(N^{-})^{(l)}_{\mathbb{D}(H)}$,
this is the trivial bundle $\theta_{l}\colon \mathfrak{n}^{(l)}\times \mathbb{D}(H)\rightarrow \mathbb{D}(H)$.
Thus $T^{*}\left((N^{-})^{(l)}_{\mathbb{D}(H)}\right)/\mathbb{D}(H)$ is isomorphic to 
\[
	\theta_{\mathfrak{n}^{(l)}}\times \mathrm{id}_{\mathbb{D}(H)}\colon (T^{*}\mathfrak{n}^{(l)})
	\times \mathbb{D}(H)\rightarrow \mathfrak{n}^{(l)}\times \mathbb{D}(H),
\]
where $\theta_{\mathfrak{n}^{(l)}}\colon T^{*}\mathfrak{n}^{(l)}\rightarrow \mathfrak{n}^{(l)}$
is the standard projection of the cotangent bundle.
Here $T^{*}\mathfrak{n}^{(l)}$ has the Poisson structure 
induced from the standard symplectic form $\Omega_{T^{*}\mathfrak{n}^{(l)}}$
and also $\mathbb{D}(H)$ can be seen as the Poisson manifold with the trivial Poisson bracket.
Thus $T^{*}\left((N^{-})^{(l)}_{\mathbb{D}(H)}\right)/\mathbb{D}(H)$
becomes a Poisson manifold as
the product of these Poisson manifolds.
\begin{dfn}[Deformation of $T^{*}(N^{-})^{(l)}$]
	Let us set 
	\[
		\left(T^{*}(N^{-})^{(l)}\right)_{\mathbb{D}(H)}:=T^{*}\left((N^{-})^{(l)}_{\mathbb{D}(H)}\right)/\mathbb{D}(H)	
	\]
	and 
	call this Poisson manifold the {\em deformation of }$T^{*}(N^{-})^{(l)}$
	with respect to the HTL normal form $H$.
\end{dfn}
Let us define the projection map $\theta_{l}^{*}\colon \left(T^{*}(N^{-})^{(l)}\right)_{\mathbb{D}(H)}
\rightarrow \mathbb{D}(H)$ by setting $\theta_{l}^{*}:=\theta_{l}\circ \theta_{(N^{-})^{(l)}_{\mathbb{D}(H)}/\mathbb{D}(H)}$.
Then we note that $\theta_{l}^{*}\colon \left(T^{*}(N^{-})^{(l)}\right)_{\mathbb{D}(H)}
\rightarrow \mathbb{D}(H)$ is still a holomorphic vector bundle regarding $T^{*}\mathfrak{n}^{(l)}$
as a $\mathbb{C}$-vector space through the isomorphism $T^{*}\mathfrak{n}^{(l)}\cong \mathfrak{n}^{(l)}\oplus(\mathfrak{n}^{(l)})^{*}$.
For each fiber at $\mathbf{c}\in \mathbb{D}(H)$, there exists the natural isomorphism
\[
	(\theta_{l}^{*})^{-1}(\mathbf{c})\cong 
T^{*}N_{\mathbf{m}_{l}^{l+1}}^{-}(\widehat{\mathbb{C}[z]}(\mathbf{c})_{l-1})^{1}.
\]
Since in particular we have $(\theta_{l}^{*})^{-1}(0)\cong 
T^{*}N_{\mathbf{m}_{l}^{l+1}}^{-}(\mathbb{C}[z](0)_{l-1})^{1}=T^{*}(N^{-})^{(l)}$,
we can regard $\left(T^{*}(N^{-})^{(l)}\right)_{\mathbb{D}(H)}$ as a deformation of $T^{*}(N^{-})^{(l)}$.
It directly follows from the definition that 
the family $((\theta_{l}^{*})^{-1}(\mathbf{c}))_{\mathbf{c}\in \mathbb{D}(H)}$
is the symplectic foliation of $\left(T^{*}(N^{-})^{(l)}\right)_{\mathbb{D}(H)}$.

Since $N_{\mathbf{m}_{l}^{l+1}}^{-}(\widehat{\mathbb{C}[z]}(\mathbf{c})_{l-1})^{1}$
is normalized by $L_{\mathbf{m}_{1}}=\mathrm{GL}_{n}(\mathbb{C})_{H_{\mathrm{irr}}}$, the cotangent bundle 
$T^{*}N_{\mathbf{m}_{l}^{l+1}}^{-}(\widehat{\mathbb{C}[z]}(\mathbf{c})_{l-1})^{1}$
has the natural $L_{\mathbf{m}_{1}}$-action.
Hence $T^{*}(N^{-})^{(l)}_{\mathbb{D}(H)}$ is equipped with 
the $L_{\mathbf{m}_{1}}$-action.

Then as the direct sum of vector bundles $\theta_{l}^{*}\colon \left(T^{*}(N^{-})^{(l)}\right)_{\mathbb{D}(H)}
\rightarrow \mathbb{D}(H)$ we define a deformation of $\mathbb{O}_{H_{\mathrm{irr}}}$ as follows.
\begin{dfn}[Deformation of $\mathbb{O}_{H_{\mathrm{irr}}}$]
We consider the direct sum of the vector bundles 
\[
	\theta_{\mathbb{O}_{H_{\mathrm{irr}}}}\colon \bigoplus_{l=2}^{k}\left(T^{*}(N^{-})^{(l)}\right)_{\mathbb{D}(H)}\rightarrow \mathbb{D}(H)	
\]
and we denote the total space of this bundle 
by $(\mathbb{O}_{H_{\mathrm{irr}}})_{\mathbb{D}(H)}$,
which is called the {\em deformation of }$\mathbb{O}_{H_{\mathrm{irr}}}$.
This complex manifold
$(\mathbb{O}_{H_{\mathrm{irr}}})_{\mathbb{D}(H)}$
is naturally equipped with the Poisson structure and $L_{\mathbf{m}_{1}}$-action as well as $\left(T^{*}(N^{-})^{(l)}\right)_{\mathbb{D}(H)}$.
\end{dfn}

\begin{prp}
	The family $(\theta_{\mathbb{O}_{H_{\mathrm{irr}}}}^{-1}(\mathbf{c}))_{\mathbf{c}\in \mathbb{D}(H)}$
	is the symplectic foliation of $(\mathbb{O}_{H_{\mathrm{irr}}})_{\mathbb{D}(H)}$.
	In particular, the special fiber at $\mathbf{c}=0$, we have the symplectic isomorphism
	\[
		\theta_{\mathbb{O}_{H_{\mathrm{irr}}}}^{-1}(0)\cong \mathbb{O}_{H_{\mathrm{irr}}}.
	\]
\end{prp}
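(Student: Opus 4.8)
The plan is to reduce the statement to the individual factors $\left(T^{*}(N^{-})^{(l)}\right)_{\mathbb{D}(H)}$, for which the corresponding assertion has already been recorded just before the proposition, and then to observe that forming the Whitney sum over the trivial-Poisson base $\mathbb{D}(H)$ preserves the property ``symplectic foliation $=$ fibers over $\mathbb{D}(H)$.''

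First I would fix the trivializations simultaneously. Recall that each $\left(T^{*}(N^{-})^{(l)}\right)_{\mathbb{D}(H)}$ is, after forgetting the groupoid structure, the trivial bundle $(T^{*}\mathfrak{n}^{(l)})\times \mathbb{D}(H)\to \mathbb{D}(H)$, carrying the product Poisson structure of $\Omega_{T^{*}\mathfrak{n}^{(l)}}$ with the trivial bracket on $\mathbb{D}(H)$, and that its symplectic foliation is the family of fibers $(\theta_{l}^{*})^{-1}(\mathbf{c})$. In particular $\theta_{l}^{*}$ is a Poisson map onto $(\mathbb{D}(H),0)$, so the pullbacks of functions on $\mathbb{D}(H)$ are Casimirs. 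Taking these trivializations at once, the total space of $\theta_{\mathbb{O}_{H_{\mathrm{irr}}}}$ is identified with $\left(\prod_{l=2}^{k}T^{*}\mathfrak{n}^{(l)}\right)\times \mathbb{D}(H)$, which I would equip with the product Poisson structure, i.e.\ the direct product of the cotangent symplectic structures on the factors $T^{*}\mathfrak{n}^{(l)}$ and the trivial bracket on $\mathbb{D}(H)$. This is exactly the fiber-product Poisson structure on $\left(T^{*}(N^{-})^{(2)}\right)_{\mathbb{D}(H)}\times_{\mathbb{D}(H)}\cdots\times_{\mathbb{D}(H)}\left(T^{*}(N^{-})^{(k)}\right)_{\mathbb{D}(H)}$ determined by the individual structures together with the common Casimir directions coming from $\mathbb{D}(H)$.

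Next I would identify the symplectic leaves. For a product of Poisson manifolds the bivector is the sum of those of the factors, so its symplectic leaves are products of the leaves of the factors. Each $T^{*}\mathfrak{n}^{(l)}$ is symplectic, hence its unique leaf is the whole space, while $\mathbb{D}(H)$ with the trivial bracket has points as leaves; therefore the leaves of the product are precisely $\left(\prod_{l=2}^{k}T^{*}\mathfrak{n}^{(l)}\right)\times\{\mathbf{c}\}$, which under our identification is $\theta_{\mathbb{O}_{H_{\mathrm{irr}}}}^{-1}(\mathbf{c})=\bigoplus_{l=2}^{k}(\theta_{l}^{*})^{-1}(\mathbf{c})$. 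The point requiring care is that the symplectic form carried by a fiber genuinely varies with $\mathbf{c}$ through the groupoid structure, so I would verify that the $\mathbf{c}$-dependent fiberwise cotangent forms assemble into a single holomorphic Poisson bivector that is tangent to the fibers and nondegenerate on each of them. This follows because each summand already has this property by the discussion preceding the proposition, and the fiber-product construction over a trivially-Poisson base introduces no base components into the bivector.

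Finally, for the special fiber I would invoke the identifications already at hand: over $\mathbf{c}=\mathbf{0}$ we have $(\theta_{l}^{*})^{-1}(0)\cong T^{*}(N^{-})^{(l)}$ with its canonical cotangent symplectic structure, whence $\theta_{\mathbb{O}_{H_{\mathrm{irr}}}}^{-1}(0)\cong \prod_{l=2}^{k}T^{*}(N^{-})^{(l)}$ with the product symplectic structure. By Theorem \ref{thm:stepwise} the latter is symplectically isomorphic to $\mathbb{O}_{H_{\mathrm{irr}}}$, and since both forms in play are the product of the canonical cotangent forms, the isomorphism is symplectic, giving $\theta_{\mathbb{O}_{H_{\mathrm{irr}}}}^{-1}(0)\cong \mathbb{O}_{H_{\mathrm{irr}}}$. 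The only genuine work, and the main obstacle, is the gluing verification in the middle step: confirming that the $\mathbf{c}$-dependent fiberwise symplectic forms combine into a bona fide holomorphic Poisson structure on the total space with the base directions as Casimirs. Everything else is formal once the single-factor statement and Theorem \ref{thm:stepwise} are applied.
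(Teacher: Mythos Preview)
Your proposal is correct and follows essentially the same route as the paper: the paper's own proof simply says that the first assertion is obvious since each factor $\left(T^{*}(N^{-})^{(l)}\right)_{\mathbb{D}(H)}$ already has the fibers as its symplectic foliation, and that the second assertion follows from Theorem \ref{thm:stepwise}. You have merely unpacked these two sentences in more detail, including the product-of-leaves argument and the Casimir role of the base coordinates, which the paper leaves implicit.
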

\begin{proof}
	The first assertion is obvious since the family $((\theta_{l}^{*})^{-1}(\mathbf{c}))_{\mathbf{c}\in \mathbb{D}(H)}$
	is the symplectic foliation of $\left(T^{*}(N^{-})^{(l)}\right)_{\mathbb{D}(H)}$
	for each $l$.
	The second assertion follows from Theorem \ref{thm:stepwise}. 
\end{proof}
  
The right trivialization
\[
	\begin{array}{ccc}
		N_{\mathbf{m}_{l}^{l+1}}^{-}(\widehat{\mathbb{C}[z]}(\mathbf{c})_{l-1})^{1}\times 
		(\mathfrak{n}_{\mathbf{m}_{l}^{l+1}}^{-}(\widehat{\mathbb{C}[z]}(\mathbf{c})_{l-1})^{1})^{*}&
\longrightarrow& T^{*}N_{\mathbf{m}_{l}^{l+1}}^{-}(\widehat{\mathbb{C}[z]}(\mathbf{c})_{l-1})^{1}\\
(n,\xi)&\longmapsto &R^{*}_{n^{-1}}(\xi)
	\end{array}
\]
gives 
the isomorphism 
\begin{equation}\label{eq:rigthtrivbundle}
	(N^{-})^{(l)}_{\mathbb{D}(H)}\oplus 
	\left((\mathfrak{n}^{-})^{(l)}_{\mathbb{D}(H)}\right)^{*}
	\cong 
	\left(T^{*}(N^{-})^{(l)}\right)_{\mathbb{D}(H)}.
\end{equation}
as vector budless. Thus we have the isomorphism 
\begin{equation}\label{eq:deform}
	(\mathbb{O}_{H_{\mathrm{irr}}})_{\mathbb{D}(H)}\cong \bigoplus_{l=2}^{k}\left((N^{-})^{(l)}_{\mathbb{D}(H)}\oplus 
	\left((\mathfrak{n}^{-})^{(l)}_{\mathbb{D}(H)}\right)^{*}\right).	
\end{equation}

	\subsection{Local unfolding manifold}
	In the previous section, we defined a deformation of $\mathbb{O}_{H_{\mathrm{irr}}}$.
	Now we shall construct a deformation of the truncated orbit $\mathbb{O}_{H}$.

	\subsubsection{Unfolding manifold of a truncated orbit}
	As we saw in Theorem \ref{thm:boalch}, truncated orbit $\mathbb{O}_{H}$
	is obtained as a symplectic reduction of the extended orbit $T^{*}\mathrm{GL}_{n}(\mathbb{C})
	\times \mathbb{O}_{H_{\mathrm{irr}}}$. 
	Based on this fact, 
	we shall define a deformation of the truncated orbit 
	as a Poisson reduction of 
	$T^{*}\mathrm{GL}_{n}(\mathbb{C})\times (\mathbb{O}_{H_{\mathrm{irr}}})_{\mathbb{D}(H)}$.
	To this purpose, we first consider a deformation of the 
	map $\mathrm{res}_{\mathbb{O}_{H_{\mathrm{irr}}}}\colon 
	\mathbb{O}_{H_{\mathrm{irr}}}\rightarrow \mathfrak{gl}_{n}(\mathbb{C})_{H_{\mathrm{irr}}}^{*}$ as follows.
	Let $\iota_{l} \colon \left((\mathfrak{n}^{-})^{(l)}_{\mathbb{D}(H)}\right)^{*}
	\hookrightarrow (\mathfrak{p}_{\mathbf{m}_{l}^{l+1}}^{-})^{*}_{\mathbb{D}(H)}$
	be the inclusion as subbundle.
	Under the identification $(\ref{eq:deform})$, we define the map 
	$\mathrm{res}_{\mathfrak{l}_{\mathbf{m}_{1}}}\colon (\mathbb{O}_{H_{\mathrm{irr}}})_{\mathbb{D}(H)}\rightarrow \mathfrak{l}_{\mathbf{m}_{1}}^{*}$
	by setting 
	\[
		\mathrm{res}_{\mathfrak{l}_{\mathbf{m}_{1}}}(n_{l}(\mathbf{c}),X_{l}(\mathbf{c}))_{l=2,3,\ldots,k}:=
		\pi_{(\mathfrak{p}_{\mathbf{m}_{l}^{l+1}}^{-})^{*}\downarrow \mathfrak{l}_{\mathbf{m}_{1}}^{*}}\left(\sum_{l=2}^{k}-\mathrm{res}_{x=\infty}\left((\mathrm{Ad}^{*}(n_{l}(\mathbf{c}))(\iota_{l}(X_{l}(\mathbf{c}))))\right)\right).
	\]
	for $(n_{l}(\mathbf{c}),X_{l}(\mathbf{c}))_{l=2,3,\ldots,k}
	\in \bigoplus_{l=2}^{k}\left(N_{\mathbf{m}_{l}^{l+1}}^{-}(\mathbb{D}(H))^{1}\oplus 
	\left(\mathfrak{n}_{\mathbf{m}_{l}^{l+1}}^{-}(\mathbb{D}(H))^{1}\right)^{*}\right)$.
	In the right hand side, $\pi_{(\mathfrak{p}_{\mathbf{m}_{l}^{l+1}}^{-})^{*}\downarrow \mathfrak{l}_{\mathbf{m}_{1}}^{*}}\colon 
	(\mathfrak{p}_{\mathbf{m}_{l}^{l+1}}^{-})^{*}\rightarrow \mathfrak{l}_{\mathbf{m}_{1}}^{*}$
	denotes the natural projection.
	Then we obtain the following.
	\begin{prp}
		\begin{enumerate}
		\item 
		The map $\mathrm{res}_{\mathfrak{l}_{\mathbf{m}_{1}}}$ is 
		$L_{\mathbf{m}_{1}}=\mathrm{GL}_{n}(\mathbb{C})_{H_{\mathrm{irr}}}$-equivariant.
		\item The map $\mathrm{res}_{\mathfrak{l}_{\mathbf{m}_{1}}}$ is a deformation of $\mathrm{res}_{\mathbb{O}_{H_{\mathrm{irr}}}}$.
		Namely, under the identification $\theta_{\mathbb{O}_{H_{\mathrm{irr}}}}^{-1}(0)\cong \mathbb{O}_{H_{\mathrm{irr}}}$, we have 
		\[
			\mathrm{res}_{\mathfrak{l}_{\mathbf{m}_{1}}}|_{\theta_{\mathbb{O}_{H_{\mathrm{irr}}}}^{-1}(0)}=\mathrm{res}_{\mathbb{O}_{H_{\mathrm{irr}}}}.
		\]
		\end{enumerate}
	\end{prp}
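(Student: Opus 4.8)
The plan is to verify the two assertions separately: (1) by a term-by-term equivariance check, and (2) by specializing to the central fibre and matching with Proposition \ref{prop:stepwise}. For (1), recall that under the right trivialization (\ref{eq:rigthtrivbundle}) an element $g\in L_{\mathbf{m}_{1}}=\mathrm{GL}_{n}(\mathbb{C})_{H_{\mathrm{irr}}}$ acts on each summand by $(n_{l}(\mathbf{c}),X_{l}(\mathbf{c}))\mapsto (g\,n_{l}(\mathbf{c})\,g^{-1},\mathrm{Ad}^{*}(g)X_{l}(\mathbf{c}))$, since $L_{\mathbf{m}_{1}}$ normalizes $N_{\mathbf{m}_{l}^{l+1}}^{-}(\widehat{\mathbb{C}[z]}(\mathbf{c})_{l-1})^{1}$ and acts on it by conjugation. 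First I would observe that the inclusion $\iota_{l}$ is $L_{\mathbf{m}_{1}}$-equivariant, because $L_{\mathbf{m}_{1}}\subset L_{\mathbf{m}_{l}}$ preserves the decomposition $\mathfrak{p}_{\mathbf{m}_{l}^{l+1}}^{-}=\mathfrak{n}_{\mathbf{m}_{l}^{l+1}}^{-}\oplus \mathfrak{l}_{\mathbf{m}_{l}}$. Using the homomorphism property $\mathrm{Ad}^{*}(ab)=\mathrm{Ad}^{*}(a)\mathrm{Ad}^{*}(b)$ of the coadjoint action as defined in the text, one then has $\mathrm{Ad}^{*}(g\,n_{l}\,g^{-1})\circ \mathrm{Ad}^{*}(g)=\mathrm{Ad}^{*}(g\,n_{l})=\mathrm{Ad}^{*}(g)\circ \mathrm{Ad}^{*}(n_{l})$, so that $g$ factors the whole coadjoint term out of the sum as a single $\mathrm{Ad}^{*}(g)$.

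To finish (1), it remains to commute $\mathrm{Ad}^{*}(g)$ past the two concluding operations. Since every $g\in L_{\mathbf{m}_{1}}$ is a constant matrix, conjugation by $g$ commutes with taking residues, giving $\mathrm{res}_{x=\infty}\circ \mathrm{Ad}^{*}(g)=\mathrm{Ad}^{*}(g)\circ \mathrm{res}_{x=\infty}$, where on the right $\mathrm{Ad}^{*}(g)$ is now the ordinary coadjoint action of $\mathrm{GL}_{n}(\mathbb{C})$. Finally the projection $\pi_{(\mathfrak{p}_{\mathbf{m}_{l}^{l+1}}^{-})^{*}\downarrow \mathfrak{l}_{\mathbf{m}_{1}}^{*}}$ is $L_{\mathbf{m}_{1}}$-equivariant because $\mathrm{Ad}(g)$ preserves $\mathfrak{l}_{\mathbf{m}_{1}}$. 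Chaining these three facts yields $\mathrm{res}_{\mathfrak{l}_{\mathbf{m}_{1}}}(g\cdot v)=\mathrm{Ad}^{*}(g)\,\mathrm{res}_{\mathfrak{l}_{\mathbf{m}_{1}}}(v)$, the asserted equivariance.

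For (2), I would specialize at $\mathbf{c}=\mathbf{0}$, where $\widehat{\mathbb{C}[z]}(\mathbf{0})_{l-1}\cong \mathbb{C}[z]_{l-1}$, where $N_{\mathbf{m}_{l}^{l+1}}^{-}(\widehat{\mathbb{C}[z]}(\mathbf{0})_{l-1})^{1}=(N^{-})^{(l)}$, and where the identification $\theta_{\mathbb{O}_{H_{\mathrm{irr}}}}^{-1}(0)\cong \mathbb{O}_{H_{\mathrm{irr}}}$ is the one produced by Theorem \ref{thm:stepwise}. Under the right trivialization the pair $(n_{l},X_{l})$ corresponds to $\alpha_{l}\in T^{*}(N^{-})^{(l)}$ with $\theta(\alpha_{l})=n_{l}$ and right-invariant momentum $X_{l}$, so converting to the left-invariant momentum $L^{*}_{\theta(\alpha_{l})}(\alpha_{l})$ introduces exactly a factor $\mathrm{Ad}^{*}(n_{l})$. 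Since moreover $T^{*}(\pi_{(N^{-})^{(l)}}\circ \pi_{1})$ is precisely the equivariant inclusion $\iota_{l}$, and $\iota_{0,H_{\mathrm{irr}}}^{*}$ is identified with $\pi\circ(-\mathrm{res}_{x=\infty})$ through the trace-residue pairing, the $l$-th summand of the formula in Proposition \ref{prop:stepwise} reduces to $\iota_{0,H_{\mathrm{irr}}}^{*}\circ \mathrm{Ad}^{*}\!\big(\theta(\alpha)^{[2;l-1]}\big)\circ \mathrm{Ad}^{*}(n_{l})(\iota_{l}X_{l})$, whereas the $l$-th summand of $\mathrm{res}_{\mathfrak{l}_{\mathbf{m}_{1}}}$ is $\pi\circ(-\mathrm{res}_{x=\infty})\circ \mathrm{Ad}^{*}(n_{l})(\iota_{l}X_{l})$.

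The \emph{main obstacle} is therefore to show that the cumulative conjugating factor $\theta(\alpha)^{[2;l-1]}$, a product of the $n_{j}$ with $j<l$, becomes invisible once one applies $\iota_{0,H_{\mathrm{irr}}}^{*}$, i.e. that $\iota_{0,H_{\mathrm{irr}}}^{*}\circ \mathrm{Ad}^{*}(\theta(\alpha)^{[2;l-1]})=\iota_{0,H_{\mathrm{irr}}}^{*}$ on the elements $\mathrm{Ad}^{*}(n_{l})(\iota_{l}X_{l})$ in question. I expect this to follow from a filtration argument on the block structure: each extra factor $n_{j}\in N_{\mathbf{m}_{j}^{j+1}}^{-}(\mathbb{C}[z]_{j-1})^{1}$ equals $E_{n}$ plus lower-block terms of positive $z$-order, so that its contribution under conjugation lands either in the off-diagonal parts $\mathfrak{n}^{\pm}$ or in strictly higher-order poles, both of which are annihilated by the projection to the finest Levi $\mathfrak{l}_{\mathbf{m}_{1}}$ composed with the residue at infinity. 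Making this vanishing precise, by bookkeeping pole orders along the standard filtration and exploiting the inclusion relations (\ref{eq:inclusion}), is the single nontrivial computational step; once it is settled the two expressions agree summand by summand and (2) follows.
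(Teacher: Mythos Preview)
Your approach coincides with the paper's: the paper's own proof is literally the two sentences ``the first assertion follows from the definition and the second one from Proposition~\ref{prop:stepwise}'', and you are carrying out exactly those two deductions in detail. Your term-by-term equivariance check for (1) and your specialization-and-comparison for (2) are the content that the paper leaves implicit.

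The obstacle you isolate in (2) is genuine and is indeed the only nontrivial point hidden behind the paper's one-line appeal to Proposition~\ref{prop:stepwise}. Your proposed resolution is correct in mechanism, and can be sharpened as follows: for $j<l$ one has $n_{j}\in N_{\mathbf{m}_{j}^{j+1}}^{-}(\mathbb{C}[z]_{j-1})^{1}\subset L_{\mathbf{m}_{j+1}}(\mathbb{C}[z]_{k})^{1}\subset L_{\mathbf{m}_{l}}(\mathbb{C}[z]_{k})^{1}$ by the inclusions~(\ref{eq:inclusion}), so conjugation by $\theta(\alpha)^{[2;l-1]}$ preserves the $\mathfrak{n}_{\mathbf{m}_{l}}^{+}$-support of the dual element $\iota_{l}X_{l}$ (in the identification via the trace pairing). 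Whatever this conjugation contributes to the coefficient of $z^{-1}dz$ therefore still lies in $\mathfrak{n}_{\mathbf{m}_{l}}^{+}\subset\mathfrak{n}_{\mathbf{m}_{1}}^{+}$ and is annihilated by the projection to $\mathfrak{l}_{\mathbf{m}_{1}}^{*}$. This makes your filtration argument precise without further bookkeeping. One small caution: with the paper's convention $\mathrm{Ad}^{*}(g)(\xi)=\xi\circ\mathrm{Ad}(g^{-1})$, passing from the right trivialization $X_{l}=\nu(\alpha_{l})$ to $L^{*}_{\theta(\alpha_{l})}(\alpha_{l})$ produces $\mathrm{Ad}^{*}(n_{l}^{-1})$ rather than $\mathrm{Ad}^{*}(n_{l})$; you should track this inverse consistently when matching the two formulas, though it does not affect the argument's validity.
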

	\begin{proof}
		The first assertion follows from the definition and  
	second one from Proposition \ref{prop:stepwise}. 
	\end{proof}
	Let us consider the map 
	\[
		\begin{array}{cccc}	
			\mu_{\mathrm{ext},\,\mathbb{D}(H)}\colon &T^{*}\mathrm{GL}_{n}(\mathbb{C})\times (\mathbb{O}_{H_{\mathrm{irr}}})_{\mathbb{D}(H)}
			&\rightarrow& \mathfrak{gl}_{n}(\mathbb{C})_{H_{\mathrm{irr}}}^{*}\\
			&(\alpha,\xi)&\longmapsto &\iota_{H_{\mathrm{irr}}}^{*}\circ \mu_{\mathrm{GL}_{n}(\mathbb{C})}(\alpha)
			+\mathrm{res}_{\mathfrak{l}_{\mathbf{m}_{1}}}(\xi)
		\end{array}.
	\]
	Then we can see that this map is a $\mathrm{GL}_{n}(\mathbb{C})_{H_{\mathrm{irr}}}$-equivariant moment map.
	\begin{dfn}[Local unfolding manifold]
		The Poisson reduction
		\[
			(\mathbb{O}_{H})_{\mathbb{D}(H)}:=(\mathrm{GL}_{n}(\mathbb{C})_{H_{\mathrm{irr}}})_{H_{\mathrm{res}}}
			\backslash 	\mu_{\mathrm{ext},\,\mathbb{D}(H)}^{-1}(H_{\mathrm{res}})
		\]
		is called the {\em local unfolding manifold} of the truncated orbit $\mathbb{O}_{H}$.
		We have the projection map $\theta_{\mathbb{D}(H)}\colon (\mathbb{O}_{H})_{\mathbb{D}(H)}
		\rightarrow \mathbb{D}(H)$ induced from the projection $\theta_{\mathbb{O}_{H_{\mathrm{irr}}}}
		\colon (\mathbb{O}_{H_{\mathrm{irr}}})_{\mathbb{D}(H)}
		\rightarrow \mathbb{D}(H)$.
	\end{dfn}
	It is easy to see that the $\mathrm{GL}_{n}(\mathbb{C})_{H_{\mathrm{irr}}}$-action 
	on $T^{*}\mathrm{GL}_{n}(\mathbb{C})\times (\mathbb{O}_{H_{\mathrm{irr}}})_{\mathbb{D}(H)}$ is 
	proper. Moreover  
	a similar argument as on the symplectic reduction shows that 
	$H_{\mathrm{res}}$ is a regular value of the map $\mu_{\mathrm{ext},\,\mathbb{D}(H)}$.
	Thus the local unfolding manifold $(\mathbb{O}_{H})_{\mathbb{D}(H)}$ is truly a complex manifold.
	
	\subsubsection{Symplecitc foliation of $(\mathbb{O}_{H})_{\mathbb{D}(H)}$}
	Take a partition
	$\mathcal{I}\colon I_{0},I_{1},\ldots,I_{r} \in \mathcal{P}_{[k+1]}$ 
	and
	the corresponding embedding $\iota_{\mathcal{I}}\colon \mathbb{C}^{r+1}\hookrightarrow \mathbb{C}^{k+1}$
	as before.
	Then 
	we can consider the deformation $H_{\mathcal{I}}(\mathsf{c})=H(\iota_{\mathcal{I}}(\mathsf{c}))$ 
	of $H$ on 
	$C(\mathcal{I})=\{\iota_{\mathcal{I}}(\mathsf{c})\in \mathbb{C}^{k+1}\mid \mathsf{c}=(c_{0},c_{1},\ldots,c_{r})\in C_{r+1}(\mathbb{C})\}$.
	Then as we saw in Proposition \ref{prop:spectral type}, the partial fraction decomposition algorithm 
	gives a description of 
	$H_{\mathcal{I}}(\mathsf{c})$ as the sum of 
	HLT normal forms $H_{\mathcal{I}}(\mathsf{c})_{z_{c_{j}}}$ at $z=c_{j}$
	with spectral types 
	\[
		\begin{cases}
		(\mathbf{m}_{i_{[j,0]}}\le_{\phi_{i_{[j,0]}}^{i_{[j,1]}}}\mathbf{m}_{i_{[j,1]}}\le_{\phi_{i_{[j,1]}}^{i_{[j,2]}}}\cdots\le_{\phi_{i_{[j,k_{j}-1]}}^{i_{[j,k_{j}]}}} \mathbf{m}_{i_{[j,k_{j}]}},\mathrm{triv})&\text{ for }
	j=1,2,\ldots,r,\\
		(\mathbf{m}_{i_{[0,0]}}\le_{\phi_{i_{[0,0]}}^{i_{[0,1]}}}\mathbf{m}_{i_{[0,1]}}\le_{\phi_{i_{[0,1]}}^{i_{[0,2]}}}\cdots\le_{\phi_{i_{[0,k_{0}-1]}}^{i_{[0,k_{0}]}}} \mathbf{m}_{i_{[0,k_{0}]}},\sigma(\mathbf{m}_{0}))&\text{ for }
	j=0
		\end{cases}.
	\]
	We denote the coadjoint orbit of $\mathrm{GL}_{n}(\mathbb{C}[z_{c_{j}}]_{k_{j}})$
	through $H_{\mathcal{I}}(\mathsf{c})_{z_{c_{j}}}$ by $\mathbb{O}_{H_{\mathcal{I}}(\mathsf{c})_{z_{c_{j}}}}$.
	
	As we saw in Section \ref{sec:pfd}, the partial fraction decomposition of $\widehat{\varOmega}(\mathbf{c})_{l}$ 
	is compatible with the decomposition of $\widehat{\mathbb{C}[z]}(\mathbf{c})_{l}$.
	Thus this induces the partial fraction decomposition of the each fiber of $T^{*}N^{(l)}$
	through the trivialization $(\ref{eq:rigthtrivbundle})$, and we obtain a symplectic holomorphic map 
	\[
		\varphi_{\iota_{\mathcal{I}}(\mathsf{c})}
		\colon \theta_{\mathbb{D}(H)}^{-1}(\iota_{\mathcal{I}}(\mathsf{c}))
		\hookrightarrow \prod_{j=0}^{r}\mathbb{O}_{H_{\mathcal{I}}(\mathsf{c})_{z_{c_{j}}}}.
	\]
	Then we obtain the following.
	\begin{thm}[\cite{H}]\label{thm:fiber}
		\begin{enumerate}
			\item The holomorphic family $(\theta_{\mathbb{D}(H)}^{-1}(\mathbf{c}))_{\mathbf{c}\in 
			\mathbb{D}(H)}$ is the symplectic foliation of $(\mathbb{O}_{H})_{\mathbb{D}(H)}$.
			\item Let 
			$\mathcal{I}\colon I_{0},I_{1},\ldots,I_{r} \in \mathcal{P}_{[k+1]}$ 
			be a partition. Let us take  $\mathsf{c}=(c_{0},c_{1},\ldots,c_{r})\in C_{r+1}(\mathbb{C})$
			so that 
			$\iota_{\mathcal{I}}(\mathsf{c})\in \mathbb{D}(H)$.
			Then the partial fraction decomposition explained in Section \ref{sec:pfd}
			gives the holomorphic symplectic embedding  
			\[
				\varphi_{\iota_{\mathcal{I}}(\mathsf{c})}
				\colon \theta_{\mathbb{D}(H)}^{-1}(\iota_{\mathcal{I}}(\mathsf{c}))
				\hookrightarrow \prod_{j=0}^{r}\mathbb{O}_{H_{\mathcal{I}}(\mathsf{c})_{z_{c_{j}}}}
			\]
			whose image is a Zariski open dense subset.
		\end{enumerate}
	\end{thm}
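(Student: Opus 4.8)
The plan is to establish both assertions by transporting the symplectic-reduction description of $\mathbb{O}_{H}$ through the partial fraction decomposition, treating the deformation parameter $\mathbf{c}$ as a Casimir throughout. For assertion (1), I would first observe that the product $T^{*}\mathrm{GL}_{n}(\mathbb{C})\times(\mathbb{O}_{H_{\mathrm{irr}}})_{\mathbb{D}(H)}$ is a holomorphic Poisson manifold whose symplectic leaves are exactly the submanifolds $T^{*}\mathrm{GL}_{n}(\mathbb{C})\times\theta_{\mathbb{O}_{H_{\mathrm{irr}}}}^{-1}(\mathbf{c})$, since the factor $T^{*}\mathrm{GL}_{n}(\mathbb{C})$ is symplectic and the leaves of $(\mathbb{O}_{H_{\mathrm{irr}}})_{\mathbb{D}(H)}$ are the fibers $\theta_{\mathbb{O}_{H_{\mathrm{irr}}}}^{-1}(\mathbf{c})$ by the previous proposition; in particular the pullback of each coordinate of $\mathbf{c}$ is a Casimir. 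Because the reduction group $(\mathrm{GL}_{n}(\mathbb{C})_{H_{\mathrm{irr}}})_{H_{\mathrm{res}}}$ acts fiberwise over $\mathbb{D}(H)$ and $\mu_{\mathrm{ext},\mathbb{D}(H)}$ preserves each leaf, the Poisson reduction commutes with restriction to fibers. Concretely, $\theta_{\mathbb{D}(H)}^{-1}(\mathbf{c})$ is the symplectic reduction of the symplectic leaf by $(\mathrm{GL}_{n}(\mathbb{C})_{H_{\mathrm{irr}}})_{H_{\mathrm{res}}}$, hence is itself symplectic; since these fibers exhaust $(\mathbb{O}_{H})_{\mathbb{D}(H)}$ and $\mathbf{c}$ descends to a Casimir along $\theta_{\mathbb{D}(H)}$, they are precisely the symplectic leaves.

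For assertion (2), fix the stratum $C(\mathcal{I})$ and a point $\iota_{\mathcal{I}}(\mathsf{c})\in\mathbb{D}(H)$, and write $\mathbb{O}_{\mathrm{irr},j}$ for the local irregular orbit $\mathbb{O}_{(H_{\mathcal{I}}(\mathsf{c})_{z_{c_{j}}})_{\mathrm{irr}}}$. The strategy is to factor the fiber of $(\mathbb{O}_{H_{\mathrm{irr}}})_{\mathbb{D}(H)}$ over $\iota_{\mathcal{I}}(\mathsf{c})$ into a product of these local orbits. By the right-trivialization isomorphism and Theorem \ref{thm:stepwise}, this fiber is $\prod_{l=2}^{k}T^{*}N_{\mathbf{m}_{l}^{l+1}}^{-}(\widehat{\mathbb{C}[z]}(\iota_{\mathcal{I}}(\mathsf{c}))_{l-1})^{1}$. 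The partial fraction decomposition of Section \ref{sec:pfd} splits each module $\widehat{\varOmega}(\iota_{\mathcal{I}}(\mathsf{c}))_{l-1}$ as a direct sum indexed by the points $c_{0},\ldots,c_{r}$, compatibly with the Chinese-remainder splitting of $\widehat{\mathbb{C}[z]}(\iota_{\mathcal{I}}(\mathsf{c}))_{l-1}$. I would use this to decompose each factor $N_{\mathbf{m}_{l}^{l+1}}^{-}(\widehat{\mathbb{C}[z]}(\iota_{\mathcal{I}}(\mathsf{c}))_{l-1})^{1}$ as a product of local unipotent groups and then regroup the double product $\prod_{l}\prod_{j}$ as $\prod_{j}\prod_{l}$. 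By Proposition \ref{prop:spectral type} the $j$-th block is exactly the stepwise factorization of $\mathbb{O}_{\mathrm{irr},j}$; since Theorem \ref{thm:stepwise} realizes an orbit only as the image of its LU big cell, each block is a Zariski open dense subset of that local orbit.

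It remains to carry this decomposition through the symplectic reduction defining $(\mathbb{O}_{H})_{\mathbb{D}(H)}$. The map $\varphi_{\iota_{\mathcal{I}}(\mathsf{c})}$ is the principal-part map: it sends the class of a deformed orbit element to the tuple of its principal parts at $c_{0},\ldots,c_{r}$, each lying in the corresponding local truncated orbit $\mathbb{O}_{H_{\mathcal{I}}(\mathsf{c})_{z_{c_{j}}}}$ by Proposition \ref{prop:spectral type}. To see it is symplectic, I would combine Theorem \ref{thm:boalch} applied at each $c_{j}$ with the commutative diagram at the end of Section \ref{sec:pfd}: the former writes each $\mathbb{O}_{H_{\mathcal{I}}(\mathsf{c})_{z_{c_{j}}}}$ as a reduction of $T^{*}\mathrm{GL}_{n}(\mathbb{C})\times\mathbb{O}_{\mathrm{irr},j}$, and the latter guarantees that the trace pairing, and hence the canonical one-form and the symplectic form, is respected by the Chinese-remainder splitting. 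That the image is Zariski open dense follows from the big-cell statement established above.

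The hard part will be matching the moment-map data on the two sides. On the source a single residue reduction by $(\mathrm{GL}_{n}(\mathbb{C})_{H_{\mathrm{irr}}})_{H_{\mathrm{res}}}$ is performed, with $H_{\mathrm{res}}$, including the nilpotent part $N_{0}$, attached to the point $c_{0}$; on the target each local orbit carries its own residue datum and its own Boalch reduction. I would need to verify, using the explicit formula of Proposition \ref{prop:stepwise} for $\mathrm{res}_{\mathbb{O}_{H_{\mathrm{irr}}}}$ together with its deformation $\mathrm{res}_{\mathfrak{l}_{\mathbf{m}_{1}}}$, that the global residue map decomposes under partial fractions into the tuple of local residue maps, so that the single reduction reproduces the combined local reductions on a Zariski open dense set. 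This compatibility of moment maps under the Chinese-remainder splitting, rather than the combinatorics of the strata, is where the genuine content of the theorem lies.
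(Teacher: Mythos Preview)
The paper does not actually prove this theorem: it is stated with a bare citation to the forthcoming paper \cite{H}, and the only hint of the argument is the sentence immediately preceding it, which says that the partial fraction decomposition of Section~\ref{sec:pfd}, applied fiberwise through the right trivialization~\eqref{eq:rigthtrivbundle}, produces the symplectic map $\varphi_{\iota_{\mathcal{I}}(\mathsf{c})}$. Your outline follows exactly that sketch, and your identification of the moment-map compatibility under Chinese remainder as the genuine content of part~(2) is on target; there is nothing further in the paper to compare against.

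One point in your argument is mislocated, however. You attribute the Zariski-open-dense image to the claim that ``Theorem~\ref{thm:stepwise} realizes an orbit only as the image of its LU big cell.'' That is not so: Theorem~\ref{thm:stepwise} is a genuine symplectic \emph{isomorphism} $\prod_{l=2}^{k}T^{*}(N^{-})^{(l)}\cong \mathbb{O}_{H_{\mathrm{irr}}}$, not an open embedding, so the openness cannot originate there. A more plausible source is the mismatch that appears when you carry out the Chinese-remainder splitting of $N_{\mathbf{m}_{l}^{l+1}}^{-}(\widehat{\mathbb{C}[z]}(\iota_{\mathcal{I}}(\mathsf{c}))_{l-1})^{1}$: the superscript-$1$ condition is the single constraint ``evaluate to $E_{n}$ at $z=c_{0}$'', which bears only on the $c_{0}$-component; at each $c_{j}$ with $j\ge 1$ you land in the full group $N_{\mathbf{m}_{l}^{l+1}}^{-}(\mathbb{C}[z_{c_{j}}]_{k_{j}^{(l-1)}})$, with no normalization at $z=c_{j}$. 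Moreover the block pattern $\mathbf{m}_{l}^{l+1}$ coming from the global filtration need not agree with the pattern $\mathbf{m}_{i_{[j,\nu]}}^{i_{[j,\nu+1]}}$ dictated by the local spectral type at $c_{j}$ in Proposition~\ref{prop:spectral type}. Converting from one presentation to the other requires an honest LU-type factorization inside the Levi $L_{\mathbf{m}_{l+1}}$, and that is only available where the relevant principal minors are invertible, i.e.\ on a Zariski-open set. Your big-cell intuition is correct, but it should be placed at this regrouping step rather than at Theorem~\ref{thm:stepwise}.
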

	This shows that 
	the symplectic foliation of the unfolding manifold $(\mathbb{O}_{H})_{\mathbb{D}(H)}$
	gives a deformation of the truncated orbit $\mathbb{O}_{H}$
	along with the deformation $(H(\mathbf{c}))_{\mathbf{c}\in \mathbb{D}(H)}$
	of the HTL normal form $H$
	introduced in Definition \ref{def:unfold}.
	
	Let us introduce a deformation of the residue map $\mu_{\mathbb{O}_{H}}^{0}\colon 
	\mathbb{O}_{H}\rightarrow \mathfrak{gl}_{n}(\mathbb{C})^{*}$ of the truncated orbit 
	defined in Section \ref{sec:red} as follows.
	Consider the map  
	\[
		\nu_{\mathrm{GL}_{n}(\mathbb{C})}\circ \mathrm{pr}_{1}\colon T^{*}\mathrm{GL}_{n}(\mathbb{C})\times (\mathbb{O}_{H_{\mathrm{irr}}})_{\mathbb{D}(H)}\rightarrow 
		\mathfrak{gl}_{n}(\mathbb{C})^{*}.
	\]
	Then as well as in Section \ref{sec:red}, we can see that 
	its restriction 
	$\nu_{\mathrm{GL}_{n}(\mathbb{C})}\circ \mathrm{pr}_{1}|_{\mu_{\mathrm{ext},\mathbb{D}(H)}^{-1}(H_{\mathrm{res}})}$
	on the subspace $\mu_{\mathrm{ext},\mathbb{D}(H)}^{-1}(H_{\mathrm{res}})$
	is $(\mathrm{GL}_{n}(\mathbb{C})_{H_{\mathrm{irr}}})_{H_{\mathrm{res}}}$-invariant.
	Thus there uniquely exists the map 
	$\nu_{(\mathbb{O}_{H})_{\mathbb{D}(H)}}\colon (\mathbb{O}_{H})_{\mathbb{D}(H)}\rightarrow \mathfrak{gl}_{n}(\mathbb{C})^{*}$ such that 
	the diagram 
	\[
		\begin{tikzcd}
			\mu_{\mathrm{ext},\mathbb{D}(H)}^{-1}(H_{\mathrm{res}}) \arrow[r,"\nu_{\mathrm{GL}_{n}(\mathbb{C})}\circ \mathrm{pr}_{1}"] \arrow[d]&
			[2 em]\mathfrak{gl}_{n}(\mathbb{C})^{*}\\
			(\mathbb{O}_{H})_{\mathbb{D}(H)}\arrow[ur, "\nu_{(\mathbb{O}_{H})_{\mathbb{D}(H)}}"']&
		\end{tikzcd}
	\]
	is commutative.
	Then Proposition \ref{prop:red} assures that the map 
	\[
		\mu_{(\mathbb{O}_{H})_{\mathbb{D}(H)}}^{0}:=-\nu_{(\mathbb{O}_{H})_{\mathbb{D}(H)}}\colon (\mathbb{O}_{H})_{\mathbb{D}(H)}\rightarrow \mathfrak{gl}_{n}(\mathbb{C})^{*}
	\]
	is a deformation of the map $\mu_{\mathbb{O}_{H}}^{0}\colon \mathbb{O}_{H}\rightarrow \mathfrak{gl}_{n}(\mathbb{C})^{*}$, i.e., we have the equation
	\[
		\mu_{(\mathbb{O}_{H})_{\mathbb{D}(H)}}^{0}|_{\theta_{\mathbb{D}(H)}^{-1}(0)}=\mu_{\mathbb{O}_{H}}^{0}
	\]
	under the isomorphism $\theta_{\mathbb{D}(H)}^{-1}(0)\cong \mathbb{O}_{H}$.
	
	\subsection{Unfolding manifold of meromorphic connections}
	Now we are ready to define the unfolding manifold which is a Poisson manifold 
	descending the unfolding of the unramified irregular singularities of 
	connections in the moduli space $\mathcal{M}_{s}^{*}(\mathbf{H})$. 
	
	\subsubsection{Stratification of the product of local unfolding manifolds}
	Let $\mathbf{H}=(H_{a})_{a\in |D|}$
	be a collection of unramified HTL normal forms,
	\[
	H_{a}=\left(\frac{S_{k_{a}}^{(a)}}{z_{a}^{k_{a}-1}}+\cdots+\frac{S^{(a)}_{1}}{z_{a}}+S^{(a)}_{0}+N^{(a)}_{0}\right)
	\frac{dz_{a}}{z_{a}}
	\]
	with the spectral types $\mathrm{sp}(H_{a}):=(\mathbf{m}_{0}^{[a]}\le_{\phi_{0}^{[a]}}\le  \mathbf{m}^{[a]}_{1}\le_{\phi_{1}^{[a]}} \cdots \le_{\phi_{k-1}^{[a]}}\mathbf{m}_{k_{a}}^{[a]}, \sigma(\mathbf{m}_{0}^{[a]}))$.
	We consider the moduli space 
	$\mathcal{M}_{s}^{*}(\mathbf{H})=\mathrm{GL}_{n}(\mathbb{C})\backslash (\mu_{\mathbf{H}}^{s})^{-1}(0)$
	of meromorphic connections with respect to $\mathbf{H}$.
	Then we define a deformation of $\mathcal{M}_{s}^{*}(\mathbf{H})$ as follows.
	Let us consider the product of the local unfolding manifolds
	\[
		\prod_{a\in |D|}(\mathbb{O}_{H_{a}})_{\mathbb{D}(H_{a})}
	\]
	with the natural projection $\bar{\theta}_{\prod_{a\in |D|}\mathbb{D}(H_{a})}\colon \prod_{a\in |D|}(\mathbb{O}_{H_{a}})_{\mathbb{D}(H_{a})}\rightarrow\prod_{a\in |D|}\mathbb{D}(H_{a})$.
	
	As well as in Section \ref{sec:strata}, we can consider a stratification 
	\[
		\prod_{a\in |D|}\mathbb{C}^{k_{a}+1}=\bigsqcup_{(\mathcal{I}_{a})_{a
	\in |D|}\in \prod_{a\in |D|}\mathcal{P}_{k_{a}+1}}\left(\prod_{a\in |D|}C(\mathcal{I}_{a})	\right).
	\]
	of $\prod_{a\in |D|}\mathbb{C}^{k_{a}+1}$ associated to collections of partitions $(\mathcal{I}_{a})_{a
	\in |D|}\in \prod_{a\in |D|}\mathcal{P}_{k_{a}+1}$.
	This induces the following stratification of $\prod_{a\in |D|}(\mathbb{O}_{H_{a}})_{\mathbb{D}(H_{a})}$,
	\[
		\prod_{a\in |D|}(\mathbb{O}_{H_{a}})_{\mathbb{D}(H_{a})}=\bigsqcup_{(\mathcal{I}_{a})_{a
		\in |D|}\in \prod_{a\in |D|}\mathcal{P}_{k_{a}+1}}\prod_{a\in |D|}(\mathbb{O}_{H_{a}})_{\mathbb{D}(H_{a})}^{\mathcal{I}_{a}},
	\]
	where 
	\[
		(\mathbb{O}_{H_{a}})_{\mathbb{D}(H_{a})}^{\mathcal{I}_{a}}:=\bigcup_{\mathbf{c}_{a}\in \mathbb{D}(H)\cap C(\mathcal{I}_{a})}\theta_{\mathbb{D}(H_{a})}^{-1}(\mathbf{c}_{a}).
	\]
	Let us look at a stratum $\prod_{a\in |D|}(\mathbb{O}_{H_{a}})_{\mathbb{D}(H_{a})}^{\mathcal{I}_{a}}$. Then
	Theorem \ref{thm:fiber} says that 
	each fiber $\prod_{a\in |D|}\theta_{\mathbb{D}(H_{a})}^{-1}(\iota_{\mathcal{I}_{a}}(\mathsf{c}_{a}))$ at $(\mathsf{c}_{a})\in \prod_{a\in |D|}
	\mathbb{C}^{r_{\mathcal{I}_{a}}+1}$
	is isomorphic to a Zariski open subset of $\prod_{a\in |D|}\prod_{j=0}^{r_{\mathcal{I}_{a}}}\mathbb{O}_{H_{\mathcal{I}_{a}}(\mathsf{c}_{a})_{z_{c_{j}}}}$.
	\if0
	Then the spectral type of each $H_{\mathcal{I}_{a}}(\mathsf{c}_{a})_{z_{c_{j}}}$ is 
	\[
		(\mathbf{m}^{[a]}_{i^{[a]}_{[j,0]}}\le_{(\phi^{[a]})_{i^{[a]}_{[j,0]}}^{i^{[a]}_{[j,1]}}}\mathbf{m}^{[a]}_{i^{[a]}_{[j,1]}}\le_{(\phi^{[a]})_{i^{[a]}_{[j,1]}}^{i^{[a]}_{[j,2]}}}\cdots\le_{(\phi^{[a]})_{i^{[a]}_{[j,k_{a,j}-1]}}^{i^{[a]}_{[j,k_{a,j}]}}} \mathbf{m}^{[a]}_{i^{[a]}_{[j,k_{a,j}]}},\mathrm{triv})
	\]
	if  $j=1,2,\ldots,r_{\mathcal{I}_{a}}$, or 
	\[	
	(\mathbf{m}^{[a]}_{i^{[a]}_{[0,0]}}\le_{(\phi^{[a]})_{i^{[a]}_{[0,0]}}^{i^{[a]}_{[0,1]}}}\mathbf{m}^{[a]}_{i^{[a]}_{[0,1]}}\le_{(\phi^{[a]})_{i^{[a]}_{[0,1]}}^{i^{[a]}_{[0,2]}}}\cdots\le_{(\phi^{[a]})_{i^{[a]}_{[0,k_{a,0}-1]}}^{i^{[a]}_{[0,k_{a,0}]}}} \mathbf{m}^{[a]}_{i^{[a]}_{[0,k_{a,0}]}},\sigma(\mathbf{m}^{[a]}_{0}))
	\]
	if $j=0$.
	and write $\mathcal{I}_{a}=(I^{[a]}_{0},I^{[a]}_{1},\ldots,I^{[a]}_{r_{\mathcal{I}_{a}}})\in \mathcal{P}_{k_{a}+1}$, 
	$I^{[a]}_{j}=\{i^{[a]}_{[j,0]},i^{[a]}_{[j,1]},\ldots,i^{[a]}_{[j,k_{a,j}]}\}$.
	Here we note that these spectral types do not depend on the parameters $(\mathsf{c}_{a})_{a\in |D|}$ but only on the collection of partitions $(\mathcal{I}_{a})_{a\in |D|}$.
	Thus we just write 
	\[
		\mathrm{sp}(\mathbf{H})_{(\mathcal{I}_{a})_{a\in |D|}}:=(\mathrm{sp}(H_{\mathcal{I}_{a}}(\mathsf{c}_{a})_{z_{c_{j}}}))_{a\in |D|}.
	\]
	\fi
	
	By these isomorphisms, 
	we can define an open subspace 
	$\left(\prod_{a\in |D|}(\mathbb{O}_{H_{a}})_{\mathbb{D}(H_{a})}^{\mathcal{I}_{a}}\right)^{s}$ of the stratum
	as the union of the inverse images of $\left(\prod_{a\in |D|}\prod_{j=0}^{r_{\mathcal{I}_{a}}}\mathbb{O}_{H_{\mathcal{I}_{a}}(\mathsf{c}_{a})_{z_{c_{j}}}}\right)^{s}$.
	Further we define 
	\[
		\left(\prod_{a\in |D|}(\mathbb{O}_{H_{a}})_{\mathbb{D}(H_{a})}\right)^{s}:=\bigsqcup_{(\mathcal{I}_{a})_{a
		\in |D|}\in \prod_{a\in |D|}\mathcal{P}_{k_{a}+1}}\left(\prod_{a\in |D|}(\mathbb{O}_{H_{a}})_{\mathbb{D}(H_{a})}^{\mathcal{I}_{a}}\right)^{s}.
	\]
	
	\subsubsection{Unfolding manifold of meromorphic connections}
	Let us consider the $\mathrm{GL}_{n}(\mathbb{C})$-equivariant map 
	\[
		\begin{array}{cccc}
		\mu_{\mathbf{H},\prod_{a\in |D|}\mathbb{D}(H_{a})}^{s}\colon &
		\left(\prod_{a\in |D|}(\mathbb{O}_{H_{a}})_{\mathbb{D}(H_{a})}\right)^{s}
		&\longrightarrow &\mathfrak{gl}_{n}(\mathbb{C})^{*}\\
		&
		(X_{a})_{a\in |D|}&\longmapsto &
		\sum_{a\in |D|}\mu^{0}_{(\mathbb{O}_{H_{a}})_{\mathbb{D}(H_{a})}}(X_{a})
		\end{array}.
	\]
	\begin{dfn}[Unfolding manifold]
		The Poisson quotient space 
		\[
			\mathcal{M}_{s}^{*}(\mathbf{H})_{\prod_{a\in |D|}\mathbb{D}(H_{a})}:=
			\mathrm{GL}_{n}(\mathbb{C})\backslash 	(\mu_{\mathbf{H},\prod_{a\in |D|}\mathbb{D}(H_{a})}^{s})^{-1}(0)
		\] 
		is called the {\em unfolding manifold} of 
		meromorphic connections on the rank $n$ trivial bundle on $\mathbb{P}^{1}$
		with respect to the unramifed HTL normal forms $\mathbf{H}$.
	\end{dfn}
	Since 
	the projection map $\bar{\theta}_{\prod_{a\in |D|}\mathbb{D}(H_{a})}\colon \prod_{a\in |D|}(\mathbb{O}_{H_{a}})_{\mathbb{D}(H_{a})}\rightarrow\prod_{a\in |D|}\mathbb{D}(H_{a})$
	is  $\mathrm{GL}_{n}(\mathbb{C})$-invariant, 
	this map naturally induces the projection map 
	\[
		\theta_{\prod_{a\in |D|}\mathbb{D}(H_{a})}\colon \mathcal{M}_{s}^{*}(\mathbf{H})_{\prod_{a\in |D|}\mathbb{D}(H_{a})}\rightarrow\prod_{a\in |D|}\mathbb{D}(H_{a}).	
	\]
	Then we can show that the closed submanifold $	(\mu_{\mathbf{H},\prod_{a\in |D|}\mathbb{D}(H_{a})}^{s})^{-1}(0)$
	of the Poisson manifold $\left(\prod_{a\in |D|}(\mathbb{O}_{H_{a}})_{\mathbb{D}(H_{a})}\right)^{s}$
	has a clean intersection with both the symplectic leaves $\bar{\theta}_{\prod_{a\in |D|}\mathbb{D}(H_{a})}^{-1}(\mathbf{c})$
	of $\left(\prod_{a\in |D|}(\mathbb{O}_{H_{a}})_{\mathbb{D}(H_{a})}\right)^{s}$
	and the orbits of $\mathrm{GL}_{n}(\mathbb{C})$.
	Therefore we obtain 
	the following our first main theorem.
	\begin{thm}[\cite{H}]\label{thm:main1}
		Suppose that the moduli space $\mathcal{M}_{s}^{*}(\mathbf{H})$ is not 
		the empty set.
		Then the  unfolding manifold $\mathcal{M}_{s}^{*}(\mathbf{H})_{\prod_{a\in |D|}\mathbb{D}(H_{a})}$
		is a Poisson manifold with the symplectic foliation 
		$(\theta_{\prod_{a\in |D|}\mathbb{D}(H_{a})}^{-1}(\mathbf{c}))_{\mathbf{c}\in \prod_{a\in |D|}\mathbb{D}(H_{a})}.$
	\end{thm}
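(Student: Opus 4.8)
The plan is to realize $\mathcal{M}_{s}^{*}(\mathbf{H})_{\prod_{a\in |D|}\mathbb{D}(H_{a})}$ as a \emph{Poisson reduction} and to read off its symplectic foliation from the foliation of the space before reduction. First I would record that the ambient space $\left(\prod_{a\in |D|}(\mathbb{O}_{H_{a}})_{\mathbb{D}(H_{a})}\right)^{s}$ is a Poisson manifold: each factor $(\mathbb{O}_{H_{a}})_{\mathbb{D}(H_{a})}$ carries the Poisson structure built in the previous subsection, a product of Poisson manifolds is again Poisson, and passing to the open irreducible locus preserves this structure. By Theorem \ref{thm:fiber} the symplectic foliation of each factor is $(\theta_{\mathbb{D}(H_{a})}^{-1}(\mathbf{c}_{a}))_{\mathbf{c}_{a}}$, so the symplectic leaves of the product are exactly the fibres $\bar{\theta}_{\prod_{a\in |D|}\mathbb{D}(H_{a})}^{-1}(\mathbf{c})$ over $\mathbf{c}\in \prod_{a\in |D|}\mathbb{D}(H_{a})$. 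In particular the functions pulled back from the base $\prod_{a\in |D|}\mathbb{D}(H_{a})$ via $\bar{\theta}$ are Casimirs of the product Poisson structure.

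The second ingredient is that the diagonal $\mathrm{GL}_{n}(\mathbb{C})$-action is Hamiltonian, with equivariant moment map $\mu_{\mathbf{H},\prod_{a\in |D|}\mathbb{D}(H_{a})}^{s}$. The crucial structural point is that $\bar{\theta}_{\prod_{a\in |D|}\mathbb{D}(H_{a})}$ is $\mathrm{GL}_{n}(\mathbb{C})$-invariant, so the group preserves each symplectic leaf, while the target $\mathfrak{gl}_{n}(\mathbb{C})^{*}$ of the moment map does not involve the deformation parameter $\mathbf{c}$. Hence the action restricts to a genuine Hamiltonian action on each leaf $\bar{\theta}^{-1}(\mathbf{c})$, with moment map the restriction of $\mu_{\mathbf{H},\prod_{a\in |D|}\mathbb{D}(H_{a})}^{s}$, and the zero level set $N:=(\mu_{\mathbf{H},\prod_{a\in |D|}\mathbb{D}(H_{a})}^{s})^{-1}(0)$ splits as the disjoint union over $\mathbf{c}$ of the leafwise zero level sets $N\cap\bar{\theta}^{-1}(\mathbf{c})$.

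With these in place I would invoke Poisson reduction at $0$, taking the quotient of $N$ by $\mathrm{GL}_{n}(\mathbb{C})$. The smoothness of both $N$ and of the quotient is furnished by the two cleanness statements announced just before the theorem, together with irreducibility: on the irreducible locus every stabiliser is the centre $\mathbb{C}^{\times}E_{n}$, which acts trivially on $\mathfrak{gl}_{n}(\mathbb{C})^{*}$, so the induced $\mathrm{PGL}_{n}(\mathbb{C})$-action is free and proper and $0$ is a clean value of the moment map. The reduction then endows $\mathrm{GL}_{n}(\mathbb{C})\backslash N=\mathcal{M}_{s}^{*}(\mathbf{H})_{\prod_{a\in |D|}\mathbb{D}(H_{a})}$ with a canonical Poisson structure, proving the first assertion. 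Since the Casimirs pulled back through $\bar{\theta}$ descend to the $\theta$-pullbacks of functions on the base, these remain Casimirs after reduction, so the characteristic distribution of the reduced Poisson tensor is automatically tangent to the fibres of $\theta_{\prod_{a\in |D|}\mathbb{D}(H_{a})}$; thus every symplectic leaf is contained in a single fibre $\theta^{-1}(\mathbf{c})$.

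It remains to prove the reverse inclusion, that each fibre $\theta^{-1}(\mathbf{c})$ is a \emph{single} symplectic leaf. Because the $\mathrm{GL}_{n}(\mathbb{C})$-orbits lie inside the leaves and $N$ meets each leaf cleanly, reducing the leaf $\bar{\theta}^{-1}(\mathbf{c})$ by its induced Hamiltonian action (the ordinary symplectic reduction used already in Theorem \ref{thm:boalch}) produces $\mathrm{GL}_{n}(\mathbb{C})\backslash(N\cap\bar{\theta}^{-1}(\mathbf{c}))=\theta^{-1}(\mathbf{c})$, which is therefore a symplectic manifold. I expect the main obstacle to lie precisely in the compatibility of these two reductions: one must check that the symplectic form obtained by inverting the restricted reduced Poisson bivector on $\theta^{-1}(\mathbf{c})$ agrees with the form coming from the symplectic reduction of the leaf, and that the leaf dimension does not jump with $\mathbf{c}$. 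Concretely this amounts to verifying that the two clean-intersection hypotheses interlock—so that a single dimension count, uniform in $\mathbf{c}$, shows the rank of the reduced bivector along $\theta^{-1}(\mathbf{c})$ equals $\dim\theta^{-1}(\mathbf{c})$—and it is this interlocking that lets the individual symplectic reductions assemble into the global symplectic foliation $(\theta_{\prod_{a\in |D|}\mathbb{D}(H_{a})}^{-1}(\mathbf{c}))_{\mathbf{c}}$.
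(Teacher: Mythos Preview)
Your proposal is correct and follows essentially the same route as the paper. The paper itself gives no detailed proof here---it is an announcement, and the entire argument is compressed into the single sentence preceding the theorem: the zero level set $(\mu_{\mathbf{H},\prod_{a\in |D|}\mathbb{D}(H_{a})}^{s})^{-1}(0)$ has clean intersection with both the symplectic leaves $\bar{\theta}^{-1}(\mathbf{c})$ and the $\mathrm{GL}_{n}(\mathbb{C})$-orbits, whence the Poisson reduction inherits the fibrewise symplectic foliation; your write-up simply unpacks this into the Casimir argument for one inclusion and the leafwise symplectic reduction for the other, which is exactly the intended mechanism.
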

	Let us define a subspace 
	\begin{align*}
		\mathcal{M}_{s}^{*}(\mathbf{H})_{\prod_{a\in |D|}\mathbb{D}(H_{a})}^{(\mathcal{I}_{a})_{a
		\in |D|}}&:=\prod_{a\in |D|}\bigcup_{\mathbf{c}_{a}\in \mathbb{D}(H)\cap C(\mathcal{I}_{a})}\theta_{\mathbb{D}(H_{a})}^{-1}(\mathbf{c}_{a})\\
		&=\mathcal{M}_{s}^{*}(\mathbf{H})_{\prod_{a\in |D|}\mathbb{D}(H_{a})}\cap 
		\prod_{a\in |D|}(\mathbb{O}_{H_{a}})_{\mathbb{D}(H_{a})}^{\mathcal{I}_{a}}
	\end{align*}
	for each $(\mathcal{I}_{a})_{a
	\in |D|}\in \prod_{a\in |D|}\mathcal{P}_{k_{a}+1}$.
	Then we obtain a stratification 
	\[
		\mathcal{M}_{s}^{*}(\mathbf{H})_{\prod_{a\in |D|}\mathbb{D}(H_{a})}=
		\bigsqcup_{(\mathcal{I}_{a})_{a
		\in |D|}\in \prod_{a\in |D|}\mathcal{P}_{k_{a}+1}}\mathcal{M}_{s}^{*}(\mathbf{H})_{\prod_{a\in |D|}\mathbb{D}(H_{a})}^{(\mathcal{I}_{a})_{a
		\in |D|}}.
	\]
	
	Let us fix a collection $(\mathcal{I}_{a})_{a\in |D|}$ of partitions and 
	consider the corresponding stratum $\mathcal{M}_{s}^{*}(\mathbf{H})_{\prod_{a\in |D|}\mathbb{D}(H_{a})}^{(\mathcal{I}_{a})_{a
	\in |D|}}$.
	Then the fibers $\theta_{\prod_{a\in |D|}\mathbb{D}(H_{a})}^{-1}(\mathbf{c})$ contained in this stratum
	are of the forms $\theta_{\prod_{a\in |D|}\mathbb{D}(H_{a})}^{-1}((\iota_{\mathcal{I}_{a}}(\mathsf{c}_{a}))_{a\in |D|}))$
	with some $(\mathsf{c}_{a})_{a\in |D|}\in \prod_{a\in |D|}\mathbb{C}^{r_{\mathcal{I}_{a}}+1}$
	We associate the collection of HTL normal forms 
	\[
		\mathbf{H}((\mathsf{c}_{a})_{a\in |D|})^{(\mathcal{I}_{a})_{a\in |D|}}:=
		(H_{\mathcal{I}_{a}}(\mathsf{c}_{a})_{z_{c^{[a]}_{j}}})_{a\in |D|,\,0\le j\le r_{\mathcal{I}_{a}}}
	\]
	to the fiber $\theta_{\prod_{a\in |D|}\mathbb{D}(H_{a})}^{-1}((\iota_{\mathcal{I}_{a}}(\mathsf{c}_{a}))_{a\in |D|}))$.
	Then as a corollary of Theorem \ref{thm:fiber}, we obtain the second main theorem which 
	says that each symplectic leaf $\theta_{\prod_{a\in |D|}\mathbb{D}(H_{a})}^{-1}(\mathbf{c})$ of the unfolding manifold
	is embedded into a Zariski open subset of a moduli space of connections, namely, 
	the symplectic leaves describe a deformation of the moduli space of connections and moreover this deformation 
	corresponds to the unfolding of the irregular singularities of the associated HTL normal forms to the moduli space.
	\begin{thm}[\cite{H}]\label{thm:main2}
		In a stratum $\mathcal{M}_{s}^{*}(\mathbf{H})_{\prod_{a\in |D|}\mathbb{D}(H_{a})}^{(\mathcal{I}_{a})_{a
		\in |D|}}$ of the unfolding manifold associated to $(\mathcal{I}_{a})_{a\in |D|}$,
		each fiber $\theta_{\prod_{a\in |D|}\mathbb{D}(H_{a})}^{-1}((\iota_{\mathcal{I}_{a}}(\mathsf{c}_{a}))_{a\in |D|}))$
		is isomorphic to a Zariski open subspace of 
		the moduli space $\mathcal{M}_{s}^{*}(\mathbf{H}((\mathsf{c}_{a})_{a\in |D|})^{(\mathcal{I}_{a})_{a\in |D|}})$
		of meromorphic connections on $\mathbb{P}^{1}$.
	\end{thm}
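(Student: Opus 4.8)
The plan is to realize the fiber $\theta_{\prod_{a\in |D|}\mathbb{D}(H_{a})}^{-1}((\iota_{\mathcal{I}_{a}}(\mathsf{c}_{a}))_{a\in |D|})$ as a $\mathrm{GL}_{n}(\mathbb{C})$-symplectic quotient and to transport it, via the partial fraction embeddings of Theorem \ref{thm:fiber}, onto a Zariski open subspace of the defining symplectic quotient of $\mathcal{M}_{s}^{*}(\mathbf{H}')$, where we abbreviate $\mathbf{H}':=\mathbf{H}((\mathsf{c}_{a})_{a\in |D|})^{(\mathcal{I}_{a})_{a\in |D|}}$. First I would observe that, since $\bar{\theta}_{\prod_{a\in |D|}\mathbb{D}(H_{a})}$ is $\mathrm{GL}_{n}(\mathbb{C})$-invariant and $\mu_{\mathbf{H},\prod_{a\in |D|}\mathbb{D}(H_{a})}^{s}=\sum_{a}\mu^{0}_{(\mathbb{O}_{H_{a}})_{\mathbb{D}(H_{a})}}$ is fibered over $\prod_{a}\mathbb{D}(H_{a})$, the fiber in question is the quotient
\[
	\mathrm{GL}_{n}(\mathbb{C})\backslash\left((\mu_{\mathbf{H},\prod_{a\in |D|}\mathbb{D}(H_{a})}^{s})^{-1}(0)\cap \prod_{a\in |D|}\theta_{\mathbb{D}(H_{a})}^{-1}(\iota_{\mathcal{I}_{a}}(\mathsf{c}_{a}))\right).
\]
Taking the product of the embeddings of Theorem \ref{thm:fiber}(2), I obtain a holomorphic symplectic open embedding
\[
	\Phi:=\prod_{a\in |D|}\varphi_{\iota_{\mathcal{I}_{a}}(\mathsf{c}_{a})}\colon \prod_{a\in |D|}\theta_{\mathbb{D}(H_{a})}^{-1}(\iota_{\mathcal{I}_{a}}(\mathsf{c}_{a}))\hookrightarrow \prod_{a\in |D|}\prod_{j=0}^{r_{\mathcal{I}_{a}}}\mathbb{O}_{H_{\mathcal{I}_{a}}(\mathsf{c}_{a})_{z_{c_{j}}}}
\]
onto a Zariski open dense subset, whose target is exactly the product of truncated orbits defining $\mathcal{M}_{s}^{*}(\mathbf{H}')$.

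The heart of the argument, and the step I expect to be the main obstacle, is to check that $\Phi$ is $\mathrm{GL}_{n}(\mathbb{C})$-equivariant and intertwines the two moment maps, i.e.\ that for every $a\in |D|$
\[
	\mu^{0}_{(\mathbb{O}_{H_{a}})_{\mathbb{D}(H_{a})}}\big|_{\theta_{\mathbb{D}(H_{a})}^{-1}(\iota_{\mathcal{I}_{a}}(\mathsf{c}_{a}))}=\left(\sum_{j=0}^{r_{\mathcal{I}_{a}}}\mu^{0}_{\mathbb{O}_{H_{\mathcal{I}_{a}}(\mathsf{c}_{a})_{z_{c_{j}}}}}\right)\circ \varphi_{\iota_{\mathcal{I}_{a}}(\mathsf{c}_{a})}.
\]
I would prove this by unwinding the definition of $\mu^{0}_{(\mathbb{O}_{H_{a}})_{\mathbb{D}(H_{a})}}=-\nu_{(\mathbb{O}_{H_{a}})_{\mathbb{D}(H_{a})}}$ together with Proposition \ref{prop:red}: both sides compute the total residue over $\mathbb{C}$ of the underlying rational connection matrix attached to $a$, the left-hand side intrinsically through $\nu_{(\mathbb{O}_{H_{a}})_{\mathbb{D}(H_{a})}}$ and the right-hand side as the sum $\sum_{j}\mu^{0}_{\mathbb{O}_{H_{\mathcal{I}_{a}}(\mathsf{c}_{a})_{z_{c_{j}}}}}$ of the local residues at the unfolded poles $c_{0},\ldots,c_{r_{\mathcal{I}_{a}}}$. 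The equality then reduces to the compatibility of the partial fraction decomposition with the module structures and the residue pairing established in Section \ref{sec:pfd} (the commutative diagram relating the $\widehat{\mathbb{C}[z]}(\mathbf{c})_{l}$-module $\widehat{\varOmega}(\mathbf{c})_{l}$ to $\bigoplus_{i}\mathbb{C}[(z-c_{i})^{-1}]_{k_{i}^{(l)}}$), applied fiberwise through the right trivialization $(\ref{eq:rigthtrivbundle})$. The $\mathrm{GL}_{n}(\mathbb{C})$-equivariance is clear because the global conjugation action commutes with the partial fraction decomposition, which merely separates the poles of a fixed connection matrix.

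Granting the intertwining, I would finish as follows. Because $\Phi$ is $\mathrm{GL}_{n}(\mathbb{C})$-equivariant and matches moment maps, it restricts to a $\mathrm{GL}_{n}(\mathbb{C})$-equivariant open embedding of $(\mu_{\mathbf{H},\prod_{a\in |D|}\mathbb{D}(H_{a})}^{s})^{-1}(0)\cap \prod_{a}\theta_{\mathbb{D}(H_{a})}^{-1}(\iota_{\mathcal{I}_{a}}(\mathsf{c}_{a}))$ onto a $\mathrm{GL}_{n}(\mathbb{C})$-invariant Zariski open subset of $(\mu_{\mathbf{H}'}^{s})^{-1}(0)$; here irreducibility is preserved since $\varphi$ only reorganizes the same coefficient matrices of the connection by their poles, so the simultaneous invariant subspaces coincide. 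Passing to the $\mathrm{GL}_{n}(\mathbb{C})$-quotients, which are geometric on the irreducible loci where the action is free, yields an isomorphism of the fiber onto a Zariski open subspace of $\mathcal{M}_{s}^{*}(\mathbf{H}')$, as claimed; both sides carry the reduced symplectic structure, so the isomorphism is automatically symplectic in the sense of the symplectic foliation of Theorem \ref{thm:main1}.
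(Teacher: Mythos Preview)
Your proposal is correct and follows exactly the route the paper indicates: the text preceding Theorem~\ref{thm:main2} states that it is obtained ``as a corollary of Theorem~\ref{thm:fiber}'', and your argument is precisely the natural unpacking of that corollary---taking the product of the local embeddings $\varphi_{\iota_{\mathcal{I}_{a}}(\mathsf{c}_{a})}$, checking $\mathrm{GL}_{n}(\mathbb{C})$-equivariance and compatibility with the moment maps via the residue/partial-fraction compatibility of Section~\ref{sec:pfd}, and then descending to the symplectic quotients. Since this note is an announcement and defers the detailed proof to \cite{H}, there is nothing further to compare; your sketch is the intended argument.
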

	
	\subsubsection{Example: unfolding manifold of unramified Painlev\'e phase spaces}
	Let us consider the HTL normal form 
	\[
		H_{I\!I}:=\left(\frac{\begin{pmatrix}
			a_{3}& \\ &b_{3}
		\end{pmatrix}}{z^{3}}	
		+
		\frac{\begin{pmatrix}
			a_{2}&\\&b_{2}
		\end{pmatrix}}{z^{2}}
		+\frac{\begin{pmatrix}
			a_{1}&\\&b_{1}
		\end{pmatrix}}{z^{1}}
		+
		\begin{pmatrix}
			a_{0}&\\&b_{0}
		\end{pmatrix}			
		\right)
		\frac{dz}{z}
	\]
	Then it is known that $\mathcal{M}_{s}^{*}(H_{I\!I})$ is the phase space of 
	the Hamiltonian system of the Painlev\'e $I\!I$ equation.
	The unfolding of $H_{I\!I}$ is 
	\begin{align*}
		&H_{I\!I}(c_{0},c_{1},c_{2},c_{3})=\\
		&\left(\frac{\begin{pmatrix}
			a_{3}& \\ &b_{3}
		\end{pmatrix}}{(z-c_{1})(z-c_{2})(z-c_{3})}	
		+
		\frac{\begin{pmatrix}
			a_{2}&\\&b_{2}
		\end{pmatrix}}{(z-c_{1})(z-c_{2})}
		+\frac{\begin{pmatrix}
			a_{1}&\\&b_{1}
		\end{pmatrix}}{(z-c_{1})}
		+
		\begin{pmatrix}
			a_{0}&\\&b_{0}
		\end{pmatrix}			
		\right)
		\frac{dz}{z-c_{0}}.
	\end{align*}
	The unfolding parameter space $\mathbb{C}^{4}$ has the following $5$ types of strata
	\begin{align*}
		C_{I\!I}&:=\{(c_{i})_{i=0,\ldots,3}\in \mathbb{C}^{4}\mid c_{0}=c_{1}=c_{2}=c_{3}\},\\
		C_{I\!I\!I}^{(i,j|k,l)}&:=\{(c_{i})_{i=0,\ldots,3}\in \mathbb{C}^{4}\mid c_{i}=c_{j},\,c_{k}=c_{l},\,c_{i}\neq c_{k}\},\\
		C_{I\!V}^{(i|j,k,l)}&:=\{(c_{i})_{i=0,\ldots,3}\in \mathbb{C}^{4}\mid c_{j}=c_{k}=c_{l},\, c_{i}\neq c_{j}\},\\
		C_{V}^{(i|j|k,l)}&:=\{(c_{i})_{i=0,\ldots,3}\in \mathbb{C}^{4}\mid c_{k}=c_{l},\,c_{i}\neq c_{j},\,c_{i}\neq c_{k},\,c_{j}\neq c_{k} \},\\
		C_{V\!I}^{(i|j|k|l)}&:=\{(c_{i})_{i=0,\ldots,3}\in \mathbb{C}^{4}\mid c_{s}\neq c_{t},\,s\neq t \},
	\end{align*}
	where $\{i,j,k,l\}=\{0,1,2,3\}$.
	The diagram below is the Hasse diagram of 
	the closure relation of the strata $C_{I\!I}, C_{I\!I\!I}^{(0,1|2,3)}, C_{I\!V}^{(0|1,2,3)},
	C_{V}^{(0|1|2,3)}, C_{V\!I}^{(0|1|2|3)}$.
	\[
		\begin{tikzpicture}
			\node (s) {$C_{V\!I}^{(0|1|2|3)}$};
			\node[below =of s] (a) {$C_{V}^{(0|1|2,3)}$};
			\node[left=1cm of a] (a1) {};
			\node[right=1cm of a] (a2) {};
			\node[below=1cm of a1] (b) {$C_{I\!V}^{(0|1,2,3)}$};
			\node[below=1cm of a2] (c) {$C_{I\!I\!I}^{(0,1|2,3)}$};
			\node[below =2cm of a] (d) {$C_{I\!I}$};
			\draw[-] (s) -- (a);
			\draw[-] (a) -- (b);
			\draw[-] (a) -- (c);
			\draw[-] (c) -- (d);
			\draw[-] (b) -- (d);
		\end{tikzpicture}
	\]
	
	If $\mathbf{c}=(c_{0},\ldots,c_{3})\in C_{I\!I\!I}^{(i,j|k,l)}\cap \mathbb{D}(H_{I\!I})$ for example, 
	there exist holomorphic functions $a^{(\mu)}_{\nu}(\mathbf{c})
	b^{(\mu)}_{\nu}(\mathbf{c})$ of $\mathbf{c}\in C_{I\!I\!I}^{(i,j|k,l)}\cap \mathbb{D}(H_{I\!I})$
	such that 
	we can write 
	$H_{I\!I}(c_{0},c_{1},c_{2},c_{3})=H^{(i,j|k,l)}_{I\!I\!I,\,c_{i}}+H^{(i,j|k,l)}_{I\!I\!I,\,c_{k}}$
	by 
	\begin{align*}
		H^{(i,j|k,l)}_{I\!I\!I,\,c_{i}}&:=\left(
			\frac{\begin{pmatrix}
				a^{(i)}_{1}(\mathbf{c})&\\
				&b^{(i)}_{1}(\mathbf{c})
			\end{pmatrix}}{(z-c_{i})}
			+\begin{pmatrix}
				a^{(i)}_{0}(\mathbf{c})&\\
				&b^{(i)}_{0}(\mathbf{c})
			\end{pmatrix}
		\right)\frac{d}{z-c_{i}},\\
		H^{(i,j|k,l)}_{I\!I\!I,\,c_{k}}&:=\left(
			\frac{\begin{pmatrix}
				a^{(k)}_{1}(\mathbf{c})&\\
				&b^{(k)}_{1}(\mathbf{c})
			\end{pmatrix}}{(z-c_{k})}
			+\begin{pmatrix}
				a^{(k)}_{0}(\mathbf{c})&\\
				&b^{(k)}_{0}(\mathbf{c})
			\end{pmatrix}
		\right)\frac{d}{z-c_{k}}.
	\end{align*}
	The condition $\mathbf{c}\in C_{I\!I\!I}^{(i,j|k,l)}\cap \mathbb{D}(H_{I\!I})$
	moreover assures that the moduli space $\mathcal{M}_{s}^{*}(\mathbf{H}_{I\!I\!I})$
	associated to the collection $\mathbf{H}_{I\!I\!I}:=(H^{(i,j|k,l)}_{I\!I\!I,\,c_{i}}, H^{(i,j|k,l)}_{I\!I\!I,\,c_{k}})$
	is isomorphic to the phase space of Painlev\'e $I\!I\!I$ equation. 
	In this case, our main theorem says the following.
	\begin{prp} 
		For each $\mathbf{c}\in C_{I\!I\!I}^{(i,j|k,l)}\cap \mathbb{D}(H_{I\!I})$, the 
		fiber $\theta_{\mathbb{D}(H_{I\!I})}^{-1}(\mathbf{c})$ of 
	the unfolding manifold $\theta_{\mathbb{D}(H_{I\!I})}\colon 
	\mathcal{M}_{s}^{*}(H_{I\!I})_{\mathbb{D}(H_{I\!I})}\rightarrow \mathbb{D}(H_{I\!I})$
	is isomorphic to a Zariski open subset of $\mathcal{M}_{s}^{*}(\mathbf{H}_{I\!I\!I})$,
	the phase space of Painlev\'e $I\!I\!I$ equation.
	\end{prp}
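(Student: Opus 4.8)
The plan is to read the statement as the specialization of Theorem \ref{thm:main2} to the case where $|D|$ is a single point, so that $\prod_{a\in |D|}\mathbb{D}(H_a)$ reduces to $\mathbb{D}(H_{I\!I})$, and to the partition of $\{0,1,2,3\}$ determined by the stratum $C_{I\!I\!I}^{(i,j|k,l)}$. First I would record that, in the notation of Section \ref{sec:strata}, one has $C_{I\!I\!I}^{(i,j|k,l)}=C(\mathcal{I})$ for the partition $\mathcal{I}\colon I_0,I_1$ with blocks $\{i,j\}$ and $\{k,l\}$, reindexed so that $0\in I_0$. Thus every $\mathbf{c}\in C_{I\!I\!I}^{(i,j|k,l)}$ is of the form $\mathbf{c}=\iota_{\mathcal{I}}(\mathsf{c})$ for a unique $\mathsf{c}=(\alpha,\beta)\in C_2(\mathbb{C})$ with $\alpha=c_i=c_j$ and $\beta=c_k=c_l$, and $\theta_{\mathbb{D}(H_{I\!I})}^{-1}(\mathbf{c})$ is exactly the fibre to which Theorem \ref{thm:fiber} and Theorem \ref{thm:main2} apply.

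Next I would carry out the partial fraction decomposition of $H_{I\!I}(\mathbf{c})$ on this stratum explicitly. Writing $S_\nu:=\mathrm{diag}(a_\nu,b_\nu)$ and taking the representative $(i,j|k,l)=(0,1|2,3)$, the unfolding collapses to
\[
	H_{I\!I}(\mathbf{c})=\left(\frac{S_{3}}{(z-\alpha)^{2}(z-\beta)^{2}}+\frac{S_{2}}{(z-\alpha)^{2}(z-\beta)}+\frac{S_{1}}{(z-\alpha)^{2}}+\frac{S_{0}}{z-\alpha}\right)dz,
\]
and the partial fraction algorithm splits it as $H^{(i,j|k,l)}_{I\!I\!I,\,c_i}+H^{(i,j|k,l)}_{I\!I\!I,\,c_k}$, exhibiting the coefficients $a^{(\mu)}_{\nu}(\mathbf{c}),b^{(\mu)}_{\nu}(\mathbf{c})$ of the statement as rational, hence on the stratum holomorphic, functions of $\mathbf{c}$. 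For instance the leading diagonal coefficient at $\alpha$ is $\mathrm{diag}(a^{(i)}_1(\mathbf{c}),b^{(i)}_1(\mathbf{c}))=S_3/(\alpha-\beta)^{2}+S_2/(\alpha-\beta)+S_1$. By Proposition \ref{prop:spectral type}, for $\mathbf{c}\in \mathbb{D}(H_{I\!I})$ each summand is a genuine unramified HTL normal form of rank-one irregular type, with spectral chain the subsequence of $\mathbf{m}_0\le\mathbf{m}_1\le\mathbf{m}_2\le\mathbf{m}_3$ indexed by $I_0$ (resp.\ $I_1$) and trivial signature; concretely the relevant defining inequality of $\mathbb{D}(H_{I\!I})$ from Section \ref{sec:openset} reads here $a^{(i)}_1(\mathbf{c})\ne b^{(i)}_1(\mathbf{c})$ together with its analogue at $c_k$, which is precisely the condition that the two confluence points stay regular-semisimple rank-one irregular singularities rather than degenerate to regular ones. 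This gives the collection $\mathbf{H}_{I\!I\!I}=(H^{(i,j|k,l)}_{I\!I\!I,\,c_i},H^{(i,j|k,l)}_{I\!I\!I,\,c_k})$; the general $(i,j|k,l)$ is identical after relabelling.

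With the local data in hand I would apply Theorem \ref{thm:main2}: the fibre $\theta_{\mathbb{D}(H_{I\!I})}^{-1}(\iota_{\mathcal{I}}(\mathsf{c}))$ is isomorphic, via the symplectic embedding $\varphi_{\iota_{\mathcal{I}}(\mathsf{c})}$ of Theorem \ref{thm:fiber}, to a Zariski open dense subset of $\mathcal{M}_s^*(\mathbf{H}_{I\!I\!I})$. It then remains only to identify $\mathcal{M}_s^*(\mathbf{H}_{I\!I\!I})$ with the Painlev\'e $I\!I\!I$ phase space: since $\mathbf{H}_{I\!I\!I}$ consists of two regular-semisimple rank-one irregular singularities on the rank-two trivial bundle over $\mathbb{P}^1$, this is exactly the $2\times 2$ Lax data whose isomonodromic deformation is the Painlev\'e $I\!I\!I$ equation, and the identification is the cited classical fact.

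I expect the main obstacle to be not the formal invocation of Theorem \ref{thm:main2} but the bookkeeping that licenses it: checking that the partial fraction decomposition genuinely yields the two advertised HTL normal forms with holomorphically varying coefficients, and that $\mathbf{c}\in C_{I\!I\!I}^{(i,j|k,l)}\cap\mathbb{D}(H_{I\!I})$ is exactly the nondegeneracy guaranteeing their spectral types are the Painlev\'e $I\!I\!I$ data. Matching $\mathcal{M}_s^*(\mathbf{H}_{I\!I\!I})$ to the named phase space then rests on the known dictionary between moduli of meromorphic connections with such spectral data and the Painlev\'e equations, which I would cite rather than reprove.
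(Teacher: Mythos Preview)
Your proposal is correct and takes essentially the same approach as the paper: the proposition is stated there explicitly as a direct instance of Theorem~\ref{thm:main2} (the text preceding it reads ``In this case, our main theorem says the following''), with no further proof given. You have simply unpacked the invocation---identifying $C_{I\!I\!I}^{(i,j|k,l)}$ with the stratum $C(\mathcal{I})$ for the two-block partition, reading off the partial fraction decomposition, and citing Proposition~\ref{prop:spectral type} and Theorem~\ref{thm:main2}---which is more detail than the paper supplies but exactly the intended argument.
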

	The same statement holds for each above  5 strata, namely, 
	we have the following.
	\begin{thm}
		The symplectic foliation of $\mathcal{M}_{s}^{*}(H_{I\!I})_{\mathbb{D}(H_{I\!I})}$
	consists of  Zariski open subsets of Painlev\'e $I\!I$ to $V\!I$ phase spaces, and 
	the types $I\!I$ to $V\!I$ of leaves $\theta_{\mathbb{D}(H_{I\!I})}^{-1}(\mathbf{c})$
	 correspond to the strata $C_{I\!I}$ to $C_{V\!I}^{(i|j|k|l)}$ containing 
	 the deformation parameter $\mathbf{c}$.
	\end{thm}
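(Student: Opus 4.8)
The plan is to deduce the statement from Theorem \ref{thm:main2} by carrying out, stratum by stratum, the explicit partial fraction decomposition of the unfolded normal form and then identifying each resulting moduli space of connections with the linear isomonodromy description of a Painlev\'e phase space. Since Theorem \ref{thm:main1} already guarantees that the fibers $\theta_{\mathbb{D}(H_{I\!I})}^{-1}(\mathbf{c})$ form the symplectic foliation, every leaf is automatically a two-dimensional holomorphic symplectic manifold, so the only real content is the recognition of these leaves.

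First I would record the pole configuration of $H_{I\!I}(c_0,c_1,c_2,c_3)$ on each stratum. Reading the orders of the poles of the rational $1$-form directly from Definition \ref{def:unfold}, the point $z=c_j$ carries a pole whose order equals the number of indices among $\{0,1,2,3\}$ collapsing to $c_j$, i.e.\ the size $|I_m|$ of the corresponding block of the partition $\mathcal{I}$ defining the stratum; a pole of order $|I_m|$ is an irregular singularity of Poincar\'e rank $|I_m|-1$, and is regular when $|I_m|=1$. Thus on $C_{V\!I}^{(i|j|k|l)}$ one finds four simple poles (a Fuchsian system with four regular singular points); on $C_{V}^{(i|j|k,l)}$ orders $1,1,2$ (two regular and one irregular of rank $1$); on $C_{I\!V}^{(i|j,k,l)}$ orders $1,3$ (one regular and one irregular of rank $2$); on $C_{I\!I\!I}^{(i,j|k,l)}$ orders $2,2$ (two irregular singularities of rank $1$); and on $C_{I\!I}$ the single pole of order $4$ of the original $H_{I\!I}$ (one irregular point of rank $3$). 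By Proposition \ref{prop:spectral type}, whenever the parameter lies in $\mathbb{D}(H_{I\!I})$ each local piece $H_{\mathcal{I}}(\mathsf{c})_{z_{c_j}}$ is a genuine unramified HTL normal form whose spectral type is the subsequence of $\mathrm{sp}(H_{I\!I})$ indexed by the block $I_m$, and these spectral types depend only on the stratum, not on $\mathsf{c}$.

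Next I would apply Theorem \ref{thm:main2}: for $\mathbf{c}\in \mathbb{D}(H_{I\!I})$ in a given stratum the fiber $\theta_{\mathbb{D}(H_{I\!I})}^{-1}(\mathbf{c})$ is isomorphic to a Zariski open dense subset of the moduli space $\mathcal{M}_s^*(\mathbf{H}_?)$ of rank-$2$ meromorphic connections on $\mathbb{P}^{1}$ with the collection of unfolded HTL normal forms computed above. It then remains to match each $\mathcal{M}_s^*(\mathbf{H}_?)$ with a known phase space: the five singularity configurations just found --- four regular points, one rank-$1$ irregular plus two regular, one rank-$2$ irregular plus one regular, two rank-$1$ irregular, and one rank-$3$ irregular --- are precisely the data of the linear problems for Painlev\'e $V\!I, V, I\!V, I\!I\!I, I\!I$ in the classification of Jimbo--Miwa and of Boalch. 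The case $C_{I\!I\!I}^{(i,j|k,l)}$ is exactly the Proposition established just above, and the other four strata are handled identically. Finally I would observe that the Hasse diagram of the closure relations among the strata reproduces the classical confluence cascade $V\!I \to V \to \{I\!V, I\!I\!I\} \to I\!I$, so that the assignment of leaf types to strata is compatible with the degeneration structure of the foliation.

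The main obstacle is not the formal invocation of Theorem \ref{thm:main2} but the verification that the generic locus $\mathbb{D}(H_{I\!I})$ is chosen so that the unfolded exponent data stay nonresonant and the connections remain irreducible, so that each $\mathcal{M}_s^*(\mathbf{H}_?)$ is the full Painlev\'e phase space rather than a proper degeneration or the empty space. Concretely, I expect the hard step to be checking that the defining inequalities of $\mathbb{D}(H_{I\!I})$ in Section \ref{sec:openset}, which separate the eigenvalues of the $S_l$ in the relevant confluent combinations, translate exactly into the distinctness of the local exponents and the genericity of the formal/Stokes data required for the standard moduli-theoretic realization of each of Painlev\'e $I\!I$ through $V\!I$.
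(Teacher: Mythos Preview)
Your proposal is correct and follows essentially the same route as the paper. The paper itself does not supply a proof of this theorem: it simply asserts that ``the same statement holds for each above 5 strata,'' immediately after the Painlev\'e $I\!I\!I$ proposition, so the intended argument is precisely what you outline---apply Theorem~\ref{thm:main1} for the symplectic foliation, then Theorem~\ref{thm:main2} stratum by stratum, and finally recognize the five singularity configurations $(4),(2,2),(3,1),(2,1,1),(1,1,1,1)$ as the standard linear data for Painlev\'e $I\!I$ through $V\!I$ via the Jimbo--Miwa--Ueno/Boalch classification, with the $I\!I\!I$ case already spelled out in the preceding Proposition. Your closing caveat about verifying that $\mathbb{D}(H_{I\!I})$ forces the local exponents to be distinct and the moduli spaces to be the genuine Painlev\'e phase spaces is a fair point of rigor, but this is exactly the content of Proposition~\ref{prop:spectral type} together with the cited ``well-known'' identifications, and the paper does not elaborate further either.
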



\begin{thebibliography}{99}
	%
	%
	\bibitem{Boa} Boalch,~P., Symplectic manifolds and isomonodromic deformations, 
	\textit{Adv. Math.,} \textbf{163} (2001), no. 2, 137--205.
	\bibitem{CMR} Chekhov,~L.~O., Mazzocco,~M., Rubtsov,~V.~N.,
	Painlev\'e monodromy manifolds, decorated character varieties, and cluster algebras,
	\textit{Int. Math. Res. Not. IMRN,}
	(2017), no. 24, 7639--7691.
	\bibitem{GMR} Gaiur,~I., Mazzocco,~M. Rubtsov,~V.,
	Isomonodromic deformations: Confluence, Reduction \& Quantisation,
	arXiv:2106.13760.
	\bibitem{H} Hiroe,~K., 
	Unfolding manifold of meromorphic connections on the Riemann sphere and a deformation of 
	quiver varieties, in preparation.
	\bibitem{HKNS} Hiroe,~K., Kawakami,~H., Nakamura,~A., Sakai,~H.,
		4-dimensional Painlev\'e type equations, MSJ memoirs, \textbf{37},
		2018.
	\bibitem{HY} Hiroe,~K., Yamakawa,~D.,
	 Moduli spaces of meromorphic connections and quiver varieties, 
	\textit{Adv. Math.,} 
	\textbf{266} (2014), 120--151.  
	\bibitem{Huk1} Hukuhara,~M.
	Sur les points singuliers des \'Equations diff\'erentielles lin\'eaires. III, 
	\textit{Mem. Fac. Sci. Ky\=usy\=u Imp. Univ. A,}
	\textbf{2} (1942), 125--137. 
	\bibitem{Ina} Inaba,~M., 
	Unfolding of the unramified irregular singular generalized isomonodromic deformation,
	\textit{Bull. Sci. Math.,}
	\textbf{157} (2019), 102795, 121 pp.  
	\bibitem{JMU} Jimbo,~M., Miwa,~T., Ueno,~K., 
	Monodromy preserving deformation of linear ordinary differential equations with rational coefficients. I. General theory and $\tau$-function, 
	\textit{Phys. D,}
	\textbf{2} (1981), no. 2, 306--352.
	\bibitem{Lev} Levelt,~A.~H.~M., 
	Jordan decomposition for a class of singular differential operators, 
	\textit{Ark. Mat.,}
	\textbf{13} (1975), 1--27. 
	\bibitem{Mar}  Marsden,~J.~E., Misio\l ek,~G., Ortega,~J-P., Perlmutter,~M., Ratiu,~T.~S., 
	Hamiltonian reduction by stagesk
	\textit{Lecture Notes in Mathematics,} 1913. Springer, Berlin, 2007. 
	\bibitem{Osh2} Oshima,~T., 
	Versal unfolding of irregular singularities of a linear differential equation on the Riemann sphere, 
	\textit{Publ. Res. Inst. Math. Sci.,}
	\textbf{57} (2021), 893--920.
	\bibitem{Tur} Turrittin,~H.,
	Convergent solutions of ordinary linear homogeneous differential equations in the neighborhood of an irregular singular point, 
	\textit{Acta Math.,}
	\textbf{93} (1955), 27--66. 
	\bibitem{Yam1} Yamakawa,~D.,
	 Middle convolution and Harnad duality, 
	\textit{Math. Ann.,} 
	\textbf{349} (2011), no. 1, 215--262. 
	\bibitem{Yam2} Yamakawa,~D., 
	Fundamental two-forms for isomonodromic deformations,
	\textit{J. Integrable Syst.,}
	\textbf{4} (2019), no. 1, 1--35.
	\bibitem{Yam3} Yamakawa,~D., 
	Applications of Quiver varieties to moduli spaces of connections on $\mathbb{P}^{1}$, 
	\textit{Two Algebraic Byways from Differential Equations: Gr\"obner Bases and Quivers,}
	\textit{Algorithms and Computation in Mathematics,}
	 Springer, (2020), 325--371.
	\end{thebibliography}
	\end{document}